\g@addto@macro\bfseries{\boldmath}
\g@addto@macro\@floatboxreset{\centering}
\NewDocumentCommand{\myProcessList}{mm}{
  \clist_map_inline:nn{#1}{#2{##1}}
}
\newcounter{parcounter}
\def\paragraphtitle{\@ifstar{\@paragraphtitle}{\@@paragraphtitle}}
\def\@paragraphtitle#1{%
  \medskip\medskip%
  \stepcounter{parcounter}%
  \boldmath\textbf{\S\arabic{parcounter} #1}%
  \unboldmath%
}
\def\@@paragraphtitle#1{
  \medskip\medskip%
  \stepcounter{parcounter}%
  \phantomsection\addcontentsline{toc}{subsection}{\S\arabic{parcounter} #1}%
  \boldmath\textbf{\S\arabic{parcounter} #1.}%
  \unboldmath%
}
\newcommand{\parlabel}[1]{%
  \def\savecurrentlabel{\@currentlabel}%
  \def\@currentlabel{\arabic{parcounter}}%
  \label{#1}%
  \def\@currentlabel{\savecurrentlabel}%
}
\newcommand{\parref}[1]{\S\ref{#1}}
\newcommand{\doparref}[1]{%
 \stepcounter{curarg}%
 \ref{#1}%
 \ifthenelse{\value{curarg} < \value{numarg}}{,}{}%
}
\theoremstyle{definition}
\newtheorem{definition}{Definition}[parcounter]
\newtheorem{theorem}[definition]{Theorem}
\newtheorem*{theorem*}{Theorem}
\newtheorem{proposition}[definition]{Proposition}
\newtheorem{lemma}[definition]{Lemma}
\newtheorem{corollary}[definition]{Corollary}
\newcommand{\N}{\mathbb{N}}
\newcommand{\Z}{\mathbb{Z}}
\newcommand{\G}{\mathcal{G}}
\newcommand{\I}{\mathcal{I}}
\renewcommand{\P}{\ifmmode\mathcal{P}\else\textparagraph\fi}
\renewcommand{\S}{\ifmmode\mathcal{S}\else\textsection\fi}
\newcommand{\overbar}[1]{\overline{#1}}
\def\innerseqx(#1;#2,#3){\{#1\}_{#2=#3}^\infty}
\newcommand{\fr}[1]{\mathfrak{#1}}
\renewcommand{\bf}[1]{\mathbf{#1}}
\newcommand{\xqedhere}[2]{%
  \rlap{\hbox to#1{\hfill\llap{\ensuremath{#2}}}}%
}
\newcommand{\gen}[1]{\langle#1\rangle}
\newcommand{\code}[1]{\langle#1\rangle}
\newcommand{\Mod}[1]{\ \mathrm{mod}\ #1}
\newcounter{numarg}
\newcounter{curarg}
\newcommand{\countargs}{\stepcounter{numarg}\@gobble}
\newcommand{\docycle}[1]{%
 \stepcounter{curarg}%
 #1%
 \ifthenelse{\value{curarg} < \value{numarg}}{%
  \;\,%
 }{}
}
\DeclareDocumentCommand{\cycle}{m}{%
 \setcounter{numarg}{0}%
 \myProcessList{#1}{\countargs}%
 \setcounter{curarg}{0}%
 (\myProcessList{#1}{\docycle})%
}
\DeclareMathOperator{\Sym}{Sym}
\DeclareMathOperator{\id}{id}
\DeclareMathOperator{\rng}{rng}
\DeclareMathOperator{\succf}{succ}
\DeclareMathOperator{\Part}{Part}
\DeclareMathOperator{\Perm}{Perm}
\DeclareMathOperator{\Permcf}{Perm_{\textsc{CF}}}
\DeclareMathOperator{\Aut}{Aut}
\DeclareMathOperator{\dom}{dom}
\DeclareMathOperator{\Eqv}{Eqv}
\begin{document}

\title{On permutations with decidable cycles}
\author{Tobias Boege}
\address{Otto-von-Guericke-Universit\"at Magdeburg\\ Magdeburg, Germany}
\email{tboege@st.ovgu.de}
\subjclass[2010]{Primary: 03D45; Secondary: 20E45}
\keywords{Permutation, equivalence relation, partition, conjugacy, cycle
decidability, computability}
\date{\today}

\begin{abstract} Recursive permutations whose cycles are the classes of a
decidable equivalence relation are studied; the set of these permutations is
called $\Perm$, the group of all recursive permutations $\G$. Multiple
equivalent computable representations of decidable equivalence relations are
provided. $\G$-conjugacy in $\Perm$ is characterised by computable isomorphy
of cycle equivalence relations. This result parallels the equivalence of
cycle type equality and conjugacy in the full symmetric group of the natural
numbers.

Conditions are presented for a permutation $f \in \G$ to be in $\Perm$ and
for a decidable equivalence relation to appear as the cycle relation of a
member of $\G$. In particular, two normal forms for the cycle structure of
permutations are defined and it is shown that conjugacy to a permutation in
the first normal form is equivalent to membership in $\Perm$. $\Perm$ is
further characterised as the set of maximal permutations in a family of
preordered subsets of automorphism groups of decidable equivalences.

Conjugacy to a permutation in the second normal form corresponds to
decidable cycles plus decidable cycle finiteness problem. Cycle decidability
and cycle finiteness are both shown to have the maximal one-one degree of
the Halting Problem. Cycle finiteness is used to prove that conjugacy in
$\Perm$ cannot be decided and that it is impossible to compute cycle
deciders for products of members of $\Perm$ and finitary permutations. It
is also shown that $\Perm$ is not recursively enumerable and that it is not
a group.
\end{abstract}
\maketitle

\section{Introduction}
An equivalence relation over the natural numbers is \emph{decidable} if
there is a Turing machine which decides for every pair $(x, x')$ of numbers
whether they are related or not. The set of all recursive permutations of
the natural numbers is denoted $\G$ in this paper. We consider the subset
$\Perm$ in $\G$ of recursive permutations whose orbits, or \emph{cycles},
are the classes of a decidable equivalence relation. The present paper
studies algorithmic and algebraic questions about $\Perm$ which arise
naturally from this definition, which relates equivalence relations and
permutations.

Identify a permutation $f$ of $\N$ with the digraph on vertices $\N$ whose
arrows are given by the mapping $x \to f(x)$. A permutation is in $\G$ if
its digraph is locally explorable by a Turing-computable algorithm. For
the permutations in $\Perm$ a more global class of questions can be decided
in addition, namely for any pair of numbers whether they belong to the same
weakly connected component of the digraph. The weakly connected components
in the digraph view correspond to the cycles of the permutation and we use
these two terms synonymously.

The set $\Perm$ appears in an attempt to transfer a well-known theorem
from Group Theory to Recursion Theory. This theorem states that in a
symmetric group, such as $\Sym \N$, conjugacy is equivalent to cycle
type equality. Cycle type equality of two permutations is the condition
that for each countable cardinal, the both permutations have the same
number of cycles of size that cardinal. An attempt at this transfer has
been made by Kent in 1962:

\begin{theorem*}[{\cite[Thm.~1.7]{kent62}}]
\phantomsection\label{theorem:kent} In the group $\G$, a cycle type
class is also a conjugacy class iff it is the cycle type class of a
permutation with finitely many infinite cycles.
\end{theorem*}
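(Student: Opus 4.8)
The statement asserts two things, and the plan is to prove them separately. A cycle type class in $\G$ is always a union of $\G$-conjugacy classes, since conjugate recursive permutations have the same cycle type; so such a class is a single conjugacy class exactly when all recursive permutations of the given type $\tau$ are pairwise conjugate in $\G$, and I take $\tau$ to be realised by some member of $\G$ (otherwise the statement is vacuous).

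\emph{Finitely many infinite cycles $\Rightarrow$ single conjugacy class.} Let $\tau$ have $k<\infty$ infinite cycles and let $f,g\in\G$ have type $\tau$. The key observation is that, although the set $F_f\subseteq\N$ of points lying on a finite cycle of $f$ is in general only semidecidable (run the orbit and wait for it to close), here its complement $I_f$ is the union of the $k$ infinite cycles $C_1,\dots,C_k$ and satisfies: $x\in I_f$ iff $x=f^i(r_j)$ for some $i\in\Z$ and some $j\le k$, where $r_j=\min C_j$. Hence, treating the $r_j$ as hard-coded constants, $I_f$ is semidecidable too, so $F_f$ is decidable; the same holds for $g$ with leaders $s_j$. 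I would then build $h\in\G$ with $h\circ f=g\circ h$, its algorithm carrying the $2k$ leaders as constants, by gluing two effective bijections along $\N=F_f\sqcup I_f$. On the infinite part put $h(f^i(r_j))=g^i(s_j)$: given $x$, a search over $(i,j)$ locates the unique representing pair, so this is a recursive dynamics-preserving bijection $I_f\to I_g$. On the finite part, enumerate the finite cycles of $f$ and of $g$ (each found when its orbit closes) and, for every length $n$, match the $m$-th enumerated $n$-cycle of $f$ with the $m$-th enumerated $n$-cycle of $g$, sending a chosen base point of the one to a chosen base point of the other and propagating by the dynamics; as $f$ and $g$ have equally many $n$-cycles for every $n$, this is a recursive dynamics-preserving bijection $F_f\to F_g$. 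The combined map $h$ is recursive (on input $x$, run both procedures in parallel --- exactly one halts, according as $x\in F_f$ or $x\in I_f$), its inverse is obtained symmetrically, so $h\in\G$, and $h\circ f=g\circ h$ holds by construction; thus $f$ and $g$ are conjugate in $\G$.

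\emph{Infinitely many infinite cycles $\Rightarrow$ not a single conjugacy class.} It suffices to exhibit $f,g\in\G$ of type $\tau$ that are not conjugate in $\G$, and the invariant I would use is decidability of the cycle equivalence relation $E_f$ (with $x\,E_f\,y$ iff $y=f^i(x)$ for some $i\in\Z$), equivalently membership in $\Perm$: if $h\in\G$ and $h\circ f=g\circ h$ then $h$ carries cycles of $f$ bijectively onto cycles of $g$, so $x\,E_f\,y\iff h(x)\,E_g\,h(y)$ and a decision procedure for $E_g$ gives one for $E_f$; thus $\Perm$ is a union of $\G$-conjugacy classes. For $g$ I would take a standard permutation of type $\tau$ with $E_g$ decidable, obtained by laying out blocks of the prescribed finite sizes and $\aleph_0$ copies of $\Z$ on a computable partition of $\N$ --- the multiset of finite cycle sizes of $\tau$ is recursively enumerable because $\tau$ is realised in $\G$, which is all the layout needs. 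For $f$ I would take a permutation of type $\tau$ with $E_f$ undecidable, built in stages from a fixed recursively enumerable, non-recursive set $A$: reserve and never touch a computable region carrying the finite cycles standardly, and in the remaining region lay out countably many tracks $T_0,T_1,\dots$ together with a further countable family of tracks that is never modified, each track initially one $\Z$-cycle; fix a computable sequence of distinct points $p_n\in T_{2n}$, $q_n\in T_{2n+1}$, and, whenever $n$ enters $A$, splice $T_{2n}$ with $T_{2n+1}$ into a single bi-infinite cycle before any track is extended further. Every track stays finite at each stage, so the splicing is always possible; the limit $f$ is a recursive permutation, it still has infinitely many infinite cycles (the untouched family provides them) and hence type $\tau$, and $p_n\,E_f\,q_n\iff n\in A$. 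Since $\N\setminus A$ is not recursive, $E_f$ is not decidable while $E_g$ is, so $f$ and $g$ are not conjugate in $\G$, and the cycle type class of $\tau$ contains at least two conjugacy classes.

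\emph{The main obstacle.} The conceptual heart is the asymmetry just used: with finitely many infinite cycles one may hard-code the finitely many cycle leaders, which makes the finite/infinite split decidable and a conjugator computable, whereas with infinitely many infinite cycles the freedom in how the cycles are arranged is exactly what lets a non-recursive set be encoded into the cycle equivalence relation. I expect the most technical point to be checking that the staged construction in the second part produces a total recursive permutation of exactly type $\tau$ while achieving $p_n\,E_f\,q_n\iff n\in A$; the remaining work is effective bookkeeping and a routine matching argument.
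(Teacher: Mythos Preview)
The paper does not prove this theorem; it is quoted from Kent's 1962 paper and used as background, so there is no ``paper's own proof'' to compare against. Your outline is essentially a correct proof, and for the forward direction it is close to what the paper itself develops piecemeal: your decidability of the finite/infinite split via hard-coded leaders is exactly Proposition~\ref{prop:fininfdec}, and the matching argument is the idea behind Proposition~\ref{prop:permconj1} adapted to two permutations with the same type rather than the same partition.

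For the converse, your invariant---membership in $\Perm$---is precisely the paper's Corollary~\ref{coro:permnormal}, and the paper's Lemma~\ref{lemma:gminusperm} (via Kent's own Theorem~1.3 on creative cycles) supplies a witness outside $\Perm$ for the particular type with infinitely many infinite cycles and no finite ones. Your construction is more general because you need a witness outside $\Perm$ for \emph{every} such $\tau$, and your splicing-over-an-r.e.-set construction does achieve this. Two small points to tighten: first, the phrasing ``each track initially one $\Z$-cycle'' followed by ``every track stays finite at each stage'' is confusing---you mean each track is a finite path at every finite stage, growing toward a $\Z$-cycle in the limit unless spliced; say that. Second, laying out the finite cycles on a pre-reserved computable region and the infinite cycles on its complement only works when $\tau$ has infinitely many finite cycles; if there are finitely many, your region has leftover points with $g$ undefined. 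The fix is not to pre-partition: at stage $s$, start a new infinite cycle, extend all open infinite cycles by fresh points on both ends, and, if the finite-cycle enumerator produces a new size $n$ at this step, allocate $n$ fresh points for it. Since infinitely many infinite cycles are being grown, every natural number is eventually consumed, $g$ is total, and the cycle index of each $x$ is computable by running the construction until $x$ appears---so $g\in\Perm$. The same interleaving scheme should be used in the $f$-construction, with the splicing of $T_{2n}$ and $T_{2n+1}$ performed at the endpoints of the current finite paths the moment $n$ enters $A$, so that no already-defined value of $f$ is altered.
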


Define the set $\G_1$ to consist precisely of those recursive permutations
with finitely many infinite cycles. Then Kent's theorem states that in
$\G_1$ two permutations are $\G$-conjugate iff they have the same cycle
type and there is no proper superset $\G_1 \subsetneq \G' \subseteq \G$
which is closed under cycle types and where this theorem holds, too. This
formulation takes the same form as the one for full symmetric groups, but
one might argue that $\G_1$ is a too small subset of $\G$. We will show, for
instance, that cycle decidability and cycle finiteness are trivial problems
in $\G_1$ (Propositions~\ref{prop:fininfdec}, \ref{prop:fininfcf}). Thus one
asks why the notions of conjugacy and cycle type equality drift apart in
$\G$. The crucial observation is that by changing the group from $\Sym \N$
to $\G$, the notion of conjugacy changes. Conjugation in $\G$ is always
afforded by a \emph{recursive} permutation --- it gets a constructive
character. Cycle type equality, on the other hand, remains non-constructive
in Kent's theorem. If we define an effective version of cycle type equality,
we are not restricted by the second part of Kent's theorem anymore and may
find a bigger set in which the new formulation of the theorem holds. To
make cycle type equality effective, one first needs a witness for the
condition of cycle type equality. Such a witness would be a bijection
between the weakly connected components which preserves the size of each
component, or, alternatively, a bijection between the sets of vertices which
respects weakly connected components in both directions. If we denote the
equivalence relation whose equivalence classes are the weakly connected
components of $f$ by $\equiv_f$, then we require a permutation $\theta$ of
$\N$ such that $x \equiv_f x' \Leftrightarrow \theta(x) \equiv_g
\theta(x')$. This takes the form of an equivalence isomorphism; see
Figure~\ref{fig:conjcycletypeeq} for an illustration of witnesses for
conjugacy and cycle type equality. The notions of equivalence relation and
isomorphism thereof can easily be transferred into Recursion Theory and
yield the desired definition of effective cycle type equality. The present
paper starts by giving a number of possible definitions for decidable
equivalences which parallel characterisations of equivalence relations in
non-constructive Mathematics, and shows that they are equivalent, too, in
Recursion Theory. Based on this solid notion of decidable equivalence, we
are interested in recursive permutations $f$ whose relation $\equiv_f$ is
decidable, as only for those permutations effective cycle type equality can
be defined in the language of Recursion Theory. The set of these
permutations is exactly $\Perm$, and indeed one obtains an analogue to the
theorem in $\Sym \N$, our Theorem~\ref{theorem:permconj2}, which states that
in $\Perm$ $\G$-conjugacy and effective cycle type equality are equivalent.
This theorem has two advantages over the one in $\G_1$:
\begin{inparaenum}[(1)] \item the equivalence is constructive, i.e. a
witness for either condition can be converted into one for the other by
Turing-computable algorithms, and \item $\G_1$ is properly (and trivially)
contained in $\Perm$, by Proposition~\ref{prop:fininfdec}.\end{inparaenum}

\begin{figure}
\tikzset{
  ball/.style={
    circle,
    minimum size=0.7em,
    inner sep=0,
    font=\footnotesize
  }
}
\renewcommand{\thesubfigure}{{\normalfont\alph{subfigure}}}
\makebox[\linewidth][c]{%
\subcaptionbox{Conjugacy \label{fig:conj}}{%
  \begin{tikzpicture}[nodes=draw,font=\small]
  \node[draw=none] at (1,0) {};
  \node[draw=none] at (1,3) {};

  \node[ball,fill=black] (x)    at (1,3)    {};
  \node[draw=none]              at (1,3.4)  {$\strut x$};
  \node[ball,fill=black] (fx)   at (4,3)    {};
  \node[draw=none]              at (4,3.4)  {$\strut f(x)$};
  \node[ball,fill=black] (hx)   at (1,1)    {};
  \node[draw=none]              at (1,0.6)  {$\strut h(x)$};
  \node[ball,fill=black] (ghx)  at (4,1)    {};
  \node[draw=none]              at (4,0.6)  {$\strut gh(x)$};

  \path[every node/.style={font=\sffamily\small}]
    (x.east)   edge[->] node[above] {$\strut f$}               (fx.west)
    (hx.east)  edge[->] node[below] {$\strut g$}               (ghx.west)
    (x.south)  edge[->,dashed] node[midway,left]  {$\strut h$} (hx.north)
    (fx.south) edge[->,dashed] node[midway,right] {$\strut h$} (ghx.north);
\end{tikzpicture}%
}
\hskip 1.0cm
\subcaptionbox{Cycle type equality \label{fig:cycletypeeq}}{%
  \begin{tikzpicture}
  \node[draw=none] at (1,0) {};
  \node[draw=none] at (1,3) {};

  \node[draw=none] at (1,3) {$\strut f:$};
  \draw[rotate around={-45:(2,2)}]    (2,2)     ellipse (0.9 and 1.6);
  \draw[rotate around={45:(2,2)}]     (2,2)     ellipse (0.4 and 0.7);
  \draw                               (2.3,2.8) ellipse (0.2 and 0.2);
  \draw                               (2.8,2.3) ellipse (0.2 and 0.2);
  \draw[rotate around={45:(1.3,1.3)}] (1.3,1.3) ellipse (0.4 and 0.2);

  \node[draw=none] at (5.5,3) {$\strut g:$};
  \draw[rotate around={-45:(6.5,2)}]   (6.5,2)     ellipse (0.9 and 1.6);
  \draw[rotate around={-30:(6.6,2.3)}] (6.6,2.3)   ellipse (0.4 and 0.7);
  \draw                                (7.448,2.7) ellipse (0.2 and 0.2);
  \draw                                (5.755,1.9) ellipse (0.2 and 0.2);
  \draw[rotate around={-10:(6,1.3)}]   (6,1.2)     ellipse (0.4 and 0.2);

  \draw (2,2)     edge[->,dashed,bend left=15] (6.6,2.3);
  \draw (2.3,2.8) edge[->,dashed,bend left=25] node[pos=0.4,above] {$\strut \theta$} (7.448,2.7);
  \draw (2.8,2.3) edge[->,dashed,bend left=-5] (5.755,1.9);
  \draw (1.3,1.3) edge[->,dashed,bend left=-15] (6,1.2);
\end{tikzpicture}%
}
}
\caption{Witnesses of conjugacy and cycle type equality. Conjugacy of $f$
and $g$ is witnessed by an isomorphism $h$ between their respective
digraphs, cycle type equality by an isomorphism $\theta$ between the
equivalence relations of \emph{undirected} reachability in their respective
digraphs. The solid circles in \ref{fig:conj} are vertices, the ellipses in
\ref{fig:cycletypeeq} are weakly connected components.}
\label{fig:conjcycletypeeq}
\end{figure}
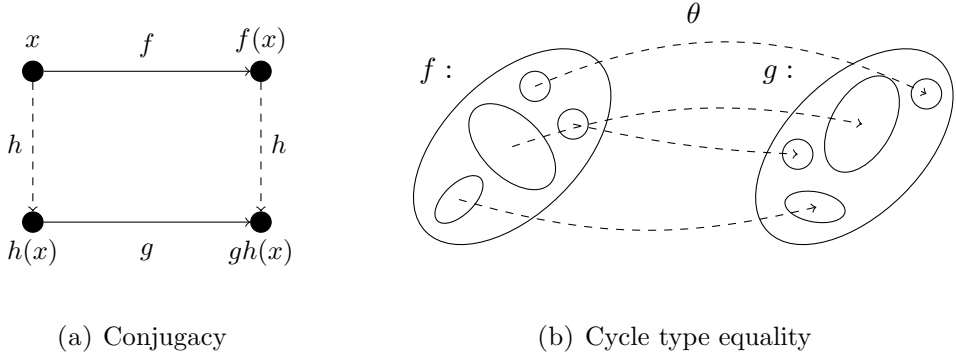

After the theorem about the equivalence of $\G$-conjugacy and effective
cycle type equality in $\Perm$ is established, section~\ref{sec:charperm}
gives a number of characterisations of $\Perm$. The two main ideas come
from the two ways to approach $\Perm$, as per its definition. The
first is the question whether a given $f \in \G$ has decidable cycles,
the second is whether a given decidable equivalence relation is realisable
as the cycle equivalence of a permutation in $\G$. In this process,
\parref{par:cycledec} collects evident sufficient and equivalent extrinsic
criteria for cycle decidability. The approach of \parref{par:normal} is to
characterise cycle decidability intrinsically through normal forms for the
cycle structure of permutations. Two forms, the \emph{normal} and the
\emph{semi-normal} form, are defined. A corollary to
Theorem~\ref{theorem:decnormal} is that a permutation has decidable
cycles iff it is $\G$-conjugate to a permutation in normal form. Indeed if
a permutation is in normal form, a decider for its cycles can be extracted
from the permutation alone. Normality, in this light, is a uniformity
condition. Then, \parref{par:permutability} gives sufficient and
equivalent criteria for a decidable equivalence to be \emph{permutable},
i.e.~to appear as the cycle equivalence of a recursive permutation. It is
shown that decidable and permutable equivalences are in bijection with
recursive normal-form permutations. The techniques developed in this
subsection give powerful tools to construct permutations from mere
equivalence relations which only encode the orbits of a permutation, not
the cyclic structure. Especially Corollary~\ref{coro:pairrhochar} proves
to be useful in existence theorems such as Proposition~\ref{prop:nonpermut},
Lemma~\ref{lemma:infcycle} and Theorem~\ref{theorem:interred}. One further
characterisation of $\Perm$ is of order-theoretic nature. It is shown that
the elements of $\Perm$ are exactly the permutations in a preordered subset
of the automorphism group of some decidable equivalence which are maximal
with respect to cycle inclusion. Here, the characterisation via maximality
does not use notions of computability; these occur only in the setting, i.e.
the choice of the preordered sets.

While the normal form deals with cycle decidability, the semi-normal form
specifies an incompatibility between the structure of finite and infinite
cycles which makes it possible, in addition to deciding the cycles, to
decide the \emph{cycle finiteness problem} of the semi-normal
permutation, that is for each number $x$ to determine if its cycle is finite
or infinite. It has previously been shown by Lehtonen~\cite{lehton09} that
there are recursive permutations which can be defined by fairly elementary
formulae but whose cycle finiteness problem is undecidable. In
\parref{par:cyclefinite} in section \ref{sec:cfunsolv}, it is proved that
the set of permutations for which this problem is decidable is precisely
the union of $\G$-conjugacy classes of recursive semi-normal permutations.
Remarkably, the sets of permutations for which the cycle decidability and
cycle finiteness problems are solvable, each possess a set of generators
with respect to $\G$-conjugacy, namely the normal and the semi-normal
permutations, such that the problems are uniformly solvable on these
generators.

The fact that cycle finiteness in general is undecidable in $\Perm$ can be
applied in reductions. For example, deciding cycle finiteness is an instance
of deciding conjugacy in $\Perm$, which implies that the latter is also
undecidable. As a corollary to a result by van Leeuwen~\cite{leeuwen15},
we obtain that $\Perm$ is not recursively enumerable. In
\parref{par:diffcycledec} it is shown that cycle decidability problems in
$\G$ are at most as hard as the Halting Problem for Turing machines and
that this upper bound is attained. It is also proved that the classes of
cycle decidability problems and cycle finiteness problems in $\G$ are
reducible to each other. The last subsection concerns multiplicative closure
of $\Perm$. It is shown that a product of a member of $\Perm$ and a finitary
permutation is again in $\Perm$. In contrast to
Theorem~\ref{theorem:permconj2}, this fact has no algorithmic content. The
third part of Corollary~\ref{coro:permclosure} states that there is no
computable mapping which associates a decider for the cycles of $af \in
\Perm$ to every triple $(a, f, \pi)$, where $a \in F$, $F$ the set of
finitary permutation, $f \in \Perm$ and $\pi$ a decider for the cycles of
$f$. Such an algorithm exists, however, for semi-normal permutations. By
Corollary~\ref{coro:permnotgroup}, a product of two arbitrary (non-finitary)
members of $\Perm$ does not necessarily reside in $\Perm$ which shows that
$\Perm$ is not a group. This may be seen as an ultimate defect in trying to
recover the aforementioned theorem on conjugacy in symmetric groups inside
Recursion Theory. Theorem~\ref{theorem:permconj2} is an entirely
constructive version of this theorem, but its domain, $\Perm$, is missing
the group structure from the non-constructive original, which means that the
notion of conjugacy has to be borrowed from the ambient group $\G$. Kent's
set $\G_1$ suffered from the same problem. Indeed this failure is
predetermined by a result on the composition series of $\G$, also due to
Kent: no set strictly between $F$ and $\G$ which is closed under
$\G$-conjugation, i.e.~normal, can be a group.

If not explicitly introduced, the notation follows \cite{rogers87}, which
can also serve as the primary resource for recursion-theoretic facts used
without citation. The set of natural numbers, including $0$, is denoted $\N$
and the set of positive integers as $\N^+$. We denote the domain of a partial
function $f$ by $\dom f$ and its range by $\rng f$. If a composition of
functions is applied to an object, we save parentheses: instead of $f(g(x))$
or $(fg)(x)$ only the ``application to $x$'' parentheses are written: $fg(x)$.
By $\code{x,y}$ we denote a recursive \emph{pairing function} which maps pairs
$(x,y)$ to numbers bijectively. Instead of $f(\code{x, y})$ we write
$f\code{x, y}$. Existential and universal quantifiers are taken over the
natural numbers if no set is given. The complement of a set $A$ is also
understood as the complement in the natural numbers and written as
$\overbar{A}$. Usually we do not discriminate between a partial recursive
function $f$ and a program, or \emph{G\"odel number}, $\bf f$ under the
standard numbering of partial recursive functions: $\varphi_{\bf f} = f$.
The associated standard numbering of recursively enumerable sets is
$W_x := \dom \varphi_x$. When the function $f$ is used in the description of
an algorithm, it is implied that any G\"odel number for $f$ would suffice.
Most proofs in this paper are constructive and yield, by virtue of the
Church-Turing Thesis, a (Turing-computable) algorithm. This is usually
indicated by an \emph{effectiveness} addition in the formulation of the
statement. The term ``uniformly effectively'' is used throughout the text
according to \cite[\textsection~5.5]{rogers87}. The term ``cycle'' is used
with two different meanings in this paper. Firstly, it can mean an orbit
of the group action a permutation affords on its domain by function
application. Secondly it abbreviates \emph{cyclic permutation}, a
permutation whose support consists of at most one orbit. Commonly $f, g, h$
denote recursive permutations, often members of $\Perm$, $\sigma, \rho,
\psi$ partial recursive functions encoding an object of importance in a
limited scope, $\pi, \gamma$ deciders for equivalences $\Pi, \Gamma$, and
$x, y, z$ natural numbers.

\subsection*{Acknowledgement.} The author wishes to thank Thomas Kahle for
his support of this paper and discussion of earlier versions which helped
to improve the presentation.

\section{Decidable equivalences and their isomorphisms}

\paragraphtitle{Decidable equivalence relations} \parlabel{par:decequiv}
This subsection gives a number of possible constructive representations for
decidable equivalence relations and shows that they all are effectively
equivalent: for each pair of representations there is an algorithm which
transforms one into the other.

A set $\Pi \subseteq \P(\N)$ of subsets of $\N$ is called a \emph{partition}
if the sets in $\Pi$ are non-empty, pairwise disjoint and their union is
$\N$. The elements of $\Pi$ are called its \emph{blocks}. Denote by
$P(x, \Pi)$ the unique block $P \in \Pi$ with $x \in P$. Usually we
abbreviate this to $P(x)$ if there is no danger of confusion. It is well
known that the concepts of partition and equivalence relation are
equivalent: to each partition $\Pi$ there is the equivalence $x \equiv_\Pi
x' :\Leftrightarrow P(x) = P(x')$ and the classes of every equivalence form
a partition. We write $x \Pi x'$ instead of $x \equiv_\Pi x'$ and speak
of $\Pi$ also as an equivalence relation.

\begin{definition} An equivalence relation $\Pi$ over $\N$ is
\emph{decidable}, if for every $x, x' \in \N$, the predicate $x \Pi x'$ is
decidable.
\end{definition}

Let $\fr t$ and $\fr f$ be two distinct natural numbers, representing
\emph{true} and \emph{false}. We adopt the following syntax for predicates
$p$:
\[
  [p(x_1, \dots, x_n)] := \begin{cases}
    \fr t, & \text{$p(x_1, \dots, x_n)$ is true}, \\
    \fr f, & \text{else},
   \end{cases}
\]
Then we can state the decidability of $\Pi$ as follows: there exists a
recursive function $\pi$ of two variables such that $\pi(x, x') =
[x \Pi x']$.

For Recursion Theory, the need for representation of objects as natural
numbers arises. A partition $\Pi$ is identified with its set of blocks. By
the symbol $\equiv_\Pi$ we denote the same equivalence relation as $\Pi$ but
represented in the customary fashion as a set of ordered pairs $(x, x')
\in \; \equiv_\Pi \; :\Leftrightarrow x \Pi x'$. Then we see that $\Pi$ is
a decidable equivalence iff $\equiv_\Pi$ is a recursive set (of pairs) in
the usual sense of \cite[\textsection~5.3]{rogers87}. This subsection shows
that the distinction of these two representations is immaterial to
computability. The following lemma is obvious from the $s^m_n$ Theorem:

\begin{lemma} \label{lemma:sigmachar} $\Pi$ is decidable iff there
is a computable function $\sigma$ such that $\varphi_{\sigma(x)}$ decides
$P(x)$ for all $x \in \N$. The decider $\pi$ for $\Pi$ and the function
$\sigma$ are computationally equivalent, in that either can be computed
from the other. \qed
\end{lemma}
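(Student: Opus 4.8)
The plan is to unwind both implications directly from the $s^m_n$ theorem; there is no real obstacle here, only a small amount of bookkeeping about totality, which I will flag explicitly.

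First I would treat the forward direction. Assume $\Pi$ is decidable and fix a recursive $\pi$ with $\pi(x, x') = [x \Pi x']$. Viewing $\pi$ as a binary partial recursive function and applying the $s^m_n$ theorem produces a (primitive) recursive $\sigma$ with $\varphi_{\sigma(x)}(x') = \pi(x, x')$ for all $x, x'$. Since $x' \in P(x)$ holds exactly when $x \Pi x'$, the total function $\varphi_{\sigma(x)}$ decides the block $P(x)$. The passage from a G\"odel number of $\pi$ to $\sigma$ is nothing but the $s^m_n$ construction, hence uniformly effective.

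For the converse, suppose $\sigma$ is computable and $\varphi_{\sigma(x)}$ decides $P(x)$ for every $x$; in particular each $\varphi_{\sigma(x)}$ is \emph{total} and takes values in $\{\fr t, \fr f\}$. Define $\pi(x, x') := \varphi_{\sigma(x)}(x')$. As the composition of $\sigma$ with the universal partial recursive function this is partial recursive, and it is total precisely because each $\varphi_{\sigma(x)}$ is; moreover $\pi(x, x') = [x' \in P(x)] = [x \Pi x']$, so $\pi$ decides $\Pi$, and again $\pi$ is obtained uniformly from a G\"odel number of $\sigma$.

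The only point that requires care is reading the phrase ``$\varphi_{\sigma(x)}$ decides $P(x)$'' as including the assertion that $\varphi_{\sigma(x)}$ is total; this is exactly what guarantees that the $\pi$ built in the converse direction is total rather than merely partial. With that understood, both directions, together with the stated computational equivalence of $\pi$ and $\sigma$, are immediate.
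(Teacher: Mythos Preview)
Your proof is correct and is exactly the unwinding of the $s^m_n$ theorem that the paper has in mind; the paper itself gives no proof beyond declaring the lemma obvious from $s^m_n$. Your explicit remark that ``decides $P(x)$'' entails totality of $\varphi_{\sigma(x)}$ is the only point worth spelling out, and you handled it.
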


If $\Pi$ is decidable, then every block of $\Pi$ is a decidable set. The
converse is not true. As Lemma~\ref{lemma:sigmachar} suggests, a
\emph{uniform} decider for the blocks is needed. An example of an
undecidable equivalence on $\N$ whose blocks are all recursive can be
manufactured from an undecidable problem in two variables which becomes
decidable if one of its variables is fixed. Consider the two-variable
decision problem, given a program $x$ and a (suitable encoding of) a
cardinal number $m \le \aleph_0$ to decide if $|W_x| = m$. Write
$C(x,m) = [|W_x| = m]$. By Rice's Theorem $C(x,m)$ is undecidable, however
if $x$ is fixed, it becomes trivial. Define an equivalence $\Pi$ as
\[
  \code{x,m}\Pi\code{x',m'} :\Leftrightarrow x = x' \wedge C(x,m) = C(x,m').
\]
The block $P(\code{x,m}, \Pi) = \{\code{x, m'} : C(x,m) = C(x,m')\}$ is a
recursive set as $C(x,m)$ is computable for fixed $x$. \emph{If} the
equivalence as a whole was decidable, we could decide $E(x,m,m') =
[C(x,m) = C(x,m')]$ uniformly in $x, m, m'$. Now let $x, m$ be given.
Choose two distinct cardinals $p, q$ which are also distinct from $m$.
At least two of $C(x,m), C(x,p), C(x,q)$ must be false. A complete
discussion of the cases

\begin{figure}[h!]
\begin{tabular}{c|c|c|c|c|c}
$C(x,m)$ & $C(x,p)$ & $C(x,q)$ & $E(x,m,p)$ & $E(x,m,q)$ & $E(x,p,q)$ \\
\hline
$\fr f$  & $\fr f$  & $\fr t$  & $\fr t$    & $\fr f$    & $\fr f$    \\
$\fr f$  & $\fr t$  & $\fr f$  & $\fr f$    & $\fr t$    & $\fr f$    \\
$\fr t$  & $\fr f$  & $\fr f$  & $\fr f$    & $\fr f$    & $\fr t$    \\
$\fr f$  & $\fr f$  & $\fr f$  & $\fr t$    & $\fr t$    & $\fr t$    \\
\end{tabular}
\end{figure}

\noindent characterises the statement $C(x,m) = \fr t$. Thus if we assume
$E(x,m,m')$ to be uniformly computable in all its parameters, we can decide
$|W_x| = m$ for any given $x$ and $m$, which is a \emph{contradiction}.

\begin{proposition} \label{prop:pchar} Let $A$ be a non-empty initial
segment, i.e.~either $A = \{0, \dots, n-1\}$ for some $n \in \N^+$ or
$A = \N$. Then a partition $\Pi = \{P_i : i \in A\}$ is decidable iff
there is a partial recursive function $p$ such that if $i \in A$ it follows
that $\varphi_{p(i)}$ decides $P_i$. A program for $p$ can be obtained from
a decider $\pi$ for $\Pi$ and vice versa.
\end{proposition}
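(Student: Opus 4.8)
The plan is to prove both implications by explicit algorithms, so that the ``vice versa'' clause comes for free. The only genuinely new ingredient over Lemma~\ref{lemma:sigmachar} is that the deciders are now indexed by the abstract block labels $i \in A$ rather than by elements of the blocks; I will bridge this gap by using the least element of each block as a canonical representative.

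\emph{From $\pi$ to $p$.} Assume $\pi$ decides $\Pi$. Define a partial recursive function $i \mapsto r_i$ by: $r_0 := 0$, and $r_i$ is the least $x$ with $\pi(x, r_k) = \fr f$ for all $k < i$, undefined if there is no such $x$ (or if some $r_k$, $k < i$, is undefined). A straightforward induction shows that $r_0, r_1, \dots$ lists the least elements of the blocks of $\Pi$ in increasing order; in particular $r_i$ is defined exactly for the $i$ belonging to the initial segment $A$ (whose length equals the number of blocks), and $i \mapsto P(r_i, \Pi)$ runs over the blocks of $\Pi$ bijectively. There is no loss of generality in taking $P_i := P(r_i, \Pi)$ as the enumeration, since the proposition characterises decidability of $\Pi$ and is insensitive to the labelling of its blocks. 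Now $(i, y) \mapsto \pi(r_i, y)$ is partial recursive, so the $s^m_n$ theorem yields a recursive $p$ with $\varphi_{p(i)}(y) = \pi(r_i, y)$ for all $i, y$. For $i \in A$ this $\varphi_{p(i)}$ is total and satisfies $\varphi_{p(i)}(y) = [y \Pi r_i] = [y \in P_i]$, hence decides $P_i$, and $p$ was obtained effectively from $\pi$.

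\emph{From $p$ to $\pi$.} Assume $\{P_i : i \in A\}$ is a partition and $\varphi_{p(i)}$ decides $P_i$ for each $i \in A$. Given $x$, compute $\varphi_{p(0)}(x), \varphi_{p(1)}(x), \dots$ in this order and let $i_x$ be the first index with output $\fr t$. This search terminates: if $x \in P_j$ then $j \in A$, and because $A$ is an initial segment every $i \le j$ also lies in $A$, so $\varphi_{p(0)}, \dots, \varphi_{p(j)}$ are all total and $\varphi_{p(j)}(x) = \fr t$; by disjointness of the blocks $i_x = j$ is exactly the index of the block containing $x$. Putting $\pi(x, x') := \varphi_{p(i_x)}(x')$ gives a total function with $\pi(x, x') = [x' \in P_{i_x}] = [x' \in P(x, \Pi)] = [x \Pi x']$, so $\pi$ decides $\Pi$ and is obtained effectively from $p$.

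The step I expect to require the most care is the termination claim in the second direction, and it is precisely there that the hypothesis that $A$ is an initial segment is used: it forces every index queried before the successful one to lie in $A$ and hence to carry a \emph{total} decider. One is otherwise tempted to dovetail over $i$, but then a stray index $i \notin A$ at which $\varphi_{p(i)}$ happened to be identically $\fr t$ could be reported spuriously, so the search must be carried out in increasing order of $i$ for the argument to go through. In the first direction the only thing to verify is that the representative sequence $(r_i)$ has domain exactly $A$, which is immediate once one observes that it enumerates the block-minima of $\Pi$ without repetition.
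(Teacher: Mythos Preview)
Your proof is correct and follows essentially the same approach as the paper: your representatives $r_i$ coincide with the paper's function $n$ (both enumerate the least elements of the blocks in increasing order), and your backward direction is the paper's $j(x) := \mu j[x \in P_j]$ with $\pi(x,x') := \varphi_{p(i_x)}(x')$ in place of the equivalent $[j(x) = j(x')]$. The presentation is somewhat tidier than the paper's, which spends more effort establishing monotonicity properties of $n$ that you sidestep by making the recursion explicit.
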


\begin{proof} ``$\Rightarrow$'': Let $\pi$ decide $\Pi$. It follows from
Lemma~\ref{lemma:sigmachar} that every block is decidable. We need to
enumerate a decider for every block of $\Pi$ so that each block is represented
exactly once among the first $|\Pi|$ values of $p$. Define $i(x) :=
\mu x'[x\Pi x']$ which is total recursive via $\pi$ and returns the least
representative of the block of $x$. Let $\sigma$ be the function obtained
from $\pi$ by Lemma~\ref{lemma:sigmachar}. Now
\begin{align*}
  n(x) &:= \mu x'[\forall y < x: i(x') \not= in(y)], \\
  p(x) &:= \sigma n(x)
\end{align*}
are both computable by appeal to the Church-Turing Thesis. $p$ is obtainable
uniformly effectively from $\pi$. We will prove that $p$ is the wanted
enumeration. The $\mu$ search in $n$ for input $x$ computes $n(y)$ for all
$y < x$. It follows that if $p$ halts on input $x$ it must have halted on
all inputs $y < x$ and if $p$ does not halt on $y$ it will not halt on
inputs $x > y$. The set of inputs where $p$ halts is an initial segment $A$
of $\N$. If $x \in A$, $p(x) \in \rng \sigma$ is a program to decide some
block of $\Pi$. We claim that $n$ possesses these three properties:
\begin{compactenum}[\hskip 2em(a)]
\item $in(x) = n(x)$,
\item $x < z \Rightarrow n(x) < n(z)$ and
\item $x \le n(x)$
\end{compactenum}
for all $x, z$. Clearly $in(x) \le n(x)$ and furthermore $\forall y < x:
iin(x) = in(x) \not= in(y)$ where the inequality follows by definition of
$n(x)$. This means that $in(x)$ also satisfies the condition over which
$n(x)$ is minimised, so $n(x) \le in(x)$ and it must hold equality. For the
second property observe that the sets over which $n(x)$ and $n(z)$ minimise
are isotone: $\{x' : \forall y < x: i(x') \not= in(y)\} \subsetneq \{x' :
\forall y < z: i(x') \not= in(y)\}$. The difference of these sets includes
$n(x)$ so that indeed $n(x) < n(z)$. The last property follows by induction
from the monotonicity: we have $0 = n(0)$ and if $x \le n(x)$ for some
$x$, it follows $x \le n(x) < n(x+1)$ and thus $x+1 \le n(x+1)$.

We can now prove that each block $P \in \Pi$ has at least one decider in
$\rng p$: Let $j$ be the minimal element in $P$, then $i(j) = j$. If $j$ was
not already taken as an image $n(y)$ for $y < j$, it must hold that
$\forall y < j: j \not= n(y)$. But $j = i(j)$ and $n(y) = in(y)$, so that
$\forall y < j: i(j) \not= in(y)$, i.e.~$j$ satisfies the condition over
which $n(j)$ minimises and $j \ge n(j)$. By the third property above also
$n(j) \ge j$ and so $j = n(j)$ at the latest. On the other side, we see from
the definition of $n$ that $\forall y < x: in(x) \not= in(y)$, which means
$n(x)$ and $n(y)$ are in different blocks. Therefore $\sigma n(x)$ decides
a different block than $\sigma n(y)$ for $y < x$. Consequently each block
has at most one decider. In conclusion $p$ is partial recursive and halts
precisely on an initial segment of $\N$. Wherever it halts it gives a program
to decide a block of $\Pi$ so that each block gets exactly one decider.

``$\Leftarrow$'': The function $j(x) := \mu j[x \in P_j]$ is computable by
means of $p(j)$. It is a total function because $\Pi$ is a partition. $A$ is
an initial segment of $\N$ and every $x$ will be found in some $P_j$ for
$j \in A$, i.e.~before, in the $\mu$ search in $j(x)$, $p$ must be evaluated
for an argument $j \not\in A$, where its behaviour is not specified. Then
$x \Pi x' \Leftrightarrow j(x) = j(x')$. Hence $\pi(x, x') := [j(x) = j(x')]$
decides $\Pi$ and can be obtained uniformly effectively from a program for
$p$.
\end{proof}

Another way to constructively define equivalence relations is to require
a method which assigns each number a label. The equivalence classes are
formed by grouping equally labeled numbers together:

\begin{proposition} \label{prop:recdec} For each decidable equivalence
$\Pi$ there is a recursive function $r$ whose non-empty fibers are the
blocks of $\Pi$. Conversely the set of non-empty fibers of any recursive
function is a decidable equivalence. There are algorithms to convert a
decider for $\Pi$ into a recursive function and vice versa.
\end{proposition}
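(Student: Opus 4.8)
The plan is to prove both implications by exhibiting, for the forward direction, the canonical–representative function and, for the converse, the kernel equivalence of $r$, and then to note that both conversions are uniform. This is the computable refinement of the textbook facts that a function induces an equivalence via its kernel and that an equivalence admits a system of representatives, so I do not expect a genuine obstacle; the only care needed is the bookkeeping in the forward direction.

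For ``$\Rightarrow$'', given a decider $\pi$ for $\Pi$, I would set $r(x) := \mu x'[x \Pi x']$, the least element of the block of $x$. This is total recursive by means of $\pi$, since the block of $x$ contains $x$ and the search terminates. I would then check that $r(x) = r(x')$ holds exactly when $x$ and $x'$ lie in the same block, so each fiber of $r$ is contained in a block; and that if $j$ is the least element of a block $P$, then $r^{-1}(j) = P$, so every block occurs as a fiber. The one point requiring a line of care is the absence of spurious non-empty fibers: if $r^{-1}(k) \neq \emptyset$, pick $x$ with $r(x) = k$; then $k$ lies in the block of $x$ and is its least element, so $r^{-1}(k)$ is precisely that block. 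Hence the non-empty fibers of $r$ are exactly the blocks of $\Pi$, and a G\"odel number for $r$ is obtained uniformly effectively from $\pi$ by composition. (Alternatively one could recycle the representative function $n$ from the proof of Proposition~\ref{prop:pchar}, but the direct $\mu$-search is shorter.)

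For ``$\Leftarrow$'', given a recursive function $r$, I would define $x \Pi x' :\Leftrightarrow r(x) = r(x')$. This is an equivalence relation because equality is, and $\pi(x,x') := [r(x) = r(x')]$ is a decider for it, computable from any program for $r$. The block of $x$ under $\Pi$ is $\{x' : r(x') = r(x)\} = r^{-1}(r(x))$, a non-empty fiber of $r$; conversely each non-empty fiber $r^{-1}(k)$ with $k \in \rng r$ equals the block of any of its members. Thus the set of blocks of $\Pi$ is exactly the set of non-empty fibers of $r$, and $\pi$ is again obtained uniformly effectively from $r$. The whole argument is routine, essentially parallel to (and simpler than) the proof of Proposition~\ref{prop:pchar}; if anything is delicate it is only the verification that the representative map in the forward direction hits every block and nothing else, which is what the use of the \emph{least} representative secures.
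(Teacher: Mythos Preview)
Your proposal is correct and essentially identical to the paper's proof: both directions use exactly the same constructions, $r(x) := \mu x'[x \Pi x']$ for the forward direction and $\pi(x,x') := [r(x) = r(x')]$ for the converse. The paper's version is terser, dismissing the verification that the non-empty fibers coincide with the blocks as obvious, whereas you spell out that bookkeeping explicitly.
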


\begin{proof} If $\Pi$ is decidable, the function $r(x) := \mu x'[x \Pi x']$
is computable. Obviously $r(x) = r(x') \Leftrightarrow x \Pi x'$. If $r$ is
any recursive function, then $\lambda xx'[r(x) = r(x')]$ is a recursive
decider for the equivalence induced by the non-empty fibers of $r$.
\end{proof}

The results thus far enable us to represent a decidable partition $\Pi$ in
one of four constructive ways: \begin{inparaenum}[(1)] \item as the
recursive function $\pi$ which decides the predicate $[x \Pi x']$, \item as
the $\sigma$ function from Lemma~\ref{lemma:sigmachar}, \item as the
partial recursive $p$ from Proposition~\ref{prop:pchar} which enumerates
deciders for the blocks of $\Pi$, and \item as the non-empty fibers of a
recursive function\end{inparaenum}. The function $\pi$ decides $\equiv_\Pi$
as a set of ordered pairs while $p$ corresponds to the representation of the
partition $\Pi$ as a set of blocks. We have thus seen that these
representations are computationally equivalent, i.e.~we can go effectively
from one representation to another. This result justifies our identification
of partitions as sets of blocks with equivalence relations in the form of
sets of ordered pairs.

We remark in passing that it is not critical that $p$ enumerates deciders
for the blocks; recursive enumerators are sufficient. The proof is via dove
tailing.

\begin{proposition} \label{prop:qchar} Let $A$ be a non-empty initial
segment and $\Pi = \{P_i : i \in A\}$. Then $\Pi$ is decidable iff there is
a partial recursive $q$ such that $\dom q = A$ and if $i \in A$, then
$\varphi_{q(i)}$ enumerates $P_i$. A program for $q$ can be obtained from
a decider $\pi$ for $\Pi$ and vice versa. \qed
\end{proposition}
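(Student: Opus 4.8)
The plan is to mirror the proof of Proposition~\ref{prop:pchar}: the forward direction reduces to it directly, and the converse is the dovetailing argument promised by the preceding remark.

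For ``$\Rightarrow$'', suppose a decider $\pi$ for $\Pi$ is given. First apply Proposition~\ref{prop:pchar} to obtain a partial recursive $p$ with $\dom p = A$ such that $\varphi_{p(i)}$ decides $P_i$ whenever $i \in A$. Since any decidable set is recursively enumerable uniformly in a decider for it — simply enumerate those $x$ on which the decider answers $\fr t$ — the $s^m_n$ theorem gives a total computable $e$ such that $\varphi_{e(j)}$ enumerates the set decided by $\varphi_j$ (whenever $\varphi_j$ is a decider). Put $q := e \circ p$. Then $\dom q = \dom p = A$ and $\varphi_{q(i)}$ enumerates $P_i$ for each $i \in A$, and a program for $q$ is obtained uniformly effectively from $\pi$.

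For ``$\Leftarrow$'', suppose $q$ is given with $\dom q = A$ and $\varphi_{q(i)}$ enumerating $P_i$ for $i \in A$. To decide $x \Pi x'$ we dovetail the computations $\varphi_{q(0)}, \varphi_{q(1)}, \dots$ in the usual way: at stage $s$, for every $i \le s$ such that $q(i)$ halts within $s$ steps, run $\varphi_{q(i)}$ for $s$ more steps and record each number it has output so far together with the index $i$ that produced it. Because $\Pi$ is a partition of $\N$ whose blocks are indexed precisely by $A = \dom q$, the number $x$ lies in exactly one block $P_{i_x}$ with $i_x \in A$; hence $q(i_x)\!\downarrow$ and $\varphi_{q(i_x)}$ eventually outputs $x$, so the search discovers a — by disjointness of blocks, unique — index $i_x$ with $x$ enumerated from $\varphi_{q(i_x)}$, and likewise an index $i_{x'}$ for $x'$. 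Setting $\pi(x,x') := [\,i_x = i_{x'}\,]$ defines a total recursive function, and it is correct since $x \Pi x' \Leftrightarrow P(x) = P(x') \Leftrightarrow i_x = i_{x'}$. A program for this $\pi$ is read off from $q$ by the Church--Turing thesis.

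The one genuine subtlety — and the sole place where the hypotheses are used in full — is the termination of the dovetailed search: it relies on knowing that both $x$ and $x'$ are actually enumerated by some $\varphi_{q(i)}$, which is exactly what is guaranteed by $\dom q = A$ being the index set of a partition of all of $\N$. Note that $A$ never has to be determined explicitly: indices $i \notin A$, for which $q(i)$ diverges, contribute nothing to the dovetailing and do not obstruct progress on the indices in $A$ that matter. Disjointness of the blocks then supplies the well-definedness of $i_x$ and $i_{x'}$, without which the output would be ambiguous.
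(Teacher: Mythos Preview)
Your proof is correct and is precisely the argument the paper has in mind: the paper does not actually write out a proof of this proposition (it is stated with a bare \qed), relying on the preceding remark that ``recursive enumerators are sufficient'' and ``the proof is via dove tailing.'' Your forward direction via Proposition~\ref{prop:pchar} plus a uniform decider-to-enumerator conversion, and your backward direction via dovetailing to recover the unique block index of each input, are exactly the intended steps.
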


\paragraphtitle{Decidable cycles} \parlabel{par:decperm}
For any permutation $f$, the set of cycles of $f$ form a partition of its
domain. Denote the cycle of $f$ to which $x$ belongs by $[x]_f$. Let $\Phi$
temporarily denote the function which maps a permutation to its thus
associated partition. Conversely, to any partition $\Pi$ of a set, the
inverse image $\Phi^{-1}(\Pi)$ is the set of permutations associated with
it. For Recusion Theory, these considerations are restricted to decidable
equivalences and recursive permutations.

\begin{definition} If $\Pi$ is a decidable equivalence, the set of
associated permutations is $\Perm \Pi := \Phi^{-1}(\Pi) = \{f \in \G :
\forall x: P(x, \Pi) = [x]_f\}$. A recursive permutation $f$ has an
associated partition $\Part f := \Phi(f) = \{[x]_f : x \in \N\}$. The
corresponding equivalence relation, for infix usage, is denoted $\equiv_f$.
The central object of study in this paper is the set
\[
  \Perm := \bigcup_{\text{$\Pi$ decidable}} \Perm \Pi.
\]
\end{definition}

Writing the cycle equivalence relation down explicitly, $x \equiv_f x'
:\Leftrightarrow \exists k \in \Z: x' = f^k(x)$, it takes the form of a
\emph{reachability relation}. A recursive permutation $f$ is in $\Perm$
precisely when reachability in its undirected functional graph is
decidable.

It is in general not true that $\Part f$ is a decidable partition for each
$f$ or that $\Perm \Pi$ is non-empty for each decidable $\Pi$.
Section~\ref{sec:charperm} treats the questions of when these statements
do hold extensively. At this point we give the relevant definitions only:

\begin{definition} An equivalence $\Pi$ is \emph{permutable} if $\Perm \Pi
\not= \emptyset$. Conversely, a permutation $f$ has \emph{decidable cycles}
if $\Part f$ is a decidable equivalence.
\end{definition}

In this and the following subsection we concentrate on the relation of the
many possible permutations which belong to a single decidable partition
$\Pi$ and describe the conjugacy classes in $\Perm$ in terms of decidable
equivalences.

\begin{definition} In a subset of the group $\G$, two permutations $f$ and
$g$ are $\G$-conjugate if there is $h \in \G$ such that $f = h^{-1}gh$.
Instead of $\G$-conjugate we will use the term \emph{effectively conjugate}
or just \emph{conjugate}. This relation is denoted as $f \sim g$.
\end{definition}

A first step in understanding $\Perm \Pi$ is provided by

\begin{proposition} \label{prop:permconj1} Let $\Pi$ be decidable and
$f, g \in \Perm \Pi$. Then $f$ and $g$ are effectively conjugate. Moreover,
given programs for $f, g$ and a decider $\pi$ for $\Pi$, a program to
compute the conjugation between $f$ and $g$ can be described.
\end{proposition}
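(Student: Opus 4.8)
The plan is to build the conjugator $h$ so that it maps every block of $\Pi$ to itself, re-coordinatizing each block around its least element, which is locatable by $\pi$. Because $f, g \in \Perm\Pi$, both permutations restrict to a \emph{single} cycle on each block $P \in \Pi$, and the lengths of these cycles agree (a finite block of size $n$ carries an $n$-cycle of each, an infinite block carries a bi-infinite orbit of each), so $P$ looks the same from the point of view of $f$ and of $g$. Given programs for $f$, $g$ and a decider $\pi$, set $m(x) := \mu x'[x\Pi x']$, which is total recursive via $\pi$ and returns the least element of the common block $[x]_f = [x]_g$. Since that block is one $f$-orbit, $m(x)$ lies on it; using that $f^{-1}$ is recursive (as $f$ is a recursive permutation), search $k$ over the values $0, 1, -1, 2, -2, \dots$ until $f^k(m(x)) = x$, and put $h(x) := g^k(m(x))$.

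The verification then breaks into four points. First, well-definedness of $h$: if $[x]_f$ is finite of size $n$, then $f^k(m(x)) = f^{k'}(m(x))$ forces $n \mid k - k'$, whence $g^k(m(x)) = g^{k'}(m(x))$ because $g$ also acts there as an $n$-cycle; if $[x]_f$ is infinite the exponent $k$ is unique. So $h(x)$ does not depend on which exponent the search returns. Second, bijectivity: on a block $P$ the map $h$ is the composite of the bijection sending $f^k(m)$ to $k$ modulo $|P|$ and the bijection sending $k$ modulo $|P|$ to $g^k(m)$, hence a bijection of $P$; as the blocks partition $\N$, $h$ is a bijection of $\N$. Third, $h$ is total recursive, so $h \in \G$. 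Fourth, $h$ conjugates: writing $x = f^k(m(x))$ and noting $m(f(x)) = m(x)$ and $f(x) = f^{k+1}(m(x))$, we get $h(f(x)) = g^{k+1}(m(x)) = g(h(x))$, i.e.\ $hf = gh$, i.e.\ $f = h^{-1}gh$, so $f \sim g$. Finally, the entire recipe is an algorithm with inputs (programs for) $f$, $g$ and $\pi$, so a program for $h$ is obtained uniformly effectively, which is the ``moreover'' part.

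I do not expect a deep obstacle here; the care points are (a) observing that the search for $k$ must range over all of $\Z$, not just $\N$, since an infinite cycle is bi-infinite, and justifying that it nonetheless terminates — this is precisely where $f, g \in \Perm\Pi$ (cycles $=$ blocks) enters, guaranteeing $m(x)$ lies in the $f$-orbit and the $g$-orbit of $x$; and (b) the well-definedness of $h$ on finite cycles via the period argument, which is what lets the conjugacy computation in the last step go through without bookkeeping about which exponent was found. Everything else is routine.
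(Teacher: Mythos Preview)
Your proposal is correct and follows essentially the same approach as the paper: both define the conjugator by anchoring each block at its least element $m(x)=\mu x'[x\Pi x']$, performing an alternating search over $\Z$ for an exponent $k$ with $f^k(m(x))=x$, and setting $h(x):=g^k(m(x))$. The paper packages the alternating search into its $\xi$ operator and establishes the key identity $hf^k y(x)=g^k y(x)$ for \emph{all} $k\in\Z$ before verifying bijectivity and the conjugation equation, whereas you achieve the same effect via your explicit well-definedness argument (the period reasoning on finite blocks); these are two presentations of the same point.
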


If $f$ and $g$, as above, belong to the same equivalence relation, they have
the same cycle type and would be conjugate in the full (non-constructive)
symmetric group of the natural numbers. Although $\G$ is not a symmetric
group, the standard proof, as found in \cite[Thm. 2.9]{bhatt98} carries
through under the additional premises of the proposition. A major
non-constructive step in their proof is to pair cycles from $f$ and $g$ with
matching lengths. This problem does not arise in our scenario because for
each $x$ we know that the cycle in $f$ to which $x$ belongs has the same
length as the cycle for $x$ in $g$, since $f$ and $g$ are permutations
associated to the same partition. It is also convenient that the cycles for
$x$ in $f$ and in $g$ contain the same elements. Before we give the proof,
we introduce the $\xi$ operator which is the $\mu$ operator equivalent for
(signed) integers. Its introduction is motivated by the observation that
powers of permutations act like integers modulo cycle length.

\begin{definition} \label{def:delta} \
\begin{enumerate}[\hskip .99em(a)]
\item Let $\delta: \Z \to \N$ denote the (intuitively computable) bijection
\[
  \delta(k) := \begin{cases}
    0, & k = 0, \\
    -2k - 1, & k < 0, \\
    2k, & k > 0.
   \end{cases}
\]
\item A partial function $\beta: \Z \rightsquigarrow \Z$ is called
\emph{partial recursive} if $\delta\beta\delta^{-1}: \N \rightsquigarrow
\N$ is a partial recursive function.
\item Let $p: \Z \rightsquigarrow \Z$ be a partial recursive function. By
writing $\xi k[p(k)]$ we mean that $k$ is an integer and we perform
\emph{alternating search}, that is we evaluate $p(0), p(-1), p(1), p(-2),
p(2), \dots$ in this order, until $p$ becomes $\fr t$ for the first time.
The first argument $k$ in this process achieving $p(k) = \fr t$ becomes
the value of the $\xi$ expression and if no such argument exists or $p$
is undefined on some argument on the way, $\xi$ does not terminate.
\end{enumerate}
\end{definition}

\begin{lemma} $p: \Z \rightsquigarrow \Z$ be a partial recursive function.
Then $\xi k[p(k)]$ is partial recursive.
\end{lemma}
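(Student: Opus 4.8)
The plan is to reduce the claim directly to the definition of partial recursiveness for integer functions given in Definition~\ref{def:delta}(b): I must show that $\delta \circ (\xi k[p(k)]) \circ \delta^{-1}$, viewed as a unary partial function $\N \rightsquigarrow \N$, is partial recursive in the ordinary sense. Since $\xi k[p(k)]$ is a \emph{constant} (a single integer or undefined), the function $\delta^{-1}$ on the input side is irrelevant; what I really need is that the integer $\xi k[p(k)]$ is computable whenever it is defined, i.e.\ that there is an algorithm which runs the alternating search and outputs (the $\delta$-code of) the first $k$ with $p(k) = \fr t$.

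First I would unwind the hypothesis: $p \colon \Z \rightsquigarrow \Z$ being partial recursive means, by Definition~\ref{def:delta}(b), that $\tilde p := \delta p \delta^{-1} \colon \N \rightsquigarrow \N$ is partial recursive; fix a program for $\tilde p$. Next I would make the enumeration $0, -1, 1, -2, 2, \dots$ explicit: this is exactly the sequence $\delta^{-1}(0), \delta^{-1}(1), \delta^{-1}(2), \dots$, so the $n$-th value tried by the alternating search is $\delta^{-1}(n)$, and evaluating $p$ there amounts to computing $\delta^{-1}\tilde p(n)$. The alternating search is therefore literally an ordinary unbounded search: let $n_0 := \mu n[\, \tilde p(n) = \delta(\fr t)\,]$, which is partial recursive in $\tilde p$ (and $\delta$ is intuitively computable, hence its value $\delta(\fr t)$ is a fixed numeral), and set the value of $\xi k[p(k)]$ to be $\delta^{-1}(n_0)$. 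I should check that the termination behaviour matches the specification in Definition~\ref{def:delta}(c): the $\mu$ search diverges exactly when some $\tilde p(n)$ in the scanned prefix diverges or no $n$ yields $\delta(\fr t)$, which corresponds precisely to ``$p$ undefined on some argument on the way'' or ``no such argument exists''. Hence $\delta(\xi k[p(k)]) = n_0 = \mu n[\tilde p(n) = \delta(\fr t)]$ as partial functions, and the right-hand side is partial recursive; composing with the (computable, constant) passage through $\delta^{-1}$ on the input side, which carries no information since the expression has no free variable, gives that $\delta \circ (\xi k[p(k)]) \circ \delta^{-1}$ is partial recursive, as required.

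There is really no hard part here; the only thing to be careful about is bookkeeping between the ``integer world'' and the ``$\N$ world'' via $\delta$, and the observation that the alternating search is not some new search operator but just $\mu$ composed with the fixed reindexing $\delta^{-1}$. If one wants the statement in its uniform form (a program for $\xi k[p(k)]$ obtained effectively from a program for $p$), this is immediate from the $s^m_n$ Theorem applied to the description above. I would therefore present the argument as a short paragraph identifying $\xi$ with $\mu$ under $\delta$ and invoking closure of partial recursive functions under $\mu$.
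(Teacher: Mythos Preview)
Your proof is correct and is essentially the paper's one-line argument spelled out in detail: the paper simply writes $\xi k[p(k)] = \delta^{-1}\mu i[\delta p\delta^{-1}(i)]$, which is exactly your $\delta^{-1}(n_0)$ with $n_0 = \mu n[\tilde p(n) = \delta(\fr t)]$. Your extra discussion of the constant nature of the expression and the input-side $\delta^{-1}$ is harmless but unnecessary; the content is the single observation that alternating search is $\mu$-search reindexed by $\delta^{-1}$.
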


\begin{proof} Observe that $\xi k[p(k)] =
\delta^{-1}\mu i[\delta p\delta^{-1}(i)]$ with the exact same semantics.
\end{proof}

The $\delta$ function takes an integer $k$ and, to produce the output
natural number, doubles the absolute value of $k$ and subtracts from this
the sign bit of $k$. To compute the inverse of $\delta$, examine the parity
of the input, as it determines the sign of the output. The absolute value of
the output integer is half of the input, rounded up to the next integer.
Based on these intuitions for dealing with $\delta$ and $\delta^{-1}$, and
by appeal to the Church-Turing Thesis, the function
$\lambda xx'\xi k[f^k(x) = x']$ is partial recursive and total on pairs
$(x, x')$ with $x \equiv_f x'$.

\begin{proof}[Proof of Proposition~\ref{prop:permconj1}] We construct the
conjugation $h$. Let $\pi$ decide $\Pi$ and $x$ be given, then calculate
$y(x) := \mu y[x \Pi y]$ via $\pi$. This calculation terminates as
$x \Pi x$, so $y(x) \le x$. By alternating search we determine $k(x) :=
\xi k[f^ky(x) = x] \in \Z$ which must exist. Return $h(x) := g^{k(x)}y(x)$.
The following facts are immediate from the definition of $h$ and the
prerequisites:
\begin{compactenum}[\hskip 2em(i)]
\item $y(z) = y(x)$ iff $z \in [x]_f$ iff $z \Pi x$ iff $z \in [x]_g$,    
\item for any $x$ the cycle lengths $|[x]_f|$ and $|[x]_g|$ match by (i), 
\item for any $x$ the value $y(x)$ is a fixed point of $h$,               
\item $f^ky(x) = f^{k'}y(x)$ iff $g^ky(x) = g^{k'}y(x)$ by (ii),          
\item if $z \Pi x$ then $h(z) = g^{k(z)}y(x)$ by (i).                     
\end{compactenum}

Because $f^{k(x)}y(x) = x$, it follows that
\[
  hf^{k(x)}y(x) = g^{k(x)}y(x)
\]
holds for all inputs $x$. We want to modify this equation so that the
exponent $k$ is free in $\Z$. Take any $x \in \N$ and $k \in \Z$ and set
$z = f^ky(x)$ so that $z \Pi x$. There is also the representation $z =
f^{k(z)}y(x)$ by (v). This yields $h(z) = g^{k(z)}y(x) = g^ky(x)$ by (iv),
which implies
\begin{align}
  hf^ky(x) = g^ky(x) \tag{$*$} \label{eq:hfkyx}
\end{align}
for all $x \in \N$ and $k \in \Z$.

To see that $h$ is a permutation, assume first $h(x) = h(x')$,
i.e.~$g^{k(x)}y(x) = g^{k(x')}y(x')$. Because $y(x) = g^{k(x')-k(x)}y(x')$,
$y(x)$ and $y(x')$ are in the same cycle of $g$ and hence in the same cycle
of $f$ and must therefore be equal, as values in the range of $y$. We have
shown $g^{k(x)}y(x) = g^{k(x')}y(x)$ which implies $x = f^{k(x)}y(x) =
f^{k(x')}y(x) = x'$. To show surjectivity, let $z$ be given. There is a
representation as $z = g^ky(z)$ for some $k \in \Z$ because $z \in [y(z)]_f
= [y(z)]_g$. But then already $hf^ky(z) = g^ky(z) = z$ by~\eqref{eq:hfkyx}.
This shows that $h$ is a permutation, and for all $x = f^ky(x)$ we see
via~\eqref{eq:hfkyx} that
\[
  h^{-1}gh(x) = h^{-1}ghf^ky(x) = h^{-1}g^{k+1}y(x) = f^{k+1}y(x) = f(x),
\]
which completes the proof.
\end{proof}

\paragraphtitle{Isomorphism and conjugacy} \parlabel{par:isoconj}
Proposition~\ref{prop:permconj1} shows that two permutations with identical
equivalences are conjugate. The converse is not true as the following simple
example shows: let $f = \cycle{\dots, 6, 2, 0, 4, 8, \dots}$ and
$g = \cycle{\dots, 7, 3, 1, 5, 9, \dots}$. They define different
equivalences but are conjugate via $\cycle{0,1}\cycle{2,3}\cycle{4,5}\dots$.
However, the equivalences defined by $f$ and $g$ are ``essentially the
same'', in that they have the same block structure. This observation
suggests that studying equivalence isomorphisms leads to a better
understanding of effective conjugacy classes.

\begin{definition} $\Pi$ and $\Gamma$ be equivalences. A recursive
permutation $\theta$ is a \emph{$(\Pi, \Gamma)$-isomorphism} if $x \Pi y
\Leftrightarrow \theta(x) \Gamma \theta(y)$. Isomorphy of $\Pi$ and $\Gamma$
is written as $\Pi \cong \Gamma$.
\end{definition}

The next goal is a characterisation for the solvability of conjugation
equations $f = h^{-1}gh$ when $f, g$ have decidable cycles, and an algorithm
for computing the solution $h$ if a witness for the solvability is
available. This result extends Proposition~\ref{prop:permconj1}. To this
end, some preparations are needed.

\begin{lemma} Let $\Pi$ be a decidable equivalence, $h$ a recursive
permutation. Then $\Pi^h := h(\Pi) := \{h(P) : P \in \Pi\}$ is a decidable
partition.
\end{lemma}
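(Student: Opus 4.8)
The plan is to build an explicit decider for $\Pi^h$ by transporting the given decider for $\Pi$ along $h^{-1}$. The first thing I would record is that $h^{-1}$ is itself a recursive permutation: since $h$ is a recursive bijection, $h^{-1}(x)$ is computed as $\mu y[h(y)=x]$, a search that terminates for every $x$ because $h$ is surjective and whose predicate is decidable because $h$ is recursive. (Equivalently, one simply invokes that $\G$ is a group.) This is really the only point in the argument where anything has to be checked, and it is standard.

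Next I would verify that $\Pi^h$ is genuinely a partition of $\N$. Because $h$ is a bijection: each set $h(P)$ with $P\in\Pi$ is non-empty, since $P$ is; two sets $h(P)$, $h(P')$ coming from distinct blocks are disjoint, since $h$ is injective and $P\cap P'=\emptyset$; and $\bigcup_{P\in\Pi} h(P) = h\bigl(\bigcup_{P\in\Pi}P\bigr) = h(\N) = \N$. The same injectivity shows that $P\mapsto h(P)$ is a bijection from $\Pi$ onto $\Pi^h$, so the blocks of $\Pi^h$ are exactly the sets $h(P)$, $P\in\Pi$. The heart of the matter is then the equivalence
\[
  x \,\Pi^h\, x' \iff h^{-1}(x) \,\Pi\, h^{-1}(x'),
\]
which holds because $x$ and $x'$ lie in a common block $h(P)$ of $\Pi^h$ precisely when $h^{-1}(x)$ and $h^{-1}(x')$ lie in the common block $P$ of $\Pi$.

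Granting this, the proof concludes in one line: if $\pi$ decides $\Pi$, then the total recursive function $\lambda x x'\,\pi(h^{-1}(x), h^{-1}(x'))$, a composition of recursive functions, decides $\Pi^h$, so $\Pi^h$ is a decidable equivalence. I would also note in passing that the construction is uniform, a decider for $\Pi^h$ being obtained effectively from a decider for $\Pi$ together with a program for $h$. As indicated, there is no real obstacle here beyond the observation that inverses of recursive permutations are recursive; the rest is the routine fact that applying a bijection to a partition yields a partition and conjugates the block-membership relation in the obvious way.
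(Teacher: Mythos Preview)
Your proof is correct and follows essentially the same approach as the paper: establish the equivalence $x\,\Pi^h\,x' \Leftrightarrow h^{-1}(x)\,\Pi\,h^{-1}(x')$ and conclude that $\lambda xx'\,\pi(h^{-1}(x),h^{-1}(x'))$ is a recursive decider for $\Pi^h$. Your write-up is somewhat more detailed, spelling out why $h^{-1}$ is recursive and why $\Pi^h$ is a partition, but the argument is the same.
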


\begin{proof} Since $h$ is a permutation, $\Pi^h$ is again a partition.
Then $x \Pi^h x'$ iff $P(x, \Pi^h) = P(x', \Pi^h)$, which is the case iff
$x$ and $x'$ belong to the same $h(P)$, for some $P \in \Pi$. But this is
equivalent to $h^{-1}(x)$ and $h^{-1}(x')$ belonging to the same
$P \in \Pi$, i.e.~$h^{-1}(x) \Pi h^{-1}(x')$. Thus
$\lambda xx'[h^{-1}(x) \Pi h^{-1}(x')]$ is a recursive decider for $\Pi^h$.
\end{proof}

\begin{lemma} \label{lemma:fpi} Let $f$ be any recursive permutation. Then
$x \Pi x' \Leftrightarrow f(x) \Pi^f f(x')$.
\end{lemma}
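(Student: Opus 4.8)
The plan is to derive this directly from the block description of $\Pi^f$ already obtained in the proof of the preceding lemma; no new idea is needed, and in fact decidability of $\Pi$ plays no role — the equivalence is purely set-theoretic and holds for any partition $\Pi$ and any permutation $f$.

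First I would recall that, since $f$ is a bijection, $\Pi^f = \{f(P) : P \in \Pi\}$ is again a partition, and that two points $a, a'$ satisfy $a \Pi^f a'$ precisely when they lie in a common set $f(P)$ with $P \in \Pi$; applying $f^{-1}$ to such a set, this is equivalent to $f^{-1}(a)$ and $f^{-1}(a')$ lying in a common block $P \in \Pi$, i.e.~to $f^{-1}(a)\,\Pi\,f^{-1}(a')$. This is exactly the characterisation used in the proof of the previous lemma (with $h = f$).

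Then I would substitute $a = f(x)$ and $a' = f(x')$. Because $f$ is a bijection, $f^{-1}f(x) = x$ and $f^{-1}f(x') = x'$, so the equivalence $a \Pi^f a' \Leftrightarrow f^{-1}(a)\,\Pi\,f^{-1}(a')$ specialises to $f(x)\,\Pi^f\,f(x') \Leftrightarrow x\,\Pi\,x'$, which is the claim. There is no real obstacle; the only point deserving a moment's care is the implication ``$x \Pi x' \Rightarrow f(x)\,\Pi^f\,f(x')$'', where one needs that $f(P(x,\Pi))$ is genuinely \emph{the} block of $\Pi^f$ through $f(x)$ — that is, that $\Pi^f$ is a partition — which is precisely what the preceding lemma established from injectivity of $f$; the converse implication then only uses that $f^{-1}$ undoes $f$.
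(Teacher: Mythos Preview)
Your proof is correct and essentially the same as the paper's. Both rest on the block description $a\,\Pi^f\,a' \Leftrightarrow f^{-1}(a)\,\Pi\,f^{-1}(a')$ extracted from the preceding lemma; you substitute $a=f(x)$, $a'=f(x')$ to get both implications at once, whereas the paper observes the forward implication directly and then recovers the converse by ``repeating the argument with $f^{-1}$ in place of $f$'' --- the same content, packaged slightly differently. Your remark that decidability of $\Pi$ is irrelevant here is also accurate; the paper invokes it only to inherit from the preceding lemma that $\Pi^f$ is a partition.
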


\begin{proof} $\Pi^f$ is a decidable partition. Then clearly $x \Pi x'
\Rightarrow f(x) \Pi^f f(x')$. The converse follows by repeating the
argument with $f^{-1}$ in place of $f$.
\end{proof}

\begin{proposition} \label{prop:gammathetapi} If $\theta$ is a recursive
permutation, then it is a $(\Pi, \Gamma)$-isomorphism iff $\Gamma =
\Pi^\theta$.
\end{proposition}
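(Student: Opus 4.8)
The plan is to reduce the whole statement to Lemma~\ref{lemma:fpi} applied with $f=\theta$, together with the surjectivity of $\theta$. Recall that Lemma~\ref{lemma:fpi} gives, for the recursive permutation $\theta$, the biconditional $x \Pi x' \Leftrightarrow \theta(x)\,\Pi^\theta\,\theta(x')$ for all $x,x'$; equivalently, $\theta$ is \emph{always} a $(\Pi,\Pi^\theta)$-isomorphism. (If one prefers a self-contained derivation of this fact: $a$ and $b$ lie in a common block of $\Pi^\theta=\{\theta(P):P\in\Pi\}$ iff $\theta^{-1}(a)$ and $\theta^{-1}(b)$ lie in a common block of $\Pi$, since $\theta$ is a bijection; setting $a=\theta(x),b=\theta(x')$ yields the biconditional.)

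With this in hand, the direction ``$\Leftarrow$'' is immediate: if $\Gamma=\Pi^\theta$, substitute $\Gamma$ for $\Pi^\theta$ in the biconditional to obtain $x\Pi x'\Leftrightarrow\theta(x)\,\Gamma\,\theta(x')$, which is precisely the definition of $\theta$ being a $(\Pi,\Gamma)$-isomorphism.

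For ``$\Rightarrow$'', assume $\theta$ is a $(\Pi,\Gamma)$-isomorphism, i.e.\ $x\Pi x'\Leftrightarrow\theta(x)\,\Gamma\,\theta(x')$ for all $x,x'$. Combining this with the biconditional from Lemma~\ref{lemma:fpi} gives $\theta(x)\,\Gamma\,\theta(x')\Leftrightarrow\theta(x)\,\Pi^\theta\,\theta(x')$ for all $x,x'$. Since $\theta$ is a permutation, hence surjective, every pair $(a,b)$ of natural numbers has the form $(\theta(x),\theta(x'))$, so $a\,\Gamma\,b\Leftrightarrow a\,\Pi^\theta\,b$ for all $a,b\in\N$. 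Two equivalence relations relating exactly the same pairs have the same classes, so $\Gamma=\Pi^\theta$.

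There is no real obstacle here; the only point requiring a moment's care is bookkeeping between the two representations of an equivalence: the statement $\Gamma=\Pi^\theta$ is an equality of sets of blocks, whereas the isomorphism condition is stated via the infix relations $\equiv_\Gamma$ and $\equiv_{\Pi^\theta}$, and one passes between them through the partition/equivalence-relation correspondence recorded at the start of \parref{par:decequiv}. Note that decidability of $\Pi$ or $\Gamma$ plays no role in the argument beyond whatever is needed to quote Lemma~\ref{lemma:fpi}; the content is purely that $\theta$, being a bijection, pushes $\Pi$ forward to $\Pi^\theta$ and that an isomorphism onto $\Gamma$ can land on no other partition.
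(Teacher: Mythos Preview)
Your proof is correct and follows essentially the same route as the paper's: both directions rest on Lemma~\ref{lemma:fpi} applied with $f=\theta$, and the forward direction uses bijectivity of $\theta$ to pass from the pairs $(\theta(x),\theta(x'))$ to arbitrary pairs (the paper writes this via $\theta^{-1}$, you via surjectivity; these are the same move).
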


\begin{proof} ``$\Rightarrow$'': If $\theta$ is an isomorphism, we have
$y \Gamma y' \Leftrightarrow \theta^{-1}(y) \Pi \theta^{-1}(y')
\Leftrightarrow y \Pi^\theta y'$ according to Lemma~\ref{lemma:fpi}. Thus
$\Gamma = \Pi^\theta$.

``$\Leftarrow$'': $x \Pi x' \Leftrightarrow \theta(x) \Pi^\theta \theta(x')
\Leftrightarrow \theta(x) \Gamma \theta(x')$.
\end{proof}

The set of $(\Pi, \Pi)$-isomorphisms is the \emph{automorphism group} of
$\Pi$, denoted $\Aut \Pi$. This is indeed a group under composition (a
subgroup of $\G$) as $x \Pi x' \Leftrightarrow \id(x) \Pi \id(x')$, hence
$\id \in \Aut \Pi$. If $f, g \in \Aut \Pi$, then by
Proposition~\ref{prop:gammathetapi}: $\Pi^{fg} = (\Pi^g)^f = \Pi^f = \Pi$
and $\Pi^{f^{-1}} = (\Pi^f)^{f^{-1}} = \Pi^{f^{-1}f} = \Pi$ so that
$fg, f^{-1} \in \Aut \Pi$.

\begin{lemma} \label{lemma:fxequivx'} Let $\Pi$ be decidable and $f \in \G$.
Consider the predicate
\begin{align}
  \forall x, x' : x \Pi x' \Leftrightarrow f(x) \Pi x'. \tag{$*$} \label{eq:fxequivx'}
\end{align}
Then $f \in \Perm \Pi \Rightarrow \eqref{eq:fxequivx'} \Rightarrow
f \in \Aut\Pi$. In particular $\Perm \Pi \subseteq \Aut \Pi$.
\end{lemma}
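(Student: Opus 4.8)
The plan is to prove the chain of implications $f \in \Perm \Pi \Rightarrow \eqref{eq:fxequivx'} \Rightarrow f \in \Aut \Pi$ directly, using the fact that the blocks of $\Pi$ are exactly the cycles of $f$ in the first case, and Lemma~\ref{lemma:fpi} together with Proposition~\ref{prop:gammathetapi} in the second. The final inclusion $\Perm \Pi \subseteq \Aut \Pi$ is then immediate by composing the two implications.

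For the first implication, suppose $f \in \Perm \Pi$, so that $P(x, \Pi) = [x]_f$ for every $x$. Then $x \Pi x'$ holds iff $x$ and $x'$ lie in the same cycle of $f$, i.e.\ iff $x' = f^k(x)$ for some $k \in \Z$. Since $f(x)$ always lies in the same cycle as $x$, we get $x \Pi f(x)$ for every $x$; hence $x \Pi x'$ iff $f(x) \Pi x'$ by transitivity and symmetry of $\Pi$, which is exactly \eqref{eq:fxequivx'}. This step is short and uses only the defining property of $\Perm \Pi$.

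For the second implication, assume \eqref{eq:fxequivx'} holds. I want to show $\Pi^f = \Pi$, because then Proposition~\ref{prop:gammathetapi} (with $\Gamma = \Pi = \Pi^f$) gives that $f$ is a $(\Pi,\Pi)$-isomorphism, i.e.\ $f \in \Aut \Pi$. By Lemma~\ref{lemma:fpi} we have $x \Pi x' \Leftrightarrow f(x) \Pi^f f(x')$. On the other hand, iterating \eqref{eq:fxequivx'} once more in the first variable, $x \Pi x' \Leftrightarrow f(x) \Pi x' \Leftrightarrow f(x) \Pi f(x')$ (applying \eqref{eq:fxequivx'} to the pair $(x', f(x))$ after a symmetry step, or more cleanly: \eqref{eq:fxequivx'} says $f(x) \Pi x$ for all $x$, so $x \Pi x'$ iff $f(x) \Pi x$ and $x \Pi x'$ and $x' \Pi f(x')$, i.e.\ iff $f(x) \Pi f(x')$). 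Comparing the two equivalences, $f(x) \Pi^f f(x') \Leftrightarrow f(x) \Pi f(x')$ for all $x, x'$; since $f$ is a permutation, $f(x)$ and $f(x')$ range over all pairs, so $\Pi^f = \Pi$ as relations, and we conclude $f \in \Aut \Pi$.

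The main thing to be careful about is not a genuine obstacle but a bookkeeping point: extracting ``$f(x) \Pi x$ for all $x$'' from \eqref{eq:fxequivx'} (take $x' = x$ and use reflexivity $x \Pi x$), and then using symmetry and transitivity of $\Pi$ freely to rewrite $x \Pi x'$ into $f(x) \Pi f(x')$. Once that reformulation is in hand, the identification $\Pi^f = \Pi$ via Lemmas~\ref{lemma:fpi} and Proposition~\ref{prop:gammathetapi} is routine. Finally, $\Perm \Pi \subseteq \Aut \Pi$ follows by chaining the two implications, completing the proof.
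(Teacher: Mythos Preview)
Your proof is correct and follows essentially the same route as the paper: for the first implication you use $x \Pi f(x)$ plus transitivity and symmetry (the paper phrases the reverse direction via $f^{-1} \in \Perm \Pi$, but your argument is equivalent and arguably cleaner), and for the second you apply \eqref{eq:fxequivx'} twice with a symmetry step to get $x \Pi x' \Leftrightarrow f(x) \Pi f(x')$, then invoke Lemma~\ref{lemma:fpi} and Proposition~\ref{prop:gammathetapi} exactly as the paper does.
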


\begin{proof} If $f \in \Perm \Pi$, it follows that $x \Pi f(x)$. Together
with $x' \Pi x$, this implies $x' \Pi f(x)$. With $f \in \Perm \Pi$ is also
$f^{-1} \in \Perm \Pi$ and thus $f(x) \Pi x' \Rightarrow
x = f^{-1}f(x) \Pi x'$.

Now assume $f \in \G$ and \eqref{eq:fxequivx'} holds. By symmetry of $\Pi$
and twofold application of \eqref{eq:fxequivx'}, we obtain $x \Pi x'
\Leftrightarrow f(x) \Pi f(x')$, i.e.~$\Pi = \Pi^f$ by
Lemma~\ref{lemma:fpi}. Then it follows that $f \in \Aut \Pi$ by
Proposition~\ref{prop:gammathetapi}.
\end{proof}

With Lemma~\ref{lemma:fxequivx'} we are in a position to approach the
following problem: given $f \in \Perm \Pi$, $\Pi$ decidable, and $h$ a
recursive permutation, a member of $\Perm \Pi^h$ is to be described. If we
assume that there is some $g \in \Perm \Pi^h$, it would satisfy the relation
$g(y) \Pi^h y' \Leftrightarrow y \Pi^h y' \Leftrightarrow
h^{-1}(y) \Pi h^{-1}(y') \Leftrightarrow fh^{-1}(y) \Pi h^{-1}(y')
\Leftrightarrow hfh^{-1}(y) \Pi^h y'$, by Lemmata~\ref{lemma:fxequivx'}
and~\ref{lemma:fpi}. This shows that, at least under the assumption that
$\Pi^h$ is permutable, $hfh^{-1} \in \Aut \Pi^h$. The following proposition
strengthens this deduction in two ways: indeed $hfh^{-1} \in \Perm \Pi^h$
and this is independent of the assumption that $\Pi^h$ is permutable. A
technical lemma is needed, which is obvious from the definition of $\Perm
\Pi$:

\begin{lemma} \label{lemma:permcrit} $f \in \Perm \Pi$ iff $f \in \G$ and
for all $x_0$ the function $\Z \ni k \mapsto f^k(x_0)$ is surjective on
$P(x_0, \Pi)$. \qed
\end{lemma}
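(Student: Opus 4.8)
The plan is to unwind the definition $\Perm \Pi = \{f \in \G : \forall x:\ P(x,\Pi) = [x]_f\}$ directly; the only point that is not pure book-keeping is the identification of the orbit $[x_0]_f$ with the image of the integer-power map $\Z \ni k \mapsto f^k(x_0)$.

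First I would record that for any permutation $f$ and any $x_0 \in \N$ one has $[x_0]_f = \{f^k(x_0) : k \in \Z\}$: the powers are pairwise distinct when the cycle is infinite and the exponents wrap around modulo the cycle length when it is finite, but in both cases the image of $k \mapsto f^k(x_0)$ is exactly the cycle through $x_0$. Consequently the map $k \mapsto f^k(x_0)$, regarded as a map into the block $P(x_0,\Pi)$, is well defined and surjective precisely when $[x_0]_f = P(x_0,\Pi)$: being a map into $P(x_0,\Pi)$ forces $[x_0]_f \subseteq P(x_0,\Pi)$, and surjectivity forces $P(x_0,\Pi) \subseteq [x_0]_f$.

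With this observation both implications are immediate. If $f \in \Perm \Pi$, then $f \in \G$ and $P(x_0,\Pi) = [x_0]_f$ for every $x_0$, so $k \mapsto f^k(x_0)$ is surjective on $P(x_0,\Pi)$ for every $x_0$. Conversely, if $f \in \G$ and $k \mapsto f^k(x_0)$ is surjective on $P(x_0,\Pi)$ for every $x_0$, then $[x_0]_f = P(x_0,\Pi)$ for every $x_0$, which is exactly the condition defining membership in $\Perm \Pi$. There is no genuine obstacle here; the one place warranting care is the reading of ``surjective on $P(x_0,\Pi)$'' as asserting both that the orbit lies inside the block and that it fills it, i.e.\ as the statement that $k \mapsto f^k(x_0)$ is a surjection with codomain $P(x_0,\Pi)$ rather than merely a map whose range meets $P(x_0,\Pi)$.
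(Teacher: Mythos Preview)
Your proposal is correct and matches the paper's approach exactly: the paper states the lemma with an immediate \qed, having introduced it as ``obvious from the definition of $\Perm \Pi$,'' and your argument simply spells out that obviousness by identifying $[x_0]_f$ with $\{f^k(x_0):k\in\Z\}$ and reading off both inclusions. Your closing remark about the intended reading of ``surjective on $P(x_0,\Pi)$'' is a fair caution, but the paper clearly intends the codomain reading you adopt.
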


\begin{proposition} \label{prop:memberhpi} Let $f \in \Perm \Pi$ and $h \in
\G$, then $hfh^{-1} \in \Perm \Pi^h$.
\end{proposition}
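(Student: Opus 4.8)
The plan is to reduce everything to the cycle-surjectivity criterion of Lemma~\ref{lemma:permcrit}. Write $g := hfh^{-1}$. Since $\G$ is a group, $g \in \G$, and by the lemma preceding Lemma~\ref{lemma:fpi} the partition $\Pi^h$ is decidable; so the only thing left to check is that for every $y_0 \in \N$ the map $\Z \ni k \mapsto g^k(y_0)$ is surjective onto the block $P(y_0, \Pi^h)$.

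First I would record the elementary identity $g^k = hf^kh^{-1}$, valid for all $k \in \Z$, so that for a fixed $y_0$, setting $x_0 := h^{-1}(y_0)$, we get $g^k(y_0) = hf^k(x_0)$ for all $k$. Next I would invoke $f \in \Perm \Pi$: by Lemma~\ref{lemma:permcrit} the set $\{f^k(x_0) : k \in \Z\}$ is exactly $P(x_0, \Pi)$. Applying the bijection $h$ to both sides yields $\{g^k(y_0) : k \in \Z\} = h(P(x_0,\Pi))$.

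It then remains to identify $h(P(x_0, \Pi))$ with the block $P(y_0, \Pi^h)$. By definition $\Pi^h = \{h(P) : P \in \Pi\}$ and, by the cited lemma, this is genuinely a partition; since $y_0 = h(x_0)$ lies in $h(P(x_0,\Pi)) \in \Pi^h$ and blocks of a partition are disjoint, $h(P(x_0,\Pi))$ is \emph{the} block of $\Pi^h$ containing $y_0$, i.e. $P(y_0,\Pi^h)$. Combining, $\{g^k(y_0) : k \in \Z\} = P(y_0, \Pi^h)$, which is the required surjectivity. As $y_0$ was arbitrary, Lemma~\ref{lemma:permcrit} gives $g = hfh^{-1} \in \Perm \Pi^h$.

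There is no real obstacle here: the statement is essentially the observation that conjugation by $h$ carries the cycles of $f$ to their $h$-images, which are precisely the blocks of $\Pi^h$. The only point requiring a little care is the last step — that $h(P(x_0,\Pi))$ is the \emph{unique} block of $\Pi^h$ through $y_0$ — which is exactly why we need $\Pi^h$ to be a partition, as supplied by the earlier lemma.
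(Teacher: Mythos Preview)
Your proof is correct and follows essentially the same route as the paper: both reduce to Lemma~\ref{lemma:permcrit}, use the identity $(hfh^{-1})^k = hf^kh^{-1}$ together with $f \in \Perm \Pi$ to compute the orbit of $y_0$, and identify the result with the block $P(y_0,\Pi^h)$. The only cosmetic difference is that the paper argues element-wise (pick $y$ in the block, pull it back via $h^{-1}$ using Lemma~\ref{lemma:fpi}, find $k$; then separately check the orbit stays in the block), whereas you compute the orbit as a set $h(P(x_0,\Pi))$ in one stroke and identify it with $P(y_0,\Pi^h)$ directly from the definition of $\Pi^h$ --- same content, slightly tighter packaging.
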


\begin{proof} Let $y_0 = h(x_0)$ and $y = h(x) \in P(y_0, \Pi^h)$ be given.
Then $y_0 \Pi^h y$ implies $x_0 \Pi x$ by Lemma~\ref{lemma:fpi}. Because
$f \in \Perm \Pi$, we can find $k$ so that $f^k(x_0) = x$. Now
$(hfh^{-1})^k(y_0) = hf^kh^{-1}(y_0) = hf^k(x_0) = h(x) = y$. It remains to
show that $hf^kh^{-1}(y_0)$ is in $P(y_0, \Pi^h)$ for every $k$. Because
$f \in \Perm \Pi$, it is $f^k(x_0) \in P(x_0, \Pi)$ and thus $hf^k(x_0) \in
P(y_0, \Pi^h)$ by definition of $\Pi^h$. Lemma~\ref{lemma:permcrit} shows
that $hfh^{-1} \in \Perm \Pi^h$.
\end{proof}

We remark that a more satisfying proof of this proposition can be given with
the notions introduced in \parref{par:orderperm}. It is not difficult to
prove that the condition \eqref{eq:fxequivx'} is characteristic for the set
$\I\Pi$ which lies between $\Perm \Pi$ and $\Aut \Pi$ and is equipped with a
preorder in \parref{par:orderperm}. The mapping $f \mapsto hfh^{-1}$ is an
order isomorphism $\I\Pi \to \I\Pi^h$ and it is shown that $\Perm \Pi$ is
the set of upper bounds on $\I\Pi$. Since an order isomorphism preserves
upper bounds, it follows that $hfh^{-1}$ is an upper bound on $\I\Pi^h$
which immediately yields $hfh^{-1} \in \Perm \Pi^h$.

We have remarked before the proof of Proposition~\ref{prop:permconj1} that
conveniently the permutations $f$ and $g$ for a decidable partition $\Pi$
have not only the same cycle type and that the cycles are paired already
according to their length but the cycles of $f$ and $g$ to which an $x$
belongs contain the same elements. Replacing identity of cycles with
isomorphy yields a characterisation of conjugacy for members of $\Perm$:

\begin{theorem} \label{theorem:permconj2} Let $f, g$ be recursive
permutations for decidable partitions $\Pi, \Gamma$ respectively. Then $f$
and $g$ are effectively conjugate iff $\Pi$ and $\Gamma$ are isomorphic. Put
another way:
\[
  \forall f, g \in \Perm: f \sim g \Leftrightarrow \Part f \cong \Part g.
\]
If a decider for one of the equivalences is known, a conjugation between $f$
and $g$ can be obtained uniformly effectively from an isomorphism of the
equivalences and vice versa.
\end{theorem}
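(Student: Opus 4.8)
The plan is to prove both directions by reducing to the already-established machinery, in particular Proposition~\ref{prop:permconj1}, Proposition~\ref{prop:memberhpi}, and Proposition~\ref{prop:gammathetapi}. For the direction ``$\Pi \cong \Gamma \Rightarrow f \sim g$'': given a $(\Pi,\Gamma)$-isomorphism $\theta$, Proposition~\ref{prop:gammathetapi} tells us $\Gamma = \Pi^\theta$. Since $f \in \Perm\Pi$, Proposition~\ref{prop:memberhpi} gives $\theta f \theta^{-1} \in \Perm\Pi^\theta = \Perm\Gamma$. Now both $\theta f \theta^{-1}$ and $g$ lie in $\Perm\Gamma$, so Proposition~\ref{prop:permconj1} yields a recursive permutation $h'$ with $g = (h')^{-1}(\theta f \theta^{-1})h'$, and then $h := \theta^{-1} h'$ witnesses $g = h^{-1} f h$, i.e.\ $f \sim g$. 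Unwinding this, $h$ is computed from $\theta$ (hence a decider for $\Gamma$, say) and programs for $f, g$ by composing $\theta^{-1}$ with the conjugation produced by Proposition~\ref{prop:permconj1}; all steps are uniformly effective as claimed.

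For the converse ``$f \sim g \Rightarrow \Pi \cong \Gamma$'': suppose $g = h^{-1} f h$ with $h \in \G$. Apply Proposition~\ref{prop:memberhpi} with $h^{-1}$ in place of $h$ (and $f$ as the member of $\Perm\Pi$) to get $h^{-1} f h \in \Perm\Pi^{h^{-1}}$, that is $g \in \Perm\Pi^{h^{-1}}$. But also $g \in \Perm\Gamma$ by hypothesis, and a recursive permutation cannot lie in $\Perm\Pi_1$ and $\Perm\Pi_2$ for distinct partitions, since the partition is recovered from the cycles of $g$: $\Part g$ is determined by $g$, so $\Gamma = \Part g = \Pi^{h^{-1}}$. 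By Proposition~\ref{prop:gammathetapi}, the permutation $h^{-1}$ is then a $(\Pi,\Gamma)$-isomorphism, so $\Pi \cong \Gamma$. Effectively, the isomorphism is just (a program for) $h^{-1}$, obtained from the conjugation $h$; conversely, as shown above, a conjugation is obtained from an isomorphism, so the two data are uniformly interconvertible once a decider for one equivalence is fixed.

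The main obstacle is largely bookkeeping rather than a genuine mathematical difficulty: one must be careful that Proposition~\ref{prop:memberhpi} is being invoked with the correct permutation and the correct conjugating element in each direction (the statement is $hfh^{-1} \in \Perm\Pi^h$, so matching ``$h^{-1}gh$''-style conjugation to ``$hfh^{-1}$''-style requires renaming), and that the identity $\Gamma = \Part g$ is legitimate — this uses only that $g$, being a permutation, has a well-defined cycle partition and that $g \in \Perm\Gamma$ forces that partition to equal $\Gamma$, which is immediate from the definition of $\Perm\Gamma$. The effectiveness claims then follow by tracking which programs feed into Proposition~\ref{prop:permconj1}: in the forward direction a decider for $\Gamma$ (computable from $\theta$ via the earlier representation results, or assumed given) plus programs for $g$ and $\theta f\theta^{-1}$ suffice to produce $h'$, hence $h$; in the backward direction nothing beyond $h$ itself is needed to name the isomorphism.
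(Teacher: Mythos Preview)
Your proof is correct and follows essentially the same approach as the paper's: both directions are reduced to Propositions~\ref{prop:gammathetapi}, \ref{prop:memberhpi}, and \ref{prop:permconj1} in the same way. The only cosmetic difference is that in the ``$\Leftarrow$'' direction you transport $f$ into $\Perm\Gamma$ via $\theta f\theta^{-1}$ whereas the paper transports $g$ into $\Perm\Pi$ via $\theta^{-1}g\theta$, and your conjugacy is written $g = h^{-1}fh$ rather than the paper's $f = h^{-1}gh$; these are symmetric choices with no mathematical content.
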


\begin{proof}
``$\Rightarrow$'': Let $f = h^{-1}gh$ for some recursive permutation $h$.
We claim that $h$ is a $(\Pi, \Gamma)$-isomorphism. By
Proposition~\ref{prop:gammathetapi} this is equivalent to the statement
$\Pi^h = \Gamma$. By Proposition~\ref{prop:memberhpi} we have $f = h^{-1}gh
\in \Perm \Gamma^{h^{-1}}$, but also $f \in \Perm \Pi$. Thus the blocks of
$\Pi$ are the cycles of $f$ which are also the blocks of $\Gamma^{h^{-1}}$.
This implies $\Pi = \Gamma^{h^{-1}}$ and since $h$ is a permutation,
$\Pi^h = \Gamma$.

``$\Leftarrow$'': Let $\theta$ be a
$(\Pi, \Gamma)$-isomorphism. Then $\theta^{-1}g\theta$ is in $\Perm \Pi$
because by Proposition~\ref{prop:memberhpi}: $\theta^{-1}g\theta \in \Perm
\Gamma^{\theta^{-1}}$ and $\Gamma^{\theta^{-1}} = \Pi$ by
Proposition~\ref{prop:gammathetapi}. By virtue of
Proposition~\ref{prop:permconj1} there is a recursive permutation $h$ such
that $f = h^{-1}\theta^{-1}g\theta h$, thus $f$ and $g$ are effectively
conjugate via $\theta h$.
\end{proof}

In view of permutations $f, g \in \Perm$, the isomorphy of their respective
equivalences $\Part f \cong \Part g$ is also called \emph{effective cycle
type equality}. From Theorem~\ref{theorem:permconj2} it follows that if $f$
has decidable cycles and $\Pi$ is decidable and permutable, then
\begin{gather*}
  [f]_\sim = \bigcup_{\Gamma \cong \Part f} \Perm \Gamma, \\
  [\Pi]_{\cong} = \{\Part g : \exists f \in \Perm \Pi : g \sim f\},
\end{gather*}
and using the former equation
\begin{align*}
  \Perm = \bigcup_{\Pi \; \text{dec.}} \Perm \Pi
        = \bigcup_{\Part f \; \text{dec.}} [f]_\sim.
\end{align*}
Since $\Perm$ is a union of conjugacy classes, we obtain

\begin{corollary} \label{coro:permnormal} $\Perm$ is a normal subset of the
group of recursive permutations. \qed
\end{corollary}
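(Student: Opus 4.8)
The plan is to observe that the corollary is an immediate consequence of the structural identities derived just above its statement. By definition a subset $S \subseteq \G$ is \emph{normal} if $h^{-1} S h = S$ for every $h \in \G$, equivalently if $S$ is a union of $\sim$-classes. The displayed equation $\Perm = \bigcup_{\Part f \ \text{dec.}} [f]_\sim$ already exhibits $\Perm$ in exactly this form, so the corollary follows once we check that each set $[f]_\sim$ occurring in the union is genuinely contained in $\Perm$; that is, that conjugating a member of $\Perm$ by a recursive permutation never leaves $\Perm$.

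For that closure property I would argue directly. Let $f \in \Perm$ and $h \in \G$, and pick a decidable partition $\Pi$ with $f \in \Perm \Pi$. Applying Proposition~\ref{prop:memberhpi} with $h^{-1}$ in place of $h$ gives $h^{-1} f h \in \Perm \Pi^{h^{-1}}$, and the lemma asserting that $\Pi^{g}$ is a decidable partition for every recursive permutation $g$ shows that $\Pi^{h^{-1}}$ is decidable. Hence $h^{-1} f h \in \Perm$. Since $h$ was arbitrary, $h^{-1}\Perm h \subseteq \Perm$ for all $h \in \G$, and replacing $h$ by $h^{-1}$ yields the reverse inclusion, so $h^{-1}\Perm h = \Perm$.

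There is essentially no obstacle here: the substance was supplied by Proposition~\ref{prop:memberhpi} and the preceding lemma. The only point requiring a moment's care is matching the side on which conjugation acts (Proposition~\ref{prop:memberhpi} is phrased with $hfh^{-1}$ whereas the definition of $\sim$ uses $h^{-1}gh$), which is harmless since $h$ ranges over the whole group. Alternatively, a one-line argument suffices: $h^{-1} f h \sim f$ and $\Part f$ is decidable, so $h^{-1} f h \in [f]_\sim \subseteq \Perm$ by the last displayed equation preceding the corollary.
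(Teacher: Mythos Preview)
Your proposal is correct and follows the paper's approach: the paper simply points to the displayed equation $\Perm = \bigcup_{\Part f \ \text{dec.}} [f]_\sim$ and concludes immediately that $\Perm$ is a union of conjugacy classes. Your second paragraph merely makes explicit the closure-under-conjugation step (via Proposition~\ref{prop:memberhpi} and the preceding lemma on decidability of $\Pi^h$) that the paper already used implicitly in deriving that displayed equation, so the substance is identical.
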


\section{Characterisations of $\Perm$} \label{sec:charperm}

The definition of $\Perm$ establishes a relation between the permutations of
$\N$, $\Sym \N$, and the equivalence relations over $\N$, $\Eqv \N$.
Regarding the properties of ``recursiveness'' and ``decidability'', this
relation is non-trivial, as each of the four cases in
Figure~\ref{fig:symeqv} is possible. From this relation arise two
approaches to characterise $\Perm$. The first starts in the ``recursive
permutation'' column and asks when the permutation has decidable cycles and
the second starts in the ``decidable equivalence'' row asks when the
equivalence is permutable. The global questions whether every recursive
permutation has decidable cycles and whether all decidable equivalences are
permutable are both answered in the negative, as referenced in
Figure~\ref{fig:symeqv}. This section follows these two directions and
also gives an order-theoretic characterisation of $\Perm$.

\begin{figure}
\bgroup
\def\arraystretch{1.3}
\begin{tabular}{c|c|c}
\diagbox{$\Eqv \N$}{$\Sym \N$} & recursive & non-recursive \\
\hline
decidable & $\Perm$ & Proposition~\ref{prop:nonpermut} \\
\hline
undecidable & Lemma~\ref{lemma:gminusperm} & $\Pi = \{K, \overbar K\}$
\end{tabular}
\egroup
\caption{All possibilities for the relation of recursiveness of
permutations and decidability of equivalences are realisable. The
equivalence $\Pi = \{K, \overbar K\}$, with $K$ the Halting Problem, is
undecidable because its blocks are undecidable. Any permutation which has
$\overbar K$ as a cycle is not recursive because the cycles of a recursive
permutation are recursively enumerable.}
\label{fig:symeqv}
\end{figure}

\paragraphtitle{Cycle decidability} \parlabel{par:cycledec} This subsection
gives cycle decidability criteria, i.e.~how must a recursive permutation
be designed so that its cycles induce a decidable equivalence. There is a
sufficient condition which is practically useful because it only requires
some information from the cycle type of $f$:

\begin{lemma} \label{lemma:oneinfdec} If $f$ is a recursive permutation with
at most one infinite cycle, then $\Pi = \Part f$ is decidable. A decider for
$\Pi$ can be computed from $f$.
\end{lemma}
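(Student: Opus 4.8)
The plan is to give an explicit semi-decision procedure for both $x \equiv_f x'$ and its negation, which together yield a decider. The point is that membership in a common cycle — i.e. the existence of $k \in \Z$ with $f^k(x) = x'$ — is always recursively enumerable via the alternating search $\xi k[f^k(x) = x']$ of Definition~\ref{def:delta}. The difficulty is confirming \emph{non-}relatedness: if $x$ and $x'$ lie in distinct cycles, the search never halts, and a priori we have no way to certify this. The hypothesis ``at most one infinite cycle'' is exactly what lets us certify it.

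First I would fix $x, x'$ and run two processes in parallel (by dovetailing). Process~A computes $\xi k[f^k(x) = x']$; if it halts, output $\fr t$. Process~B tries to \emph{exhaust} the cycle of $x$: it computes $x, f(x), f^2(x), f^3(x), \dots$ and, after each new value $f^n(x)$, checks whether $f^{n}(x) = x$ (equivalently whether $f^{-1}$ of the starting point has been reached), i.e.~whether the forward orbit has closed up. Wait — more carefully: Process~B enumerates the orbit $\{f^k(x): k \in \Z\}$ by alternating $x, f(x), f^{-1}(x), f^2(x), f^{-2}(x), \dots$, maintaining the finite set $S_n$ of values seen so far; it halts and declares ``the cycle of $x$ is finite, equal to $S_n$'' as soon as it detects that applying $f$ and $f^{-1}$ to every element of $S_n$ produces no new element (which must eventually happen iff $[x]_f$ is finite). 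When Process~B halts with the finite cycle $S$ of $x$, we can decide $x \equiv_f x'$ outright: it holds iff $x' \in S$; output $[x' \in S]$.

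The remaining case is that neither process halts. I claim this can only happen when \emph{both} $x$ and $x'$ lie on infinite cycles, and since $f$ has at most one infinite cycle, they then lie on the \emph{same} cycle — but that contradicts Process~A never halting. Concretely: run, again by dovetailing, a third process that performs the Process~B-style orbit exhaustion simultaneously on \emph{both} $x$ and $x'$; whichever of the two orbits is finite will be detected, and:
\begin{compactenum}[\hskip 2em(i)]
\item if the orbit of $x$ is found finite, decide via $[x' \in [x]_f]$;
\item if the orbit of $x'$ is found finite, decide via $[x \in [x']_f]$;
\item if Process~A halts first, output $\fr t$.
\end{compactenum}
One of (i)--(iii) must occur: if neither orbit is finite then both are infinite, hence (as $f$ has at most one infinite cycle) they coincide, so $f^k(x) = x'$ for some $k$ and Process~A halts. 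Since the whole construction is a finite combination of dovetailed recursive searches driven by a G\"odel number for $f$, the resulting decider $\pi$ for $\Pi = \Part f$ is obtained uniformly effectively from $f$, which is the asserted effectiveness.

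The main obstacle is the third process's termination argument — specifically, making precise that a cycle is finite \emph{iff} the orbit-closure test succeeds in finitely many steps, and then arguing that the ``neither halts'' branch is genuinely empty under the one-infinite-cycle hypothesis. Everything else is routine dovetailing and appeal to the Church--Turing Thesis.
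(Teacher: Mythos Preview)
Your proof is correct and rests on the same key observation as the paper's: if $x \not\equiv_f x'$ then at least one of the two cycles is finite and can be exhausted, while if $x \equiv_f x'$ the direct search for a connecting power succeeds. The paper packages this more compactly into a single $\mu$-search $i = \mu i[i \ge 1 \wedge \{f^i(x), f^i(x')\} \cap \{x, x'\} \not= \emptyset]$ over forward powers only, reporting $\fr t$ iff the witnessed equality is a cross-equality $f^i(x) = x'$ or $f^i(x') = x$; your dovetailing of three separate processes is a more explicit but equivalent implementation of the same idea.
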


\begin{proof} Let $x, x'$ be given. If $x = x'$, the case is clear and we
report $\pi(x, x') := \fr t$. In the other case, search
\[
  i = \mu i[i \ge 1 \wedge \{f^i(x), f^i(x')\} \cap \{x, x'\} \not= \emptyset].
\]
This search always terminates:
\begin{enumerate}[(i)]
\item If both $x$ and $x'$ are in an infinite cycle, they must be in the
same cycle. Because $x \not= x'$ at this point, there is an $i \ge 1$ such
that $f^i(x) = x'$ or $f^i(x') = x$, whereas $f^i(x) = x$ or $f^i(x') = x'$
is impossible for $i \ge 1$.
\item If they are in the same finite cycle, we will find $f^i(x) = x'$ or
$f^i(x') = x$ \emph{before} $f^i(x) = x$ or $f^i(x') = x'$ can be
encountered.
\item If they are in different cycles $f^i(x) = x' \vee f^i(x') = x$ is
unsatisfiable. But at least one of them must be in a finite cycle and thus
$f^i(x) = x$ or $f^i(x') = x'$ will be found.
\end{enumerate}
These are all the cases. If $f^i(x) = x'$ or $f^i(x') = x$, report
$\pi(x, x') := \fr t$ and else $\pi(x, x') := \fr f$. The function $\pi$
decides the cycles of $f$ and can be computed uniformly effectively from
$f$.
\end{proof}

Lemma~\ref{lemma:oneinfdec} can be generalised to the case of finitely many
infinite cycles, but the constructed decider is not uniform in $f$ anymore.
The construction requires a system of representatives for the infinite
cycles. For this reason, it is given as a separate proposition. This result
can already be found in Myhill's paper \cite[Cor.~1]{myhill59} on
\emph{splinters} in digraphs of recursive functions, which generalise cycles
in permutations. Myhill's corollary implies that all cycles of a permutation
are recursive sets if there are only finitely many infinite cycles. As seen
in \parref{par:decequiv}, recursivity of all blocks of a partition does not
suffice for (uniform) decidability of the partition. That addition, however,
follows easily from his Theorem~1.3 and our characterisation of cycle
decidability via transversals in Theorem~\ref{theorem:cycledecchar} below.
We give an independent proof here:

\begin{proposition} \label{prop:fininfdec} If $f$ is a recursive permutation
with finitely many infinite cycles, then $\Pi = \Part f$ is decidable. Using
the definition from the introduction: $\G_1 \subseteq \Perm$.
\end{proposition}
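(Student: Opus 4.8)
The plan is to bootstrap the argument of Lemma~\ref{lemma:oneinfdec} with a finite system of representatives for the infinite cycles. Fix one element $c_1, \dots, c_k$ from each infinite cycle of $f$; such a finite list exists by hypothesis. Note that there is in general no way to compute this list --- or even the number $k$ --- from a program for $f$, which is exactly why the decider produced below is not uniform in $f$.

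The heart of the construction is a subroutine that determines the \emph{cycle type} of a given number $x$, meaning either the information that $[x]_f$ is finite, or the index $j \in \{1, \dots, k\}$ with $x \equiv_f c_j$. To compute it, dovetail the following $k+1$ searches: for each $j \le k$, the alternating search $\xi m[f^m(c_j) = x]$ of Definition~\ref{def:delta}, and additionally the search $\mu i[i \ge 1 \wedge f^i(x) = x]$. I claim that in every case exactly one of these halts. If $x$ lies in the infinite cycle of $c_j$, then $x \equiv_f c_j$ provides some $m \in \Z$ with $f^m(c_j) = x$, so the $j$-th alternating search halts; the searches for indices $j' \ne j$ diverge because $c_{j'}$ lies in a different cycle, and the last search diverges because $[x]_f$ is infinite. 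If instead $[x]_f$ is finite, the last search halts at the cycle length, while every alternating search diverges since $x$ is not $f$-equivalent to any $c_j$, all of which lie in infinite cycles. Thus running the $k+1$ searches in parallel and reading off the outcome of whichever halts first is a total recursive procedure.

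Given this subroutine, deciding $x \equiv_f x'$ reduces to a case distinction on the cycle types of $x$ and $x'$. If both are infinite, witnessed by indices $j$ and $j'$, report $\fr t$ iff $j = j'$. If exactly one of the two is infinite, report $\fr f$. If both are finite, compute the length $L := \mu i[i \ge 1 \wedge f^i(x) = x]$ of $[x]_f$ and report $\fr t$ iff $x' \in \{x, f(x), \dots, f^{L-1}(x)\}$, which is decidable because the finite cycle of $x$ is listed explicitly. Correctness in each branch is immediate from the fact that $c_1, \dots, c_k$ meet every infinite cycle exactly once, so the resulting $\pi$ decides $\Part f$; the inclusion $\G_1 \subseteq \Perm$ then follows at once.

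I expect the only delicate point to be the correctness of the cycle-type subroutine, namely the verification that exactly one of the $k+1$ dovetailed searches halts in each of the two scenarios; this rests entirely on $c_1, \dots, c_k$ being representatives of pairwise distinct cycles that together exhaust the infinite cycles of $f$. The remaining bookkeeping is routine.
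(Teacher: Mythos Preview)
Your proof is correct and takes essentially the same approach as the paper: both fix a finite system of representatives for the infinite cycles and use search to classify each input either as belonging to one of those cycles or as lying in a finite cycle, then compare. The paper organises the search slightly differently---a single alternating search from $x$ targeting the set $\{x,x',c_1,\dots,c_k\}$ rather than your $k+1$ dovetailed searches---but the key idea and the source of non-uniformity are identical.
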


\begin{proof} Let $x_1, \dots, x_n$ be a \emph{system of representatives}
for the infinite cycles, i.e.~each $x_i$ belongs to an infinite cycle, no
two of them belong to the same, and every infinite cycle has a
representative among the $x_i$'s. Given $x, x'$ we calculate indices $k,
k'$ respectively. If $x = x'$, set $k = k' = 0$. If $x = x_i$, then set
$k = 0$, and analogously for $x'$ and $k'$. Otherwise $k$ is determined by
alternating search of $f^k(x)$ in the set $\{x, x', x_1, \dots, x_n\}$:
$k = \xi k[k \not= 0 \wedge f^k(x) \in \{x, x', x_1, \dots, x_n\}]$ and
$k'$ analogously by searching for $f^{k'}(x')$ in the same set. This
procedure terminates because $x$ is either in a finite cycle (and will
find itself eventually with an index $k \not= 0$) or in an infinite cycle
(and will find one of the representatives for infinite cycles with an index
$k \not= 0$).

Once $k, k'$ are computed, let $y = f^k(x), y' = f^{k'}(x')$ be the found
elements. Note that $y, y' \in \{x, x', x_1, \dots, x_n\}$. If $y = x'$ or
$y' = x$, then $x$ and $x'$ are in the same cycle. If $y = x \not= x'$ or
$y' = x' \not= x$, then one of $x, x'$ is in a finite cycle different from
the other's. Otherwise $y = x_i$ and $y' = x_j$ for $1 \le i, j \le n$.
Then $x \Pi x' \Leftrightarrow x_i = x_j$. This gives a way to decide $\Pi$.
\end{proof}

Kent's \cite[Thm.~1.7]{kent62}, as quoted in the introduction, states that
for the permutations with finitely many infinite cycles, cycle type equality
and effective conjugacy are equivalent. By Theorem~\ref{theorem:permconj2},
in turn, effective conjugacy in $\Perm$ and effective cycle type equality
are equivalent. The preceding Proposition~\ref{prop:fininfdec} yields that
$\G_1$ is contained in $\Perm$ and it follows

\begin{corollary} \label{coro:cycletypeeq} In $\G_1$ cycle type equality and
effective cycle type equality are the same. \qed
\end{corollary}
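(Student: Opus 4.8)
The plan is to chain three results already available: the trivial half of ``cycle type equality versus effective cycle type equality'', Kent's Theorem~\ref{theorem:kent}, and the equivalence of effective conjugacy with effective cycle type equality from Theorem~\ref{theorem:permconj2}, bridged by the inclusion $\G_1 \subseteq \Perm$ of Proposition~\ref{prop:fininfdec}.

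First I would dispose of the direction that holds in full generality, not just in $\G_1$: if $\Part f \cong \Part g$ via a recursive permutation $\theta$, then $\theta$ is in particular a bijection of $\N$ carrying the blocks of $\Part f$ onto the blocks of $\Part g$, hence a cardinality-preserving bijection between the two sets of cycles. Thus $f$ and $g$ have the same number of cycles of every countable cardinality, i.e.~they are cycle type equal. Recursiveness of $\theta$ plays no role in this implication.

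For the converse, assume $f, g \in \G_1$ are cycle type equal. Since the cycle type of a permutation determines its number of infinite cycles, the entire cycle type class of $f$ inside $\G$ is contained in $\G_1$, so by Kent's Theorem~\ref{theorem:kent} this cycle type class is a $\G$-conjugacy class; in particular $f \sim g$. By Proposition~\ref{prop:fininfdec} both $f$ and $g$ lie in $\Perm$, so Theorem~\ref{theorem:permconj2} applies and yields $\Part f \cong \Part g$, which is precisely effective cycle type equality. Combining the two directions proves the corollary.

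I do not expect a genuine obstacle here; the statement is a formal consequence of Theorem~\ref{theorem:permconj2}, Proposition~\ref{prop:fininfdec}, and Kent's theorem. The only point requiring a line of care — and it is minor — is extracting from Kent's theorem the exact assertion that two members of $\G_1$ with equal cycle type are $\G$-conjugate, which is the reformulation already recorded in the introduction, justified by the observation that the cycle type class of a $\G_1$-permutation stays inside $\G_1$.
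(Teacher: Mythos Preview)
Your proposal is correct and follows essentially the same route as the paper: chain Kent's theorem, Proposition~\ref{prop:fininfdec}, and Theorem~\ref{theorem:permconj2}. The only cosmetic difference is that you isolate the direction ``effective cycle type equality $\Rightarrow$ cycle type equality'' as trivial in general, whereas the paper simply passes through effective conjugacy in both directions; this is not a genuinely different approach.
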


For characterisations of cycle decidability, in contrast to
Lemma~\ref{lemma:oneinfdec} and Proposition~\ref{prop:fininfdec}, much more
detailed information about the cycles is required.

\begin{theorem} \label{theorem:cycledecchar} Let $f$ be a recursive
permutation. The following conditions are equivalent:
\begin{compactenum}[\hskip .99em(a)]
\parbox{0.9\textwidth}%
{
  \item $\exists \pi: \pi(x,x') = [x \equiv_f x']$ \hfill(cycle decidability)
  \item $\exists \mu: \mu(x) = \mu x'[x \equiv_f x']$ \hfill(smallest cycle
    representative)
  \item $\exists \vartheta: (x \equiv_f x' \Rightarrow \vartheta(x) =
    \vartheta(x')) \wedge \vartheta(x) \equiv_f x$ \hfill(unique cycle
    representative)
  \item $\exists \chi: \chi(x) = \chi(x') \Leftrightarrow x \equiv_f x'$
    \hfill(characteristic value of a cycle)
  \item $\exists \rho: (\rho(e) \equiv_f \rho(e') \Rightarrow \rho(e) =
    \rho(e')) \wedge \forall x \exists e: x \equiv_f \rho(e)$
    \hfill(transversal)
}
\end{compactenum}
where all functions are taken to be total recursive. These functions are
pairwise computationally equivalent.
\end{theorem}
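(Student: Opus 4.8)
The plan is to prove the five conditions equivalent by establishing $(a)\Leftrightarrow(b)$, $(a)\Leftrightarrow(c)$, $(a)\Leftrightarrow(d)$ and $(a)\Leftrightarrow(e)$, all by explicit constructions, and then obtain pairwise computational equivalence of the witnessing functions by composing the conversions. Every direction except $(e)\Rightarrow(a)$ is immediate, so the bulk of the work is bookkeeping.

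First I would record the easy directions, all anchored at $(a)$. Given a decider $\pi$ as in $(a)$, the function $\mu(x):=\mu x'[\pi(x,x')=\fr t]$ is total recursive (the search halts at $x'=x$ since $x\equiv_f x$) and already serves as the witness for $(b)$; the very same $\mu$ witnesses $(c)$, since it is constant on cycles and $\mu(x)\equiv_f x$; it witnesses $(d)$, since $\mu(x)=\mu(x')\Leftrightarrow x\equiv_f x'$ (if $x\equiv_f x'$ the cycles coincide and hence their least elements agree; conversely $\mu(x)=\mu(x')$ together with $\mu(x)\equiv_f x$ and $\mu(x')\equiv_f x'$ forces $x\equiv_f x'$ by transitivity); and with $\rho:=\mu$ it witnesses $(e)$, because distinct values of $\mu$ lie in distinct cycles and $x\equiv_f\rho(x)$ for every $x$. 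For the reverse directions, from a witness $\mu$, $\vartheta$ or $\chi$ one simply sets $\pi(x,x'):=[\mu(x)=\mu(x')]$, $[\vartheta(x)=\vartheta(x')]$ or $[\chi(x)=\chi(x')]$ respectively; correctness is the computation just made in the $\mu$ case, the defining property of $\vartheta$ plus transitivity in the $(c)$ case, and the defining property of $(d)$ in the $\chi$ case. All of these conversions are uniform, hence effective by the Church--Turing Thesis.

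The only step with content is $(e)\Rightarrow(a)$, and I expect it to be the ``main obstacle'' even though it is mild. Suppose $\rho$ is a transversal function as in $(e)$. The key fact is that reachability in a recursive permutation is recursively enumerable: from $x$ one can search over $k\in\Z$ for $f^k(x)=y$. Since $\rho$ meets every cycle, given $x$ I would dovetail the search over both $\rho$-indices $e$ and integer exponents $k$ — concretely, $\langle e,j\rangle:=\mu\langle e,j\rangle[\,f^{\delta^{-1}(j)}(x)=\rho(e)\,]$ using the pairing function and the $\delta$-encoding of $\Z$ — to produce an index $e_x$ with $x\equiv_f\rho(e_x)$, and analogously $e_{x'}$ from $x'$. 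Then set $\pi(x,x'):=[\rho(e_x)=\rho(e_{x'})]$. This is correct because $x\equiv_f x'$ holds iff the cycles of $\rho(e_x)$ and $\rho(e_{x'})$ coincide, i.e.\ iff $\rho(e_x)\equiv_f\rho(e_{x'})$, which by the defining implication in $(e)$ is equivalent to $\rho(e_x)=\rho(e_{x'})$; and $\pi$ is total recursive since the search terminates on every input. The only thing to be careful about here is to phrase the double search as a single $\mu$-search, so that $\pi$ is manifestly partial recursive and, by termination, total.

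Stringing these together gives $(a)\Leftrightarrow(b)\Leftrightarrow(c)\Leftrightarrow(d)\Leftrightarrow(e)$ with every arrow effective, so a witness for any one condition can be converted into a witness for any other by composing the above algorithms; this yields the asserted pairwise computational equivalence.
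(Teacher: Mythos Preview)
Your proof is correct and essentially the same as the paper's: the paper organises the implications as two cycles $(a)\Rightarrow(b)\Rightarrow(e)\Rightarrow(a)$ and $(b)\Rightarrow(c)\Rightarrow(d)\Rightarrow(b)$ rather than your hub-and-spoke through $(a)$, but every individual construction coincides with yours (set $\mu$, $\vartheta$, $\chi$, $\rho$ all equal to the least-representative function, and dovetail over $e$ and powers of $f$ for $(e)\Rightarrow(a)$). Your $(e)\Rightarrow(a)$ is even slightly more careful, since you compare $\rho(e_x)=\rho(e_{x'})$ rather than $e_x=e_{x'}$, which is the right test when $\rho$ is not assumed injective.
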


The condition (e) describes a recursively enumerable system of
representatives for all cycles. Such a system is called a \emph{choice set}
by \cite{kent62} and \cite{myhill59}, and a \emph{transversal} by
\cite{higman90}. We adopt the term ``transversal''. Higman furthermore calls
the transversal consisting of the smallest elements of each cycle
\emph{principal}, which corresponds to condition (b) above. As the theorem
shows, any recursively enumerable transversal of a permutation is
computationally equivalent to the principal transversal.

\begin{proof} We will show (a) $\Rightarrow$ (b) $\Rightarrow$ (e)
$\Rightarrow$ (a) and (b) $\Rightarrow$ (c) $\Rightarrow$ (d) $\Rightarrow$
(b).

``(a) $\Rightarrow$ (b)'': $\mu(x) := \mu x'[\pi(x, x')]$ is computable.

``(b) $\Rightarrow$ (e)'': We need to enumerate representatives of all the
cycles without representing a cycle twice with different values (it is
allowed to repeat values because $f$ may have only finitely many cycles).
This can be done by $\rho = \mu$.

``(e) $\Rightarrow$ (a)'': Given $x, x'$, start dove-tailing computations
of $f^k\rho(e)$ where $e$ varies in $\N$ and $k$ varies in $\Z$. Since
$\rho$ enumerates a complete system of representatives we will eventually
find $e, e'$ and $k, k'$ such that $f^k\rho(e) = x$ and $f^{k'}\rho(e') =
x'$. Because cycles are uniquely represented in $\rho$, it holds $x \equiv_f
x' \Leftrightarrow e = e'$.

``(b) $\Rightarrow$ (c)'': $\vartheta = \mu$ is possible.

``(c) $\Rightarrow$ (d)'': $\chi = \vartheta$ is possible because $x
\equiv_f x' \Rightarrow \vartheta(x) = \vartheta(x')$, and since $x \equiv_f
\vartheta(x)$, we have $x \not\equiv_f x' \Rightarrow \vartheta(x) \not=
\vartheta(x')$.

``(d) $\Rightarrow$ (b)'': Take $\mu(x) := \mu x'[\chi(x) = \chi(x')]$.
\end{proof}

\cite[Thm.~1.4]{myhill59} shows in the more general context of digraphs of
recursive functions (not necessarily permutations), but with the same
technique as above, that a recursively enumerable transversal for all weakly
connected components, which are the corresponding generalisation of cycles,
implies decidability of every component.

Aside from the transversal criterion, all characterisations of cycle
decidability listed in Theorem~\ref{theorem:cycledecchar} come in the form
of an external function which answers questions about the cycles of the
permutation. The next subsection follows a more profound approach to cycle
decidability. We define a normal form for permutations, an intrinsic
property of the permutation, and prove that conjugacy to a normal form
permutation is equivalent to cycle decidability.

\paragraphtitle{Normal forms} \parlabel{par:normal} Let $f$ be a recursive
permutation. Then for every block $P \in \Part f$ and arbitrary $x_P \in P$,
the function $\lambda j[f^{\delta^{-1}(j)}(x_P)]$ enumerates $P$. Thus
$\Part f$ is a partition whose blocks are recursively enumerable by the
powers of a single fixed permutation at appropriately chosen starting points.
Proposition~\ref{prop:pchar} shows that if $\Part f$ is decidable, every
block of it must be decidable. It is well-known that a set is recursive iff
it is recursively enumerable in non-decreasing order. We use this idea to
define a normal form for permutations with decidable cycles and derive a
characterisation for permutable decidable equivalences. The following lemma
provides an important technique for constructing a cycle out of an infinite
recursive set. Some variants of the idea will also be used later.

\begin{lemma} \label{lemma:buildcycle} Let $p$ be the characteristic
function of an infinite recursive set $A$. Then we can find a cyclic
permutation whose support is $A$.
\end{lemma}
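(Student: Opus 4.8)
The plan is to realise $A$ as a single bi-infinite orbit
$\dots \mapsto a_{-2} \mapsto a_{-1} \mapsto a_0 \mapsto a_1 \mapsto a_2 \mapsto \dots$
by laying the elements of $A$ out along $\Z$ and shifting by one. The bijection $\delta$ from Definition~\ref{def:delta} is exactly the tool that converts between an $\N$-enumeration of $A$, which is readily computed from $p$ because $A$ is recursive and infinite, and the $\Z$-enumeration that a single cycle requires.

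First I would build the increasing enumeration $a \colon \N \to A$ by $a(0) := \mu x[x \in A]$ and $a(n+1) := \mu x[x > a(n) \wedge x \in A]$; this is total recursive \emph{precisely because} $A$ is infinite (decidability of membership in $A$ via $p$ makes the $\mu$ searches effective). Alongside it define the rank function $n \colon A \to \N$, where $n(x)$ is the number of elements of $A$ strictly below $x$, computable by a bounded search with $p$; then $a(n(x)) = x$ for every $x \in A$. Next set
\[
  g(x) := \begin{cases} a\bigl(\delta(\delta^{-1}(n(x)) + 1)\bigr), & x \in A, \\ x, & x \notin A, \end{cases}
\]
which by the Church--Turing Thesis is recursive, with a program obtainable uniformly from $p$.

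The remaining work is verification. I would exhibit the inverse explicitly, namely $g^{-1}(x) = a\bigl(\delta(\delta^{-1}(n(x)) - 1)\bigr)$ on $A$ and the identity off $A$, so that $g \in \G$. For the support, injectivity of $\delta^{-1}$ gives $\delta^{-1}(n(x)) + 1 \neq \delta^{-1}(n(x))$, hence $g(x) \neq a(n(x)) = x$ for all $x \in A$, while every point outside $A$ is fixed; so the support is exactly $A$. For the cycle structure I would prove by induction on $k \in \Z$ that $g^k(x) = a\bigl(\delta(\delta^{-1}(n(x)) + k)\bigr)$ for $x \in A$; since $\delta$ is a bijection $\Z \to \N$, as $k$ runs through $\Z$ the argument $\delta(\delta^{-1}(n(x)) + k)$ runs through all of $\N$, so the $g$-orbit of $x$ is all of $A$. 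Thus $A$ is a single orbit and $g$ is a cyclic permutation with support $A$.

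There is no deep obstacle here; the only point needing care is the bookkeeping with $\delta$ and $\delta^{-1}$, in particular the induction identity $g^k(x) = a\bigl(\delta(\delta^{-1}(n(x)) + k)\bigr)$, where the $k>0$ and $k<0$ directions must be matched against the definitions of $g$ and $g^{-1}$ respectively. The sole hypothesis actually used is the infinitude of $A$, which is what makes the increasing enumeration $a$ total; for finite $A$ one would instead fold the enumeration into a finite cyclic permutation, but that case lies outside the scope of this lemma.
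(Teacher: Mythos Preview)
Your proposal is correct and essentially identical to the paper's proof: you build the same increasing enumeration $a$ of $A$ and define the permutation by the same formula $a\delta\succf\delta^{-1}a^{-1}$ on $A$ (your rank function $n$ is the paper's $a^{-1}$), with the identity off $A$. Your verification is in fact a bit more explicit than the paper's, which argues injectivity and surjectivity directly from the composition and leaves the single-orbit claim implicit (it is spelled out only in the subsequent Corollary~\ref{coro:buildcyclenormal} via the identity $f|_A^{\delta^{-1}(j)}a(0) = a(j)$, which is the $x = a(0)$ case of your induction formula).
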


\begin{proof} Let $x$ be given. $[x \in A]$ is computable by $p$. If
$x \not\in A$, then return $f(x) := x$. Else list the members of $A$ in
increasing order:
\begin{align*}
  a(0) &:= \mu x[x \in A], \\
  a(n+1) &:= \mu x[x > a(n) \wedge x \in A]
\end{align*}
which is uniform in $p$ and total recursive because $A$ is infinite.
Then define
\[
  f(x) := a\delta(\delta^{-1}a^{-1}(x)+1), \, x \in A.
\]
The $\delta$ function is indeed used here according to
Definition~\ref{def:delta}: Denote by $\succf$ the successor function
$\succf(x) = x+1$ which is a recursive permutation on $\Z$. Then
$f = a\delta\succf\delta^{-1}a^{-1}$ where $\delta\succf\delta^{-1}$
is a recursive permutation $\N \to \N$. This definition of $f$ takes members
of $A$ into $A$ injectively, as it is a composition of injective functions.
For surjectivity it suffices to observe that $a$ is surjective on $A$, and
that $\delta\succf\delta^{-1}a^{-1}$ maps $A$ onto $\N$. It is clear that
$f$ has no fixed points in $A$.
\end{proof}

In the normal form for cyclic permutations the structure of the cycle
(repeated application of the permutation or its inverse) shall exhibit
the increasingly ordered list of all members of the cycle, in an
algorithmically recognisable way. A first attempt would be to take all
members of the cycle in increasing order, $a_0 < a_1 < a_2 < \dots$ and
define the cycle to be $\cycle{a_0, a_1, a_2, \dots}$. This layout is easy
to work with but yields a permutation only if the number of $a_i$'s is
finite, for if not $a_0$ will not get an inverse image. For infinite cycles,
the idea of using $\delta$ from the proof of Lemma~\ref{lemma:buildcycle}
comes into play:

\begin{definition} A (finite or infinite) cycle $f = \cycle{\dots, a_{-1},
a_0, a_1, \dots}$ of length $n \in \N^+ \cup \{\infty\}$ whose least element
is $a_0$ is in \emph{normal form} if the function
$\lambda j[f^{\delta^{-1}(j)}(a_0)]$ is increasing for $0 \le j < n$. A
recursive permutation is in \emph{normal form} if every cycle in its
disjoint cycle decomposition is in normal form. Such permutations are
more briefly called \emph{normal}.

A permutation where every infinite cycle is normal and every finite cycle
$\cycle{a_0, \dots, a_{n-1}}$ with $a_0$ as smallest element, has
$\lambda j[f^j(a_0)]$ increasing for $0 \le j < n$, is called
\emph{semi-normal}.

Because there is only one way to order the entirety of a cycle into an
increasing sequence, there is for any permutation $f$ precisely one
(possibly non-recursive) normal permutation $f'$ with $\Part f = \Part f'$.
This $f'$ is called the \emph{normal form} of $f$. The \emph{semi-normal
form} of a permutation is defined analogously and also unique.
\end{definition}

The less straightforward definition of normality serves the purpose to unify
the look of normal cycles regardless of whether they are finite or infinite,
which cannot be decided, as is shown in \parref{par:cyclefinite}.
Indeed the idea from Lemma~\ref{lemma:buildcycle} can also be made to work
with finite cycles if their length is known, as Figure~\ref{fig:normaldiag}
indicates. An archetypal construction of a normal cycle already appeared in
Lemma~\ref{lemma:buildcycle}:

\begin{corollary} \label{coro:buildcyclenormal} The cycle constructed in the
proof of Lemma~\ref{lemma:buildcycle} is normal.
\end{corollary}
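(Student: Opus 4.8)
The plan is to verify that the cyclic permutation $f$ produced in the proof of Lemma~\ref{lemma:buildcycle} meets the definition of a normal cycle, i.e.\ that its least element $a_0$ satisfies the requirement that $\lambda j[f^{\delta^{-1}(j)}(a_0)]$ is increasing on $\{0 \le j < n\}$, where $n = |A|$. Since $A$ is infinite we have $n = \infty$, and the enumeration $a$ defined in that proof is exactly the increasing enumeration of $A$, so $a_0 = a(0) = \mu x[x \in A]$ is the least element of the support, as the definition of normal form demands.

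First I would recall from the proof that $f = a\delta\succf\delta^{-1}a^{-1}$ on $A$, where $\succf$ is the successor on $\Z$. The key computation is to evaluate $f^{\delta^{-1}(j)}(a_0)$. Since $a(0) = a_0$ we have $a^{-1}(a_0) = 0$, hence $\delta^{-1}a^{-1}(a_0) = \delta^{-1}(0) = 0$. Iterating, $f^m(a_0) = a\delta\succf^m\delta^{-1}a^{-1}(a_0) = a\delta\succf^m(0) = a\delta(m)$ for every $m \in \Z$ (using that conjugation by $a\delta$ turns powers of $f$ into powers of $\succf$, and $\delta^{-1}a^{-1}(a_0)=0$). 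Therefore $f^{\delta^{-1}(j)}(a_0) = a\delta\delta^{-1}(j) = a(j)$ for all $j \in \N$. So the function $\lambda j[f^{\delta^{-1}(j)}(a_0)]$ is literally the enumeration $a$, which was constructed to be strictly increasing. That establishes normality of the cycle.

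The only mild subtlety — and the closest thing to an obstacle — is bookkeeping with $\delta$ and the negative powers: one must confirm that $f^m(a_0) = a\delta(m)$ holds for $m < 0$ as well, not just $m \ge 0$, so that the single orbit is correctly described and $a_0$ really is attained at $j = \delta(0) = 0$ in the parametrization. This follows because $\succf$ is a genuine permutation of $\Z$ (so $\succf^{-1}$ is predecessor) and $\delta : \Z \to \N$ is a bijection, making $\delta\succf\delta^{-1}$ a permutation of $\N$ whose powers are $\delta\succf^m\delta^{-1}$ for all $m \in \Z$; conjugating further by $a$ (a bijection $\N \to A$) gives $f^m = a\delta\succf^m\delta^{-1}a^{-1}$ on $A$ for every $m \in \Z$. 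Everything else is immediate from the definitions, so the proof is short.

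\begin{proof} Keep the notation of the proof of Lemma~\ref{lemma:buildcycle}. The support of $f$ is the infinite set $A$, which forms a single cycle, and its least element is $a_0 := a(0)$. We have $f = a\delta\succf\delta^{-1}a^{-1}$ on $A$, where $\succf$ is the successor permutation of $\Z$. Since $a$ is a bijection $\N \to A$ and $\delta\succf\delta^{-1}$ is a permutation of $\N$, we get $f^m = a\,(\delta\succf^m\delta^{-1})\,a^{-1}$ on $A$ for every $m \in \Z$. Now $a^{-1}(a_0) = 0$ and $\delta^{-1}(0) = 0$, so $f^m(a_0) = a\delta\succf^m(0) = a\delta(m)$ for all $m \in \Z$. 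Hence for $0 \le j < \infty = |A|$,
\[
  f^{\delta^{-1}(j)}(a_0) = a\delta\delta^{-1}(j) = a(j),
\]
which is strictly increasing in $j$ by construction of $a$. Thus the cycle is in normal form.
\end{proof}
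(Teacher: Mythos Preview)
Your proof is correct and follows essentially the same calculation as the paper: both show $f^{\delta^{-1}(j)}(a_0) = a(j)$ by exploiting the conjugation form $f|_A = a\delta\succf\delta^{-1}a^{-1}$, and then invoke that $a$ is increasing. The only addition in the paper's version is a one-line remark that the fixed points of $f$ outside $A$ are trivially normal cycles, so that the whole cyclic permutation (not just its support orbit) is in normal form.
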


\begin{proof} In the nomenclature of the lemma, $f|_A :=
a\delta\succf\delta^{-1}a^{-1}$. The assertion follows by mere
calculation:
\begin{align*}
    f|_A^{\delta^{-1}(j)}a(0)
 &= (a\delta\succf\delta^{-1}a^{-1})^{\delta^{-1}(j)}a(0) \\
 &= a\delta\succf^{\delta^{-1}(j)}\delta^{-1}a^{-1}a(0) \\
 &= a\delta\delta^{-1}(j)
  = a(j),
\end{align*}
which is increasing. Outside of $A$ are only fixed points of $f$ which are
normal cycles.
\end{proof}

\begin{figure}
\begin{tikzpicture}[->,>=stealth',auto,node distance=2.8em,
  main node/.style={font=\sffamily}]

  \node[main node] (lparen)                   {$\strut ($};
  \node[main node] (anm1)   [right of=lparen, xshift=-1.0em] {$\strut a_{\rho(n-1)}$};
  \node[main node] (dots1)  [right of=anm1]   {$\strut\dots$};
  \node[main node] (a3)     [right of=dots1]  {$\strut a_{\rho(3)}$};
  \node[main node] (a1)     [right of=a3]     {$\strut a_{\rho(1)}$};
  \node[main node] (a0)     [right of=a1]     {$\strut a_{\rho(0)}$};
  \node[main node] (a2)     [right of=a0]     {$\strut a_{\rho(2)}$};
  \node[main node] (dots2)  [right of=a2]     {$\strut\dots$};
  \node[main node] (anm2)   [right of=dots2]  {$\strut a_{\rho(n-2)}$};
  \node[main node] (an)     [right of=anm2]   {$\strut a_{\rho(n)}$};
  \node[main node] (rparen) [right of=an, xshift=-1.6em] {$\strut )$};

  \path[every node/.style={font=\sffamily\small}]
    (a0.north)    edge [bend right] node [left]  {} (a1.north)
    (a1.south)    edge [bend right] node [right] {} (a2.south)
    (a2.north)    edge [bend right] node [left]  {} (a3.north)
    (a3.south)    edge [bend right] node [right] {} (dots2.south)
    (dots2.north) edge [bend right] node [left]  {} (dots1.north)
    (dots1.south) edge [bend right] node [right] {} (anm2.south)
    (anm2.north)  edge [bend right] node [left]  {} (anm1.north)
    (anm1.south)  edge [bend right] node [right] {} (an.south);
\end{tikzpicture}
\caption{Given a cycle $\cycle{a_0, a_1, \dots, a_n}$ with the rearrangement
$a_{\rho(0)} < a_{\rho(1)} < \dots < a_{\rho(n)}$ of the $a_i$'s into an
increasing sequence, use the $\delta$ technique from
Lemma~\ref{lemma:buildcycle} to obtain the displayed normal cycle. The
curved arrows indicate the increasing order of the elements, the order from
left to right is the cyclic order. It is assumed that $n$ is even for the
sake of the example.}
\label{fig:normaldiag}
\end{figure}
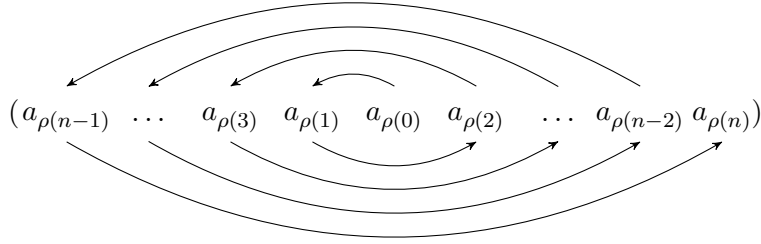

\begin{lemma} \label{lemma:finitesetcycle} There is an algorithm which is
uniform in $f \in \G$ and decides for every finite set $A$, given as a list
of its elements, if it is the union of cycles of $f$, and, if it is not,
computes a member of $[A]_f \setminus A$, where $[A]_f :=
\bigcup_{x \in A}  [x]_f$.
\end{lemma}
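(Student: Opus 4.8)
The plan is to reduce the question ``is $A$ a union of cycles of $f$?'' to the finitary question ``is $f(A) = A$?''. First I would record the elementary observation that, for a \emph{finite} set $A$, being a union of cycles of $f$ is equivalent to $f(A) = A$. The implication ``$\Rightarrow$'' is immediate, since each cycle of $f$ is $f$-invariant. For ``$\Leftarrow$'', suppose $f(A) = A$ and let $x \in A$; then all forward iterates $f^k(x)$ lie in $A$, and since $x \in A = f(A)$ we have $x = f(y)$ for some $y \in A$, so $f^{-1}(x) \in A$, and inductively every backward iterate $f^{-k}(x)$ lies in $A$ as well. Hence $[x]_f \subseteq A$ for every $x \in A$, so $[A]_f = A$, a union of cycles.

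The algorithm is then straightforward: given a G\"odel number for $f \in \G$ and the list $a_1, \dots, a_m$ of the elements of $A$, compute the values $f(a_1), \dots, f(a_m)$ --- this terminates because $f$ is total --- and test whether $\{f(a_1), \dots, f(a_m)\} = A$ as a set. This comparison is decidable and manifestly uniform in $f$ and in $A$. If the test succeeds, report that $A$ is a union of cycles of $f$.

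If the test fails, I would extract the witness directly from the values already computed. Since $f$ is injective and $A$ is finite, $|f(A)| = |A|$, so $f(A) \subseteq A$ would force $f(A) = A$; as this is excluded, there is an index $i$ with $f(a_i) \notin A$. The algorithm returns the least such $f(a_i)$. This element lies in $[a_i]_f \subseteq [A]_f$ but outside $A$, so it belongs to $[A]_f \setminus A$, as required.

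There is no genuine obstacle in this lemma; the only point worth emphasising is the interplay of injectivity of $f$ with finiteness of $A$, which is what makes verifying forward invariance $f(A) \subseteq A$ (equivalently $f(A) = A$) sufficient. In particular one need not separately check closure under $f^{-1}$, and hence does not even require a G\"odel number for $f^{-1}$.
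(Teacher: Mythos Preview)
Your proof is correct and follows essentially the same approach as the paper: both reduce the question to checking whether $f(A)\subseteq A$ (the paper phrases this as $|A\cup f(A)|=|A|$, you as $f(A)=A$), and both extract the witness from $f(A)\setminus A$ when the test fails. Your explicit remark that injectivity plus finiteness makes forward invariance sufficient, so that a G\"odel number for $f^{-1}$ is not needed, is a nice point the paper leaves implicit.
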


\begin{proof} We use the fact that a finite set is a union of cycles of $f$
iff it is closed under application of $f$. Obtain a list of the members of
$X = A \cup f(A)$, which is possible because $A$ was given as such a list.
Without loss of generality, these finite lists are without repetition. Then
we can decide if $|X| = |A|$. If this is the case, then $A$ is closed under
application of $f$ and thus a union of cycles. Otherwise $|X| > |A|$ and
$A$ is not a union of cycles. Since evidently $X \subseteq [A]_f$, we can
find an element in $[A]_f \setminus A$ by inspecting the list difference
$X \setminus A$.
\end{proof}

The encoding of a finite set as a finite bitstring where the $x$-th bit is
set iff $x$ is a member of the set is called \emph{canonical}
\cite[\textsection~5.6]{rogers87}. It is easily seen that this canonical
encoding is computationally equivalent to the encoding as a finite list.
This encoding was essential in the proof to obtain the cardinality of $A$.
It is shown in \cite[\textsection~5.6, Theorem~XV(b)]{rogers87} that the
cardinality of a finite set cannot be computed from a decider for that set,
which would have been another candidate for an encoding of finite sets.

\begin{proposition} \label{prop:existnormal} Let $f$ be a recursive
permutation and let $\pi$ decide $\Pi = \Part f$. Then there is a recursive
$f'$ in normal form with $\Part f' = \Part f$. A program for $f'$ can be
computed uniformly effectively from $f$ and $\pi$.
\end{proposition}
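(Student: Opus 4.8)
The plan is to compute, for every $x$, the value $f'(x)$ of the (necessarily unique) normal form of $f$ directly, by dovetailing two searches which between them succeed for every $x$ regardless of whether the cycle $[x]_f$ is finite or infinite. One cannot decide which case obtains --- not even given $f$ and $\pi$, since this is the cycle finiteness problem, shown to be undecidable on $\Perm$ in \parref{par:cyclefinite} --- so the construction must handle both possibilities at once.

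\emph{The target.} Note $P(x,\Pi) = [x]_f$, as $\Pi = \Part f$. Fix a block $P$ and let $b\colon \ell \mapsto b_\ell$ enumerate $P$ in increasing order $b_0 < b_1 < \dots$; this is partial recursive uniformly in $\pi$ (test $0,1,2,\dots$ for membership in $P$). If $P$ is infinite, the normal cyclic permutation with support $P$ is $b\,\delta\,\succf\,\delta^{-1}b^{-1}$ --- this is exactly the cycle produced by Lemma~\ref{lemma:buildcycle} from the characteristic function of $P$, hence normal by Corollary~\ref{coro:buildcyclenormal} --- and it sends $b_j$ to $b_{j^+}$, where $j^+ := \delta(\delta^{-1}(j)+1)$. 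If $P$ is finite with $|P| = n$, its normal cyclic permutation places $b_\ell$ at cyclic position $\delta^{-1}(\ell) \bmod n$ and advances the position by $1$; equivalently, read in cyclic order it lists the even-index members $b_0, b_2, b_4, \dots$ ascending and then the odd-index members $\dots, b_5, b_3, b_1$ descending (the layout of Figure~\ref{fig:normaldiag}). One checks that $\ell \mapsto \delta^{-1}(\ell)\bmod n$ is a bijection $\{0,\dots,n-1\}\to\Z/n\Z$, so this is well defined, and that it meets the normal-form condition. In either case this permutation of $P$ is computable once $P$ is known (in the finite case, from the list of members of $P$ together with $n$).

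\emph{The algorithm for $f'(x)$.} Compute the rank $j := |\{y : y < x \wedge y\,\Pi\,x\}|$ of $x$ in its block via $\pi$, and set $j^+ := \delta(\delta^{-1}(j)+1)$. Run in parallel:
\begin{compactenum}[\hskip 2em(A)]
\item compute $f(x), f^2(x), f^3(x), \dots$; as soon as $f^m(x) = x$ for some $m \ge 1$, the set $[x]_f = \{x, f(x), \dots, f^{m-1}(x)\}$ is a finite cycle of length $m$: sort it and output the value at $x$ of the finite normal cyclic permutation above;
\item enumerate $P(x)$ in increasing order $b_0 < b_1 < \dots$ via $\pi$ and output $b_{j^+}$ as soon as it is reached.
\end{compactenum}
Take the output of whichever branch halts first. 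Branch (A) halts exactly when $[x]_f$ is finite, and branch (B) halts exactly when $b_{j^+}$ exists, i.e.\ when $|P(x)| > j^+$; since the latter fails only for finite blocks, at least one branch halts, so $f'$ is total. When both halt --- precisely when $P(x)$ is finite with $n := |P(x)| > j^+$ --- they return the same value: (B) returns $b_{j^+}$, and since $\delta^{-1}(j^+) = \delta^{-1}(j)+1$ and $0 \le j^+ < n$, the finite normal cyclic permutation also sends $b_j$ to $b_{j^+}$. Hence $f'$ is well defined. When only (B) halts, $P(x)$ is infinite and $f'(x) = b_{j^+} = (b\,\delta\,\succf\,\delta^{-1}b^{-1})(x)$; when only (A) halts, $P(x)$ is finite and $f'(x)$ is the value prescribed by its finite normal cyclic permutation. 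Thus $f'$ restricts on every block $P \in \Pi$ to the normal cyclic permutation of $P$, so $f'$ is a recursive permutation with $\Part f' = \Part f$ that is in normal form. The procedure invokes $f$ and $\pi$ only as subroutines in a fixed algorithm, so a program for $f'$ is obtained from them uniformly effectively.

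\emph{The main obstacle.} It is exactly what the dovetailing circumvents: $f'(x)$ must be fixed without deciding the finiteness of $[x]_f$. The point is that the ``infinite'' rule $x \mapsto b_{j^+}$ is correct for \emph{all} blocks \emph{except} the largest few elements of a finite cycle (those $b_j$ with $j^+ \ge |P|$, which forces $j \ge |P|-3$), and precisely on those inputs branch (B) fails to halt while branch (A) succeeds with the correct value; on the overlap the two branches agree, which is what makes $f'$ well defined.
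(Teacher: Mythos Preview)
Your proof is correct, but it takes a genuinely different route from the paper's. The paper does not dovetail; instead, for input $x$ with rank $k$ in its block, it uses Lemma~\ref{lemma:finitesetcycle} to test whether the finite set $A = \{x_0 < \dots < x_k = x\}$ of block elements $\le x$ is closed under $f$. If it is, $A$ is the whole (finite) cycle and the image is read off; if not, the lemma produces a witness $> x$, which guarantees that the search for the next element $x_{k+1}$ terminates. This yields, for each $x$, a decision of the predicate ``there exists $x' > x$ with $x' \Pi x$'' --- exactly the $\rho$ of Theorem~\ref{theorem:rhochar} --- after which an explicit five-case formula in $k$ and the existence of $x_{k+1}, x_{k+2}$ gives $f'(x)$. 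Your approach replaces this deterministic test by a race between cycle closure under $f$-iteration and the increasing enumeration reaching $b_{j^+}$, relying on the nice observation that the infinite-cycle rule $b_j \mapsto b_{j^+}$ already agrees with the finite-cycle rule whenever $j^+ < |P|$. What the paper's method buys is the $\rho$ predicate as a by-product, which it explicitly reuses in the proof of Theorem~\ref{theorem:rhochar}; what your method buys is that it avoids Lemma~\ref{lemma:finitesetcycle} entirely and makes the single obstruction (the one even-indexed element near the top of a finite cycle where $j^+ \ge |P|$) very transparent.
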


\begin{proof} To given $x$ find \emph{all} $x_0 < x_1 < \dots < x_k = x$
with $x_i \Pi x$. Call the list of these elements $A$. Using
Lemma~\ref{lemma:finitesetcycle}, we can determine if $A$ is a union of
cycles. If it is, it must be a single cycle because all members of $A$ are
in the same cycle. Then the appropriate image of $x = x_k$ can easily be
determined, according to Figure~\ref{fig:normaldiag}.

Otherwise Lemma~\ref{lemma:finitesetcycle} gives an element $x^* \in [A]_f
\setminus A = [x]_f \setminus A$. Since the $x_0, \dots, x_k$ are all
members of the cycle satisfying $x_i \le x$, it must be $x^* > x$. Search
$\hat x = \mu \hat x[\hat x > x \wedge \hat x \Pi x]$; this search will
terminate as $x^* \Pi x$ and thus $\hat x \le x^*$.

This algorithm provides a way to decide if there are still greater elements
than $x$ in its cycle and, in the affirmative case, to find the smallest
such element. Figure~\ref{fig:normaldiag} suggests that this is enough
information to carry the construction of the cycle out to its end, in the
finite as well as in the infinite case:
\[
  f'(x) := \begin{cases}
    x_{k+2}, & \text{$k$ even and $x_{k+2}$ exists}, \\
    x_{k+1}, & \text{$k$ even, $x_{k+1}$ exists and $x_{k+2}$ does not exist}, \\
    x_{k-1}, & \text{$k$ even and $x_{k+1}$ does not exist}, \\
    x_0, & \text{$k = 1$}, \\
    x_{k-2}, & \text{$k$ odd and $k > 1$}. 
   \end{cases}
\]
\par \vspace{-1.55\baselineskip}
\qedhere
\end{proof}

\begin{corollary} \label{coro:hasuniquenormal} If $\Pi$ is decidable and
permutable, then $\Perm \Pi$ contains exactly one permutation in normal
form. \qed
\end{corollary}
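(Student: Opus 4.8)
The plan is to derive both existence and uniqueness from the results already established, with essentially no new work. For existence, I would argue as follows: since $\Pi$ is permutable, pick some $f \in \Perm \Pi$; since $\Pi$ is decidable, fix a decider $\pi$ for it. Proposition~\ref{prop:existnormal} applied to $f$ and $\pi$ then produces a recursive permutation $f'$ in normal form with $\Part f' = \Part f = \Pi$. By definition, $f' \in \Perm \Pi$, so $\Perm \Pi$ contains at least one normal permutation.

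For uniqueness, I would invoke the final paragraph of the definition of normal form: for any permutation $f$ there is exactly one (a priori possibly non-recursive) normal permutation $f'$ with $\Part f' = \Part f$, because there is only one way to arrange the entirety of each cycle into an increasing sequence. Hence if $g_1, g_2 \in \Perm \Pi$ are both in normal form, then $\Part g_1 = \Pi = \Part g_2$, and by the uniqueness of the normal form of (say) $g_1$ we must have $g_1 = g_2$. The recursive normal permutation produced above is therefore the unique element of $\Perm \Pi$ in normal form.

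I do not anticipate a genuine obstacle here; the corollary is a packaging of Proposition~\ref{prop:existnormal} (for existence) together with the already-noted set-theoretic uniqueness of the normal form (for uniqueness). The only point requiring a word of care is that ``$\Perm \Pi$ contains exactly one normal permutation'' is a statement about permutations in $\Perm \Pi$, i.e.\ about recursive permutations associated with $\Pi$; uniqueness among these follows a fortiori from uniqueness among all permutations (recursive or not) with cycle partition $\Pi$, so the decidability hypothesis is only needed to guarantee that the unique normal permutation is in fact recursive, which is exactly what Proposition~\ref{prop:existnormal} delivers.
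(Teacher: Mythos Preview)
Your proposal is correct and matches the paper's own reasoning: the corollary is stated with a bare \qedsymbol{} because existence is immediate from Proposition~\ref{prop:existnormal} and uniqueness from the remark in the definition that a permutation has exactly one normal form. There is nothing to add.
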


Let $f$ be a recursive permutation with decidable cycles. Then $\Part f$
contains, by Corollary~\ref{coro:hasuniquenormal}, a recursive permutation
$f'$ in normal form, which is the normal form of $f$.
Proposition~\ref{prop:permconj1} shows that $f$ is effectively conjugate
to $f'$. The converse is also true. Suppose that $f$ is effectively
conjugate to its normal form $f'$. It is to be shown that $\Part f'$ is
decidable. The proof exploits the resemblance between a normal cycle and a
strictly convex function: the smallest element $x_0$ of a cycle of $f'$ is
characterised by the condition $x_0 \le \min \{x_0^{\pm}\}$, where
$x^{\pm} := f^{\pm 1}(x)$. Thus, given any $x$, an alternating search
through the cycle, beginning at $x$, can be performed to find the smallest
element of $x$'s cycle:
\[
  \mu x_0[x_0 \equiv_{f'} x] = \xi f'^k(x)[f'^k(x) \le
    \min \{f'^{k+1}(x), f'^{k-1}(x)\}],
\]
where $\xi f'^k(x)[p(k)]$ is a more suggestive notation for
$f'^{\xi k[p(k)]}(x)$.

The alternating search can be replaced by a more intelligent algorithm which
further uses the resemblance of normal cycles and strictly convex functions,
namely that the direction from any point towards the minimum can be
determined by inspecting a neighbourhood of the given point. For any $x$,
compute $x^+$ and $x^-$. If, by the test above, $x$ is not the minimum of
the cycle, at least one of $x^+$ and $x^-$ must be smaller than $x$. Take
the smallest of both. If it is $x^+$, the minimum can be found in
$f$-positive direction, i.e.~the smallest element is of the form $f^k(x)$
with $k > 0$, and if the smaller element is $x^-$, the minimum is in
$f$-negative direction. By a variant of binary search, the minimum can
be found with a number of steps logarithmic in the distance from the
starting point to the minimum in the cycle.

\begin{lemma} \label{lemma:normalmin} The function
$\lambda x[\mu x'[x' \equiv_{f'} x]]$ can be computed uniformly effectively
in recursive normal permutations $f'$. \qed
\end{lemma}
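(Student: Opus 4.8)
The plan is to formalise the discussion immediately preceding the statement: a cycle in normal form behaves like a strictly convex function, so the minimum of each cycle can be located by an alternating search from an arbitrary starting point. Concretely, for a recursive normal permutation $f'$ I would set
$m(x) := \xi f'^k(x)[\,f'^k(x) \le \min\{f'^{k+1}(x), f'^{k-1}(x)\}\,]$,
that is, $m(x) = f'^{\,\xi k[p_x(k)]}(x)$ where $p_x(k)$ is the displayed local-minimum predicate. This is partial recursive uniformly in any Gödel number of $f'$ by the Church-Turing Thesis: since $f'$ is total, one just evaluates $p_x$ along the alternating enumeration $k = 0, -1, 1, -2, 2, \dots$ that the $\xi$ operator prescribes. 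Two things then have to be verified: that the search terminates on every input, and that its value equals $\mu x'[x' \equiv_{f'} x]$.

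For correctness I would argue as follows. Let $x_0$ be the least element of the cycle of $x$. By the definition of normal form, $j \mapsto a(j) := f'^{\delta^{-1}(j)}(x_0)$ lists that cycle in strictly increasing order. Since $\delta$ from Definition~\ref{def:delta} satisfies $|\delta^{-1}(j) - \delta^{-1}(j-2)| = 1$ for $j \ge 2$ (and $\delta^{-1}(1)+1 = 0 = \delta^{-1}(0)$), the element $a(j-2)$ — respectively $x_0 = a(0)$ when $j = 1$ — is one of the two $f'$-neighbours $f'(a(j))$, $f'^{-1}(a(j))$ of the $j$-th term, and it is strictly smaller than $a(j)$; on the other hand $x_0$ lies weakly below both of its own $f'$-neighbours. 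Hence $x_0$ is the \emph{unique} member $y$ of its cycle with $y \le \min\{f'(y), f'^{-1}(y)\}$. As $k$ ranges over $\Z$ the values $f'^k(x)$ range over exactly this cycle, so the $\xi$-search succeeds precisely at a $k$ with $f'^k(x) = x_0$, giving $m(x) = x_0 = \mu x'[x' \equiv_{f'} x]$.

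Termination is then immediate: the cycle of $x$ is a non-empty subset of $\N$, hence has a least element $x_0 = f'^{k_0}(x)$ for some fixed $k_0 \in \Z$; the alternating enumeration reaches the $\delta$-rank of $k_0$ after finitely many steps, the predicate $p_x$ is defined at every $k$ visited (as $f'$ is total), and by the previous paragraph it is false at all of them before $x_0$ is hit. Therefore $m$ is total recursive and a program for it is produced uniformly from any program for $f'$, which is the assertion.

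I do not expect a genuine obstacle. The only delicate point is the uniqueness half of the structural claim — that the cycle minimum is the only local minimum — and this is exactly where normality (rather than mere semi-normality or arbitrariness) is used, via the $\delta$-bookkeeping that makes both directions away from $x_0$ increasing; for a semi-normal permutation an infinite cycle can have spurious local minima. As the surrounding text notes, one may also replace the linear alternating search by a binary search after first reading off from $f'(x)$ and $f'^{-1}(x)$ which of the two monotone directions contains $x_0$, yielding a running time logarithmic in the distance to the minimum, but this refinement is not needed for the statement as phrased.
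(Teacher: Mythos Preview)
Your proposal is correct and follows exactly the paper's own argument, which is the discussion immediately preceding the lemma (the lemma carries a \qed\ with no separate proof). One small inaccuracy in your closing remark: semi-normal permutations also have a unique local minimum in every cycle---the paper uses precisely this in the proof of Lemma~\ref{lemma:semiisnormal}---so normality over mere semi-normality is not what is essential here; but this aside does not affect your proof.
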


If the smallest element of each cycle can be found, the cycles of $f'$ can
be decided as shown in Theorem~\ref{theorem:cycledecchar}. To summarise:

\begin{theorem} \label{theorem:decnormal} Let $f \in \G$. $f$ has decidable
cycles iff $f$ is effectively conjugate to its normal form. A decider $\pi$
for $\Part f$ can be computed from $f'$. \qed
\end{theorem}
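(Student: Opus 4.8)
The plan is to prove the two implications separately; both turn out to be short assemblies of results already established, since the real work was done in Proposition~\ref{prop:existnormal} and Lemma~\ref{lemma:normalmin}.

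For the forward direction, I would assume $f$ has decidable cycles, so $\Pi := \Part f$ is a decidable equivalence, and it is permutable because $f$ itself witnesses $\Perm \Pi \neq \emptyset$. Feeding $f$ together with a decider $\pi$ for $\Pi$ into Proposition~\ref{prop:existnormal} produces a recursive normal permutation $f'$ with $\Part f' = \Part f = \Pi$; by Corollary~\ref{coro:hasuniquenormal} (uniqueness of the normal-form member of $\Perm \Pi$, which matches the definition of \emph{the} normal form of $f$), this $f'$ is precisely the normal form of $f$. Now $f$ and $f'$ both lie in $\Perm \Pi$, so Proposition~\ref{prop:permconj1} supplies a recursive permutation conjugating one to the other. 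Hence $f$ is effectively conjugate to its normal form.

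For the converse, suppose $f = h^{-1} f' h$ for some $h \in \G$, where $f'$ is the normal form of $f$. The first observation is that $f' = h f h^{-1}$ is then recursive, being a composition of recursive permutations; so $f'$ is a \emph{recursive} normal permutation and Lemma~\ref{lemma:normalmin} applies, yielding (uniformly in $f'$) the function $x \mapsto \mu x'[x' \equiv_{f'} x]$. This is exactly condition~(b) of Theorem~\ref{theorem:cycledecchar} for $f'$, so that theorem produces a decider for $\Part f'$ computed from $f'$. Finally, since the normal form is by definition the unique normal permutation with the same cycle partition as $f$, we have $\Part f = \Part f'$, so this decider also decides $\Part f$; thus $f$ has decidable cycles, and the promised $\pi$ is obtained from $f'$ via Lemma~\ref{lemma:normalmin} followed by the (b)$\Rightarrow$(a) step of Theorem~\ref{theorem:cycledecchar}.

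There is no genuine obstacle here — this is a summary theorem. The only point requiring a moment's care is the identity $\Part f = \Part f'$ in the converse: one must remember that ``the normal form of $f$'' denotes the partition-preserving normal permutation, not merely some normal permutation conjugate to $f$, since effective conjugation in general moves the cycle partition. Once that is kept in mind, and once one notes that ``$f \sim f'$'' with $f, h \in \G$ already forces $f' \in \G$, both directions close immediately.
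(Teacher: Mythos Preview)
Your proposal is correct and matches the paper's own argument essentially step for step: the forward direction via Proposition~\ref{prop:existnormal}/Corollary~\ref{coro:hasuniquenormal} followed by Proposition~\ref{prop:permconj1}, and the converse via recursiveness of $f' = hfh^{-1}$, Lemma~\ref{lemma:normalmin}, and the (b)$\Rightarrow$(a) step of Theorem~\ref{theorem:cycledecchar}. Your closing remark about the identity $\Part f = \Part f'$ is exactly the subtlety the paper is tacitly relying on.
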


The next results show that instead of effective conjugacy to the normal
form, conjugacy to any normal or semi-normal permutation is sufficient.

\begin{lemma} \label{lemma:conjnormal} If a recursive permutation $f$ is
effectively conjugate to a normal permutation $g$ via $h$,
i.e.~$f = h^{-1}gh$, then it is effectively conjugate to its normal form
$f'$ which can be computed from $f$ and $h$.
\end{lemma}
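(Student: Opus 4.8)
The plan is to convert the conjugator $h$ into a decider for $\Pi := \Part f$, and then assemble $f'$ and the conjugacy from results already proved for permutations with a known decider of their cycle equivalence.

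First I would note that $g = hfh^{-1}$ is a recursive permutation computable from $f$ and $h$. Being normal, $g$ has decidable cycles, with a decider extractable from $g$ alone: Lemma~\ref{lemma:normalmin} yields the smallest-representative function $\mu_g := \lambda x[\mu x'[x' \equiv_g x]]$ uniformly effectively in $g$, which is exactly condition~(b) of Theorem~\ref{theorem:cycledecchar}, so $\pi_g := \lambda xx'[\mu_g(x) = \mu_g(x')]$ decides $\Part g$ and is obtained uniformly effectively from $g$.

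Next I would observe that conjugation by $h$ carries each cycle $C$ of $g$ to the cycle $h^{-1}(C)$ of $f = h^{-1}gh$, whence $\Part f = h^{-1}(\Part g) = (\Part g)^{h^{-1}}$ in the notation of \parref{par:isoconj}. Applying the lemma of \parref{par:isoconj} that $\Pi^h$ is a decidable partition whenever $\Pi$ is decidable, with the permutation $h^{-1}$ in the role of $h$, the partition $\Part f$ is decidable and $\pi := \lambda xx'[\pi_g(h(x), h(x'))]$ is a decider for it, computed uniformly effectively from $\pi_g$ and $h$, hence from $f$ and $h$.

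Finally I would feed $f$ together with the decider $\pi$ into Proposition~\ref{prop:existnormal} to produce a recursive normal permutation $f'$ with $\Part f' = \Part f$, computable uniformly effectively from $f$ and $\pi$, and therefore from $f$ and $h$. Since $f'$ is a recursive normal permutation with the same associated partition as $f$, it is the normal form of $f$ by uniqueness (cf.~Corollary~\ref{coro:hasuniquenormal}). Both $f$ and $f'$ lie in $\Perm \Pi$, so Proposition~\ref{prop:permconj1} furnishes a recursive permutation conjugating $f$ to $f'$, computable from programs for $f$, $f'$ and the decider $\pi$ --- all of which are available from $f$ and $h$. No genuine obstacle arises; the argument is a routine assembly of earlier lemmata, the only delicate point being to keep track of which decider goes with which of $\Part f$, $\Part g$, $(\Part g)^{h^{-1}}$, and to plug $h^{-1}$ (not $h$) into the $\Pi^h$ construction.
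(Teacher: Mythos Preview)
Your proposal is correct and follows essentially the same route as the paper: compute $g=hfh^{-1}$, extract a decider for $\Part g$ from the normal permutation $g$, transport it through $h$ to a decider for $\Part f$, and then invoke Proposition~\ref{prop:existnormal}. The paper packages the extraction of the decider as an appeal to Theorem~\ref{theorem:decnormal} and the transport step as ``$h$ is a $(\Part f,\Part g)$-isomorphism'' via Theorem~\ref{theorem:permconj2}, whereas you unpack these into Lemma~\ref{lemma:normalmin}/Theorem~\ref{theorem:cycledecchar} and the $\Pi^{h^{-1}}$ lemma respectively; you also make the final conjugation step via Proposition~\ref{prop:permconj1} explicit, which the paper leaves implicit.
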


\begin{proof} It follows from the conjugation that the normal permutation
$g = hfh^{-1}$ is recursive, so that $\Part g \cong \Part f = \Part f'$ is
decidable; by Theorem~\ref{theorem:decnormal}, a decider $\gamma$ for
$\Gamma = \Part g$ can be found uniformly effectively from $g$. The proof
of Theorem~\ref{theorem:permconj2} shows that the conjugation $h$ is a
$(\Part f, \Part g)$-isomorphism. By Proposition~\ref{prop:existnormal} we
can compute $f'$ from $f$ using the decider $\lambda xx'[h(x) \Gamma h(x')]$
for $\Part f$ which is uniform in $\gamma$ and $h$.
\end{proof}

The semi-normal form has an incompatibility between the structure of finite
and infinite cycles. This incompatibility provides additional information:
it is possible to decide whether a number lies in a finite or an infinite
cycle, uniformly in the permutation. The next lemma shows that this
information can be discarded effectively to obtain a normal permutation in
the same effective conjugacy class. By Lemma~\ref{lemma:conjnormal}, it
follows that if a permutation $f$ is effectively conjugate to a semi-normal
permutation, it is also effectively conjugate to its normal form and
therefore decidable.

\begin{lemma} \label{lemma:semiisnormal} If $f$ is a recursive semi-normal
permutation, then it is effectively conjugate to its normal form $f'$, which
can be computed from $f$.
\end{lemma}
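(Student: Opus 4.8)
The plan is to get the full strength of the lemma directly from the semi-normal hypothesis by first proving that $f$ has decidable cycles with a decider computable from $f$, and then invoking the normal-form machinery of \parref{par:normal}. Concretely: (i) exhibit a least-element-of-cycle function that is uniform in $f$, (ii) read off decidability of $\Part f$ from Theorem~\ref{theorem:cycledecchar}, (iii) obtain the normal form $f'$ from Proposition~\ref{prop:existnormal}, and (iv) conclude effective conjugacy of $f$ and $f'$ from Proposition~\ref{prop:permconj1}. In particular I do \emph{not} expect to need the detour through Lemma~\ref{lemma:conjnormal}.

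The crux is step (i). I claim that, just as in the normal case discussed before Lemma~\ref{lemma:normalmin}, the alternating search
\[
  \mu x_0[x_0 \equiv_f x] = \xi f^k(x)\bigl[\, f^k(x) \le \min\{f^{k+1}(x),\, f^{k-1}(x)\} \,\bigr]
\]
computes the smallest element of the cycle of $x$, uniformly in a program for the recursive semi-normal permutation $f$. What has to be established is the purely combinatorial fact that in \emph{every} cycle of a semi-normal permutation --- finite or infinite --- the global minimum is the \emph{unique} element that is $\le$ both of its $f$-neighbours. For a finite cycle this is immediate from the semi-normal shape $\cycle{a_0, a_1, \dots, a_{n-1}}$ with $a_0 < a_1 < \cdots < a_{n-1}$: the $f$-predecessor of $a_k$ is $a_{k-1} < a_k$ whenever $1 \le k \le n-1$, so only $a_0$, whose neighbours are $a_1$ and $a_{n-1}$, qualifies (the cases $n = 1, 2$ are trivial). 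For an infinite cycle, $f$ restricted to it is in normal form, so the ``strictly convex'' analysis used for Lemma~\ref{lemma:normalmin} applies verbatim to that cycle, via the $\delta$-layout. Granting the uniqueness claim, termination of the $\xi$-search is clear: in a finite cycle the powers $f^k(x)$ run through the block periodically and reach $a_0$ within at most $2n$ alternating steps, while in an infinite cycle $k \mapsto f^k(x)$ is a bijection onto the block, so the search halts precisely at the one $k$ with $f^k(x) = a_0$.

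With the function above available, condition (b) of Theorem~\ref{theorem:cycledecchar} holds, so $\Part f$ is a decidable equivalence and a decider $\pi$ for it can be computed from $f$. Feeding $f$ and $\pi$ into Proposition~\ref{prop:existnormal} yields a recursive permutation $f'$ in normal form with $\Part f' = \Part f$, and a program for $f'$ is obtained uniformly effectively from $f$; by uniqueness of normal forms, this $f'$ is the normal form of $f$. Finally $f, f' \in \Perm(\Part f)$ with $\Part f$ decidable via $\pi$, so Proposition~\ref{prop:permconj1} supplies a recursive permutation conjugating $f$ to $f'$ (and that conjugation is itself obtainable from $f$, $f'$ and $\pi$, hence from $f$ alone). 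This proves the lemma.

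I expect step (i) to be the only real obstacle, and within it the single delicate point is that the \emph{same} alternating search must both succeed and terminate on finite and infinite cycles simultaneously --- which is exactly what the uniqueness of the local minimum in each cycle buys. As a byproduct one recovers the cycle-finiteness decidability mentioned in the text: once $m_f(x) := \mu x_0[x_0 \equiv_f x]$ has been computed, the cycle of $x$ is infinite iff $f^{-1}(m_f(x)) < f(m_f(x))$, and finite (of then-computable length) otherwise; but this observation is not needed for the statement itself.
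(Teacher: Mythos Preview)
Your proof is correct. Both your argument and the paper's share the same opening move --- the alternating search for the unique local minimum in a semi-normal cycle, which works uniformly on finite and infinite cycles alike --- but diverge thereafter. The paper uses the minimum $x_0$ to \emph{decide cycle finiteness} (by comparing $f^{-1}(x_0)$ with $f(x_0)$, exactly your byproduct) and then constructs $f'$ by hand: infinite cycles are already normal, finite cycles are listed out and rearranged per Figure~\ref{fig:normaldiag}. You instead feed the least-element function into Theorem~\ref{theorem:cycledecchar}(b) to get a decider $\pi$, then black-box Proposition~\ref{prop:existnormal} for $f'$ and Proposition~\ref{prop:permconj1} for the conjugation. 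Your route is more modular and avoids re-implementing the normal-form construction; the paper's route is more self-contained and makes the cycle-finiteness decider --- which is reused later in Theorem~\ref{theorem:cfdec} --- the explicit centrepiece rather than a side remark.
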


\begin{proof} We first note that the structure of a semi-normal permutation
allows to decide for every $x$ whether it is in a finite cycle or an
infinite one. Let $x$ be given. Again we inspect the neighbours $x^+ =
f(x)$ and $x^- = f^{-1}(x)$ of $x$. $x$ is the least element in the cycle,
in case of finite as well as infinite $[x]_f$, iff $x \le \min \{x^+,x^-\}$.
Using alternating search, we can again find the smallest element $x_0$ in
the cycle of $x$. By computing $x_0, f(x_0), f^2(x_0)$, we can tell if the
cycle length is $\le 2$. If it is, the cycle is, of course, finite. Else the
values $x_0^-, x_0, x_0^+$ are all distinct. If the cycle is infinite, it is
in normal form and it must be $x_0 < x_0^- < x_0^+$. If the cycle is finite,
it is in semi-normal form and conversely it must be $x_0 < x_0^+ < x_0^-$.
This gives a way to decide cycle finiteness.

To construct the normal form $f'$ of $f$, we determine for an input $x$ if
its cycle is finite or infinite. The infinite cycles are already normal
because $f$ is semi-normal. If the cycle is finite, we can generate a finite
list of all members of the cycle $[x]_f$ by repeated application of $f$ to
$x$. The appropriate image for $x$ to produce a normal cycle can be
determined according to Figure~\ref{fig:normaldiag}.
\end{proof}

\begin{corollary} If $f \in \G$ is effectively conjugate to a semi-normal
permutation, then $f$ has decidable cycles. \qed
\end{corollary}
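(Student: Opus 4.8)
The plan is to reduce the statement to the machinery already built for normal permutations. Suppose $f \in \G$ is effectively conjugate to a semi-normal permutation $g$, say $f = h^{-1}gh$ with $h \in \G$. The first step is to note that $g = hfh^{-1}$ is a composition of recursive permutations, hence itself recursive; so $g$ is in fact a \emph{recursive} semi-normal permutation, and Lemma~\ref{lemma:semiisnormal} is applicable to it. This upgrading of the hypothesis from ``semi-normal'' to ``recursive semi-normal'' is the only point that needs a moment's care; everything after it is bookkeeping.

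Next I would invoke Lemma~\ref{lemma:semiisnormal} to obtain that $g$ is effectively conjugate to its normal form, which is recursive; write $g = k^{-1}g'k$ for a recursive permutation $k$ and a recursive normal permutation $g'$. Composing the two conjugations gives $f = h^{-1}k^{-1}g'kh = (kh)^{-1}g'(kh)$, so $f$ is effectively conjugate, via the recursive permutation $kh$, to the normal permutation $g'$. Now Lemma~\ref{lemma:conjnormal}, applied with $g'$ in the role of its ``$g$'', yields that $f$ is effectively conjugate to its own normal form $f'$. Finally Theorem~\ref{theorem:decnormal} converts effective conjugacy to the normal form into decidability of $\Part f$, which is exactly the assertion.

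I do not expect a real obstacle: the corollary is a summary consequence of Lemmata~\ref{lemma:semiisnormal} and~\ref{lemma:conjnormal} together with Theorem~\ref{theorem:decnormal}, and the chain of deductions is short. If one wanted to record effectiveness as well, one would observe that each invoked result is effective, so a decider for $\Part f$ can be computed from a program for $f$, a program for $h$, and a program for $g$ (equivalently, from the data witnessing the effective conjugacy to the semi-normal permutation).
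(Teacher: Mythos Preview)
Your argument is correct and mirrors the paper's own reasoning, which is spelled out in the paragraph preceding the corollary: pass from the semi-normal $g$ to its normal form via Lemma~\ref{lemma:semiisnormal}, compose conjugations to see $f$ is conjugate to a normal permutation, then invoke Lemma~\ref{lemma:conjnormal} and Theorem~\ref{theorem:decnormal}. Your explicit check that $g = hfh^{-1}$ is recursive is the same step the paper makes inside the proof of Lemma~\ref{lemma:conjnormal}.
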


Semi-normal permutations are studied further in \parref{par:cyclefinite}.
The next subsection deals with permutability criteria for decidable
equivalences. Some sufficient conditions even yield permutability by a
semi-normal permutation, which provides further motivation for
\parref{par:cyclefinite}.

\paragraphtitle{Permutability} \parlabel{par:permutability} Recall that a
decidable equivalence is permutable if its blocks are the orbits of a
recursive permutation. Corollary~\ref{coro:hasuniquenormal} already
formulated a permutability condition which we restate as

\begin{theorem} The recursive normal permutations and the decidable and
permutable equivalences are in bijection, given by $\phi: g \mapsto
\Part g$.
\end{theorem}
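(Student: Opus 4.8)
The plan is to verify that $\phi$ is well-defined, injective and surjective, each step following quickly from the results already established in \parref{par:normal}.

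\emph{Well-definedness.} First I would check that $\Part g$ is a decidable and permutable equivalence whenever $g$ is a recursive normal permutation. Permutability is immediate: $g$ is itself a recursive permutation whose cycles are the blocks of $\Part g$, so $g \in \Perm \Part g$ and the latter set is non-empty. For decidability, note that $g$ is its own normal form, hence trivially (via $\id$) effectively conjugate to its normal form, so Theorem~\ref{theorem:decnormal} applies; alternatively, Lemma~\ref{lemma:normalmin} produces the smallest element of each cycle of $g$, whence condition (b) of Theorem~\ref{theorem:cycledecchar} is met and $\Part g$ is decidable. Thus $\phi$ indeed maps recursive normal permutations to decidable permutable equivalences.

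\emph{Injectivity.} Suppose $g_1$ and $g_2$ are recursive normal permutations with $\Part g_1 = \Part g_2$. Every block $P$ of this common partition admits exactly one arrangement of its elements into an increasing sequence, and normality of a permutation is precisely the demand that, with $a_0$ the least element of $P$, the sequence $\lambda j[f^{\delta^{-1}(j)}(a_0)]$ reproduce that arrangement on $P$. Hence $g_1$ and $g_2$ act identically on every block, so $g_1 = g_2$; this is the uniqueness of the normal form already recorded when it was defined.

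\emph{Surjectivity.} Let $\Pi$ be decidable and permutable. By Corollary~\ref{coro:hasuniquenormal}, $\Perm \Pi$ contains exactly one permutation $g$ in normal form. Being an element of $\Perm \Pi \subseteq \G$, this $g$ is a recursive normal permutation, and by definition of $\Perm \Pi$ its cycles are exactly the blocks of $\Pi$, so $\phi(g) = \Part g = \Pi$, and $g$ is the required preimage.

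I do not anticipate a genuine obstacle: the substantive work — exhibiting a normal recursive representative of a decidable permutable equivalence (Proposition~\ref{prop:existnormal} and Corollary~\ref{coro:hasuniquenormal}) and recognising that normal permutations have decidable cycles (Lemma~\ref{lemma:normalmin}, Theorem~\ref{theorem:decnormal}) — is already in place. The one place demanding a moment's care is injectivity, where one must appeal to the uniqueness of the increasing enumeration of each cycle to upgrade ``same orbits'' to ``same permutation''.
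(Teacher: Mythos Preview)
Your proposal is correct and follows essentially the same approach as the paper: well-definedness via Theorem~\ref{theorem:decnormal} and bijectivity via Corollary~\ref{coro:hasuniquenormal}. The only cosmetic difference is that the paper exhibits the inverse map $\Pi \mapsto$ (the unique normal element of $\Perm\Pi$) directly, whereas you verify injectivity and surjectivity separately; the underlying content is identical.
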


\begin{proof} This mapping is well-defined since the equivalence associated
to a recursive normal permutation is decidable by
Theorem~\ref{theorem:decnormal}. To see bijectivity it suffices to find an
inverse mapping. This inverse is the function which associates to every
decidable and permutable equivalence its, by
Corollary~\ref{coro:hasuniquenormal}, unique recursive normal permutation.
\end{proof}

It is sound to represent a decidable permutable equivalence $\Pi$ by any
pair consisting of a decider $\pi$ for $\Pi$ together with an element of
$\Perm \Pi$, as these are witnesses for the asserted properties of the
equivalence. Under this representation, the bijection and its inverse are
computable. In one direction, any recursive normal $f'$ maps to the pair
$(\pi, f')$ which represents $\Part f'$. As Theorem~\ref{theorem:decnormal}
shows, $\pi$ can be obtained from $f'$. In the other direction, $(\pi, f)$
maps to the normal form $f'$ of $f$, which is computable from $\pi$ and $f$
by Proposition~\ref{prop:existnormal}.

A closer inspection of the proof of Proposition~\ref{prop:existnormal} shows
that the algorithm to construct the normal element can be reformulated to
rely on the decidability of $\Pi$ and a way to determine for any $x$ if
there is a greater element in its block. A permutation $f \in \Perm \Pi$
provided such a method in the proof of Proposition~\ref{prop:existnormal}.
We proceed to show the converse: if such a method is available, there must
be an $f \in \Perm \Pi$:

\begin{theorem} \label{theorem:rhochar} Let $\Pi$ be decidable via $\pi$.
$\Pi$ is permutable iff the predicate $\rho(x) = [\exists x' > x: x' \Pi x]$
is recursive. Given $\pi$, such a function $\rho$ can be constructed from
the normal permutation for $\Pi$, and therefore, in the presence of $\pi$,
from any member of $\Perm \Pi$, and vice versa.
\end{theorem}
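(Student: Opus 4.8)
The plan is to prove the two directions separately, with the nontrivial direction being ``$\rho$ recursive $\Rightarrow$ $\Pi$ permutable''. The easy direction, ``$\Pi$ permutable $\Rightarrow$ $\rho$ recursive'', follows from the existence of the normal permutation: if $f \in \Perm \Pi$, then by Proposition~\ref{prop:existnormal} we obtain a recursive normal $f'$ with $\Part f' = \Pi$, computable from $\pi$ and $f$. For a normal permutation, Lemma~\ref{lemma:normalmin} lets us find the least element $x_0$ of the cycle of any $x$, and then the alternating-search procedure described before Lemma~\ref{lemma:normalmin} (inspecting neighbours to determine the direction towards the minimum, then searching) can equally well be run \emph{away} from the minimum: starting at $x$ and moving in the $f'$-increasing direction through the normal cycle, either we eventually return to $x_0$ (the cycle is finite and we have seen every element, so we can check whether any element exceeds $x$) or the cycle is infinite and strictly increasing past some point, so a greater element is certain to appear. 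Actually the cleanest route: since $f'$ is normal, the map $j \mapsto f'^{\delta^{-1}(j)}(x_0)$ is increasing for $0 \le j < |[x_0]_{f'}|$; so $\exists x' > x: x' \Pi x$ holds iff, writing $x = f'^{\delta^{-1}(j_x)}(x_0)$, either the cycle is infinite (always true then, as the sequence is unbounded) or the cycle is finite of length $n$ and $j_x < n-1$. One computes $x_0$, then walks the sequence $f'^{\delta^{-1}(0)}(x_0), f'^{\delta^{-1}(1)}(x_0), \dots$ until either it returns to $x_0$ (finite cycle, then compare the recorded maximum against $x$) or it first exceeds $x$ (answer \textbf{yes}); one of these must occur. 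This is uniform in $f'$ and hence, via Proposition~\ref{prop:existnormal}, uniform in $(\pi, f)$.

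For the hard direction, suppose $\rho(x) = [\exists x' > x : x' \Pi x]$ is recursive; the goal is to build $f \in \Perm \Pi$. The idea is to mimic the normal-form construction of Proposition~\ref{prop:existnormal}, but now the roles are reversed: there the permutation $f$ supplied, via Lemma~\ref{lemma:finitesetcycle}, a way to detect whether the finite ``downward closure'' $\{x_0 < \dots < x_k = x\}$ of $x$ inside its block is all of the block or not, and to find the next element up; here $\rho$ (together with $\pi$) must supply exactly that information directly. Given $x$, use $\pi$ to list $A = \{x_0 < x_1 < \dots < x_k = x\}$, the elements $\Pi$-equivalent to $x$ that are $\le x$. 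Then $\rho(x)$ tells us whether a larger element of the block exists; if it does, search $\hat x = \mu \hat x[\hat x > x \wedge \hat x \mathbin{\Pi} x]$, which terminates precisely because $\rho(x) = \fr t$. Thus we reconstruct exactly the data ``is $x$ the top of its block, and if not what is the next element up'' that drove the case analysis in Proposition~\ref{prop:existnormal}, and we define $f(x)$ by the same five-case formula (with $x_{k+1} := \hat x$, and $x_{k+2}$ obtained by re-running the search above $\hat x$, again governed by $\rho(\hat x)$). The same verification as in Proposition~\ref{prop:existnormal} shows $f$ is a recursive permutation whose cycles are exactly the blocks of $\Pi$, i.e.\ $f \in \Perm \Pi$, and the construction is uniform in $\pi$ and $\rho$.

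The ``vice versa'' clause asks for the converse uniformity inside this direction: from $\pi$ and any $f \in \Perm \Pi$ we can compute $\rho$. But that is exactly the easy direction established above. The main obstacle is really just the bookkeeping in the hard direction: one must check that the five-case definition of $f$ --- which was designed in Proposition~\ref{prop:existnormal} to produce a \emph{normal} cycle --- indeed gives a well-defined bijection on each block in both the finite and the infinite case, and that the ``next element up'' searches always terminate when and only when $\rho$ says so. Since Proposition~\ref{prop:existnormal} already carried out precisely this verification (its input data being logically equivalent to the pair $(\pi, \rho)$), it suffices to observe the match and invoke that proof; no genuinely new computation is needed. I would phrase the proof so that it explicitly reduces to re-reading Proposition~\ref{prop:existnormal} with $f$-supplied data replaced by $\rho$-supplied data, and note that the resulting $f$ is in fact the normal form for $\Pi$, recovering Corollary~\ref{coro:hasuniquenormal}.
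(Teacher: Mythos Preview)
Your proposal is correct and follows essentially the same route as the paper. For ``$\Leftarrow$'' you and the paper both simply re-read Proposition~\ref{prop:existnormal} with $\rho$ supplying the ``is there a larger element'' bit in place of Lemma~\ref{lemma:finitesetcycle}; this is identical. For ``$\Rightarrow$'' the paper also passes to the normal form $f'$ via Proposition~\ref{prop:existnormal} and uses Lemma~\ref{lemma:normalmin} to find $x_0$, but then makes a crisper check than your walk: it computes the index $k(x)$ with $a(k(x)) = x$ for $a(j) := f'^{\delta^{-1}(j)}(x_0)$ and observes that $\rho(x) = [a(k(x)) < a(k(x)+1)]$, a single comparison that works uniformly for finite and infinite blocks. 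Your search-until-exceed-or-return variant is correct (the return to $x_0$ does eventually occur in the finite case, e.g.\ at $j = 2n$), but the paper's one-step test is tidier and avoids the need to argue termination of the walk.
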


\begin{proof} ``$\Rightarrow$'': If $\Perm \Pi \not= \emptyset$ there is an
$f' \in \Perm \Pi$ in normal form. Let $x$ be given. By
Lemma~\ref{lemma:normalmin} we can find the smallest number $x_0$ in $P(x)$.
The function $a(j) := f'^{\delta^{-1}(j)}(x_0)$ enumerates $P(x)$ in
increasing order for $0 \le j < |P(x)|$. We can obtain the smallest index
$k = k(x)$ such that $a(k) = x$.

Suppose $P(x)$ is finite of length $n \in \N^+$. Since $a$ is increasing for
its first $n$ arguments, we have $a(n) \le a(n-1)$, because $a(n) \in P(x)$
and $a(n-1)$ is the greatest element therein. By definition $0 \le k(x) < n$.
Now, $x$ is the greatest element in the finite block iff $k(x) = n-1$ iff
$a(k(x)+1) \le ak(x)$, the latter of which can be checked without knowing
$n$. If the block is infinite, there cannot be a greatest element and indeed
$a(k(x)+1) > ak(x)$ will always hold. Therefore $[\exists x' > x: x' \Pi x]
= [ak(x) < a(k(x)+1)]$ gives a uniform way to compute $\rho$.

``$\Leftarrow$'': The construction is the same as in the proof of
Proposition~\ref{prop:existnormal}, except that $\rho$ is used instead of
the function $f$ there to decide $[\exists x' > x: x' \Pi x]$.
\end{proof}

The next goal is to obtain a non-permutable equivalence relation. This can
be achieved by encoding the computation of Turing machines well enough so
that decidability of the criterion in Theorem~\ref{theorem:rhochar} implies
decidability of the Halting Problem. The equivalence is defined over codings
of pairs $\code{x, n}$ where $x$ is a program and $n$ a step counter in the
computation $\varphi_x(x)$. This setting reveals a flaw of
Theorem~\ref{theorem:rhochar}: deciding $\code{x, n} > \code{x', n'}$
requires knowledge of the coding $\lambda xn[\code{x, n}]$. It would suffice
for the current purpose to fix the \emph{standard coding} of pairs
$\code{x, y} := \frac{1}{2}(x^2 + 2xy + y^2 + 3x + y)$ \cite[p.~64]{rogers87},
because it is strictly increasing in its second parameter. Such a fixation
is unpleasant and defining permutations over tuples is a useful tool in
general. We provide at least a variant of Theorem~\ref{theorem:rhochar}
which is independent of the coding of pairs and deals with the kind of
equivalence that is later considered in Proposition~\ref{prop:nonpermut}
and Lemma~\ref{lemma:infcycle}.

\begin{definition} An infinite family $\Pi_z$, $z \in \N$, of decidable
equivalences is \emph{uniformly decidable} if there is a recursive
function $\psi$ such that $\psi(z,x,x') = [x \Pi_z x']$. In this case
the \emph{coproduct equivalence} $\Pi$ defined by
\[
  \code{z,x}\Pi\code{z',x'} :\Leftrightarrow z = z' \wedge x \Pi_z x'
\]
is again decidable.
\end{definition}

The blocks of a coproduct $\Pi$ of a family $\Pi_z$ are of the form
$P(\code{z, x}, \Pi) = \{\code{z, x'} : x' \in P(x, \Pi_z)\}$, i.e.~the
blocks of $\Pi_z$ are prefixed by $z$, to make all blocks across the family
disjoint, and then $\Pi$ is the partition consisting of all these blocks.

\begin{corollary} \label{coro:pairrhochar} Let $\Pi$ be the coproduct of
a uniform family $\Pi_z$ of decidable equivalences. $\Pi$ is permutable iff
the predicate $\rho\code{z,x} = [\exists x' > x : x' \Pi_z x]$ is
recursive.
\end{corollary}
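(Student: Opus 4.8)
The plan is to deduce Corollary~\ref{coro:pairrhochar} from Theorem~\ref{theorem:rhochar} by observing that the coproduct equivalence $\Pi$ is decidable (this is part of the definition of uniform decidability) and that the criterion $\rho(w) = [\exists w' > w : w' \Pi w]$ from Theorem~\ref{theorem:rhochar} is, up to a computable change of variables, the same predicate as $\rho\code{z,x} = [\exists x' > x : x' \Pi_z x]$. Concretely, I would argue the equivalence of the two predicates in both directions and invoke the theorem.

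First I would recall that by hypothesis there is a recursive $\psi$ with $\psi(z,x,x') = [x \Pi_z x']$, so the coproduct decider $\pi\code{z,x}\code{z',x'} := [z = z'] \wedge \psi(z,x,x')$ is recursive; hence $\Pi$ is decidable and Theorem~\ref{theorem:rhochar} applies to it. Thus $\Pi$ is permutable iff the predicate $\rho_\Pi(w) := [\exists w' > w : w' \Pi w]$ is recursive, where the quantifier ranges over all natural numbers $w'$. Now fix $w = \code{z,x}$. Any $w'$ with $w' \Pi w$ must, by definition of the coproduct, be of the form $w' = \code{z,x'}$ for some $x'$ with $x' \Pi_z x$; so $\rho_\Pi\code{z,x} = [\exists x' : \code{z,x'} > \code{z,x} \wedge x' \Pi_z x]$. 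This almost matches the corollary's $\rho\code{z,x} = [\exists x' > x : x' \Pi_z x]$, the only discrepancy being that the inner comparison is $\code{z,x'} > \code{z,x}$ in one and $x' > x$ in the other.

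The key step is therefore to show that these two predicates are computationally interreducible, so that one is recursive iff the other is. This is where I would be careful, because in general the pairing function $\lambda zx[\code{z,x}]$ need not be monotone in its second argument, so $\code{z,x'} > \code{z,x}$ and $x' > x$ are not literally the same condition. The fix: the block $P(x,\Pi_z)$ is a recursive set (uniformly in $z$, via $\psi$), and if it is infinite then \emph{both} predicates are true (there are arbitrarily large $x'$ in the block, and $\code{z,\cdot}$ is injective so there are arbitrarily large $\code{z,x'}$ too); if it is finite then both predicates reduce to finite searches. More precisely, given $\rho$ as in the corollary, to compute $\rho_\Pi\code{z,x}$ one searches $x' = 0, 1, 2, \dots$ testing $x' \Pi_z x$; if $\rho\code{z,x}$ holds, the block is infinite, so eventually some $x'$ with $\code{z,x'} > \code{z,x}$ and $x' \Pi_z x$ is found and we answer $\fr t$; if $\rho\code{z,x}$ fails, the block is $\{x'' : x'' \le x, x'' \Pi_z x\}$ (a finite, explicitly computable set, as $x$ is its maximum), and we simply check whether any of its finitely many members $x''$ satisfies $\code{z,x''} > \code{z,x}$. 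Running these two semi-decision procedures in parallel (one assuming $\rho\code{z,x}$, one computing and using $\neg\rho\code{z,x}$ — but we have $\rho$ so we can just branch on it) decides $\rho_\Pi$. The reverse reduction, computing $\rho$ from $\rho_\Pi$, is symmetric: branch on $\rho_\Pi\code{z,x}$; if true, the block is infinite so $\rho\code{z,x} = \fr t$; if false, the block is finite with maximum $\code{z,\cdot}$-value $\code{z,x}$, enumerate its finitely many elements and test whether any exceeds $x$. Hence $\rho$ is recursive iff $\rho_\Pi$ is, and the corollary follows from Theorem~\ref{theorem:rhochar}.

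I expect the main obstacle to be precisely the non-monotonicity of the pairing function noted above: one must resist the temptation to claim $\rho_\Pi\code{z,x} = \rho\code{z,x}$ outright and instead route the argument through the observation that an infinite block makes both predicates true while a finite block makes both into the same effective finite search. Once that is in place the rest is bookkeeping with $\psi$ and the coproduct definition, and the appeal to Theorem~\ref{theorem:rhochar} is immediate.
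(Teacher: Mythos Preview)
Your overall strategy---apply Theorem~\ref{theorem:rhochar} to $\Pi$ itself and then show that the resulting predicate $\rho_\Pi(w) = [\exists w' > w : w' \Pi w]$ is inter-computable with the corollary's $\rho\code{z,x}$---is sound, but the argument you give for the inter-computability contains a genuine error. You claim that ``if $\rho\code{z,x}$ holds, the block is infinite'' and, in the reverse direction, that ``if $\rho_\Pi\code{z,x}$ is true, the block is infinite''. Neither follows: a two-element block $\{3,7\}$ in $\Pi_z$ makes $\rho\code{z,3}$ true without the block being infinite, and similarly for $\rho_\Pi$. Your branching therefore does not cover all cases, and the unbounded search you launch in the ``infinite'' branch need not terminate.

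The fix is to use $\rho$ (resp.\ $\rho_\Pi$) not as a test for infinitude but for what it actually provides: a way to enumerate the block one element at a time in increasing order of $x$ (resp.\ of code), halting exactly when the block is exhausted. Given $\rho$, list all $x'' \le x$ with $x'' \Pi_z x$, then repeatedly use $\rho$ to find the next larger element; at each step check whether $\code{z,x''} > \code{z,x}$. This terminates because a finite block is eventually exhausted, while an infinite block yields infinitely many distinct codes and hence one exceeding $\code{z,x}$. The reverse reduction is symmetric.

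The paper sidesteps the pairing-order issue entirely by never applying Theorem~\ref{theorem:rhochar} to $\Pi$. Instead it works component-wise: from $f \in \Perm \Pi$ extract $f_z := \lambda x[\pi_2 f\code{z,x}] \in \Perm \Pi_z$ uniformly in $z$, apply Theorem~\ref{theorem:rhochar} to each $\Pi_z$ to get $\rho_z$, and assemble $\rho\code{z,x} := \rho_z(x)$; conversely, from $\rho$ build each $f_z \in \Perm \Pi_z$ via Theorem~\ref{theorem:rhochar} and set $f\code{z,x} := \code{z,f_z(x)}$. This route is shorter and, as the paragraph preceding the corollary explains, is precisely designed to avoid any dependence on how the pairing function orders codes.
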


\begin{proof} ``$\Rightarrow$'': Let $f \in \Perm \Pi$. Uniform in $z$, we
obtain functions $f_z = \lambda x[\pi_2 f\code{z,x}] \in \Perm \Pi_z$,
where $\pi_2\code{z,x} := x$ denotes the projection of a pair on its second
component. By Theorem~\ref{theorem:rhochar}, there is a function $\rho_z$
for $\Pi_z$, which can be computed from $f_z$ and a decider $\pi_z$ for
$\Pi_z$, by Proposition~\ref{prop:existnormal} and
Theorem~\ref{theorem:rhochar}. $f_z$ and $\pi_z$, in turn, can be computed
uniformly effectively from $f$, $z$ and a uniform decider $\psi$ for the
family $\Pi_z$. Thus $\lambda\code{z,x}[\rho_z(x)]$ is recursive and has
the desired property.

``$\Leftarrow$'': For any fixed $z$, $\lambda x[\rho\code{z,x}]$ is a $\rho$
function for $\Pi_z$ as in Theorem~\ref{theorem:rhochar}. This gives a
permutation $f_z \in \Perm \Pi_z$ which can be constructed uniformly
effectively from $\rho$, $z$ and $\psi$. Then the function
$f\code{z,x} := \code{z,f_z(x)}$ is a recursive permutation and moreover a
member of $\Perm \Pi$.
\end{proof}

\begin{proposition} \label{prop:nonpermut} There exists a decidable
equivalence which is not permutable.
\end{proposition}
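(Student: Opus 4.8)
The plan is to construct the equivalence as a coproduct of a uniform family $\Pi_z$, $z \in \N$, of decidable equivalences and to apply Corollary~\ref{coro:pairrhochar}: the coproduct is permutable if and only if $\rho\code{z,x} = [\exists x' > x : x' \Pi_z x]$ is recursive, so it suffices to choose the family so cleverly that recursiveness of this $\rho$ would decide the Halting Problem. The natural choice is to encode, for each program $z$, the computation of $\varphi_z(z)$ by step counters. Concretely I would let each block of $\Pi_z$ be a singleton $\{n\}$ for $n$ below the halting time of $\varphi_z(z)$, and lump all remaining numbers into one block $\{n : \varphi_z(z)\text{ has not halted within }n\text{ steps}\}$ (which is either empty or a cofinite set, according to whether $z \in K$ or not). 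Since whether $\varphi_z(z)$ halts within $n$ steps is primitive recursive in $z$ and $n$, this family is uniformly decidable, so the coproduct $\Pi$ is a decidable equivalence.

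The key computation is then: for a given $\code{z,x}$, what is $\rho\code{z,x}$? If $z \notin K$, every block of $\Pi_z$ is a singleton $\{n\}$, so there is no larger element in the block of $x$, and $\rho\code{z,x} = \fr f$ for all $x$. If $z \in K$ with halting time $t$, then for $x < t$ the block $\{x\}$ is again a singleton so $\rho\code{z,x} = \fr f$, whereas for $x \ge t$ the block of $x$ is the finite(!) set $\{t, t+1, \dots\}$ — wait, one must be careful here: if the "non-halted-by-$n$-steps" block is to be \emph{finite} whenever $z \in K$ so that $\rho$ detects it, I would instead fold those $n$ together only when the computation \emph{has} halted by step $n$; that is, reverse the roles so that the distinguished block $\{n : \varphi_z(z) \downarrow \text{ within } n \text{ steps}\}$ is nonempty exactly when $z \in K$, and then it is infinite, with no greatest element, giving $\rho\code{z,x} = \fr f$ there too. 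The cleanest version: make the block of $x$ equal to $\{x\}$ if $\varphi_z(z)$ has \emph{not} halted by step $x$, and equal to the full set of "already-halted" step counts otherwise. Then $\rho\code{z,x} = \fr t$ precisely when $x$ lies in a non-singleton block that is \emph{not} the maximal one — but that set is an initial segment, hence has a greatest element iff it is finite, iff there is some later step count not in it, iff $z \notin K$ would fail... The reliable route is to arrange that for each $z$ there is \emph{exactly one} value $x = x(z)$ with $\rho\code{z,x} = \fr t$, and that this happens iff $z \in K$; e.g.\ block $0$ alone merges with block $1$ (so $\rho\code{z,0} = \fr t$) exactly at the moment $\varphi_z(z)$ halts, and all other blocks stay singletons. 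Then $z \in K \iff \rho\code{z,0} = \fr t$, so a decider for $\rho$ would decide $K$, a contradiction.

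The main obstacle, and the step demanding the most care, is exactly this bookkeeping: one must define the family $\Pi_z$ so that (i) it is genuinely uniformly decidable — whether two step counts are $\Pi_z$-related is decidable uniformly in $z$, which follows because "does $\varphi_z(z)$ halt within $N$ steps" is decidable, using only $N = \max$ of the two arguments; (ii) each $\Pi_z$ really is an equivalence (the merging is transitive because at most two designated blocks ever fuse, and only after the halting event); and (iii) the predicate $\rho\code{z,x}$ tracks $z \in K$ at a single witnessing value of $x$, so that no unbounded search over $x$ can evade the reduction. Once the family is pinned down, I would verify uniform decidability, invoke the definition of the coproduct to get that $\Pi$ is decidable, observe that a recursive $\rho$ as in Corollary~\ref{coro:pairrhochar} would yield a decision procedure for $K$, conclude $\rho$ is not recursive, and hence by that corollary $\Pi$ is not permutable. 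A brief closing remark can note that this simultaneously fills the upper-right cell of Figure~\ref{fig:symeqv} referenced as Proposition~\ref{prop:nonpermut}.
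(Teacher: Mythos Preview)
Your overall strategy matches the paper's: build the equivalence as a coproduct of step-counter partitions indexed by programs $z$ and invoke Corollary~\ref{coro:pairrhochar}. But the construction you finally settle on does not work. In your ``reliable route'' you merge $\{0\}$ with $\{1\}$ exactly when $\varphi_z(z)$ halts, keeping all other blocks singletons; this family is \emph{not} uniformly decidable, since deciding $0 \Pi_z 1$ requires deciding $z \in K$, not merely simulating $\max(0,1)=1$ steps. Your justification of point~(i) via bounded simulation applies only to your earlier constructions, where the block of $n$ depends on whether $\varphi_z(z)$ has halted by step $n$---but for those, $\rho\code{z,n}$ equals $[\text{$\varphi_z(z)$ halts within $n$ steps}]$, which is recursive, and Corollary~\ref{coro:pairrhochar} gives no contradiction. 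You are caught between a uniformly decidable family whose $\rho$ is also recursive, and a family whose $\rho$ encodes $K$ but which is itself undecidable.

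The paper breaks this deadlock with one extra move: define $r_z(0) := \fr t$ and $r_z(n) := [\text{$\varphi_z(z)$ halts in $\le n-1$ steps}]$ for $n \ge 1$, and let $\Pi_z$ be the fiber partition of $r_z$. Each value $r_z(n)$ needs only $n-1$ simulation steps, so the family is uniformly decidable; yet the block of $0$ is $r_z^{-1}(\fr t)$, which equals $\{0\}$ if $z \notin K$ and $\{0\} \cup \{n : n > t\}$ if $\varphi_z(z)$ halts at step $t$. Hence $\rho\code{z,0} = [z \in K]$. Planting $0$ artificially in the ``halted'' fiber from the outset is precisely what lets its block acquire a larger element exactly when halting occurs, while keeping every individual relation $m \Pi_z n$ decidable by bounded simulation.
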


\begin{proof} Define
\begin{align*}
  r_x'(n) &:= [\text{$\varphi_x(x)$ halts after $\le n$ steps}], \\
  r_x(n) &:= \begin{cases}
    \fr t, & n = 0, \\
    r_x'(n-1), & n >  0.
   \end{cases}
\end{align*}
Each $r_x$ is a recursive function which induces a decidable equivalence
$\Pi_x$, as per Proposition~\ref{prop:recdec}. Indeed this family of
equivalences is uniformly decidable because $[r_x(n) = r_x(n')]$ can be
decided uniformly in $x, n, n'$, using a universal Turing machine. Let
$\Pi$ be the coproduct of this family.

\emph{Assume} that $\Pi$ is permutable so that
Corollary~\ref{coro:pairrhochar} yields a recursive function
$\rho\code{x, n} = [\exists n' > n : n' \Pi_x n]$. In particular for
$n = 0$, we can decide $[\exists n' \ge 1 : r_x(n') = \fr t]$ uniformly in
$x$, i.e.~whether $\varphi_x(x)$ halts eventually. This \emph{contradicts}
the undecidability of the Halting Problem.
\end{proof}

\begin{theorem} \label{theorem:rhoperm} Let $\Pi$ be decidable via $\pi$ and
let there be a recursive function $\rho$ such that
\[
  \rho(x) = \begin{cases}
    0, & |P(x)| = \infty, \\
    |P(x)|, & \text{else}. \\
   \end{cases}
\]
Then $\Pi$ is permutable by a semi-normal permutation which can be
constructed from $\pi$ and $\rho$.
\end{theorem}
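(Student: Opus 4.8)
My plan is to build $f$ one block at a time, reading off the size of each block from $\rho$ and arranging that block into a cycle according to the semi-normal recipe: plain increasing order for finite blocks, and the $\delta$-layout of Lemma~\ref{lemma:buildcycle} for infinite blocks. The key observation making this painless is that the value $f(x)$ I will define depends only on $\pi$, $\rho$ and the block $P(x)$; hence no consistency issue can arise and $f$ is automatically a well-defined total function. It will be a permutation because on each block it restricts, by construction, to a cyclic permutation of that block.

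Concretely, given $x$ I would first compute $\rho(x)$. If $\rho(x) = n \ge 1$, the block $P(x) = \{y : \pi(x,y) = \fr t\}$ is finite with exactly $n$ elements, so scanning $y = 0, 1, 2, \dots$ and testing $\pi(x,y)$ produces, after $n$ hits, the increasing list $a_0 < a_1 < \dots < a_{n-1}$ of all elements of $P(x)$; the scan terminates precisely because $\rho$ guarantees the count. Locating the index $i$ with $a_i = x$, I set $f(x) := a_{i+1}$ if $i < n-1$ and $f(x) := a_0$ if $i = n-1$. Then $[x]_f = \cycle{a_0, \dots, a_{n-1}}$ and $\lambda j[f^j(a_0)] = a_j$ is increasing for $0 \le j < n$, so this cycle is in semi-normal form (a lone fixed point when $n = 1$).

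If $\rho(x) = 0$ the block $P(x)$ is an infinite recursive set, and I would apply the construction of Lemma~\ref{lemma:buildcycle} to it verbatim: put $a(0) := \mu y[\pi(x,y)]$ and $a(n+1) := \mu y[y > a(n) \wedge \pi(x,y)]$, which is total because $P(x)$ is infinite, and then $f(x) := a\delta(\delta^{-1}a^{-1}(x)+1)$, where $a^{-1}(x)$ is obtained by computing $a(0), a(1), \dots$ until the value $x$ turns up. By Corollary~\ref{coro:buildcyclenormal} this makes $[x]_f$ a normal cycle with support $P(x)$.

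Every step above is effective in $\pi$ and $\rho$, so a program for $f$ is produced uniformly effectively from $\pi$ and $\rho$. On each block $f$ is a cyclic permutation of that block, so $f$ is a recursive permutation with $\Part f = \Pi$: for an infinite block the powers $k \mapsto f^k(x_0)$ exhaust the block by Lemma~\ref{lemma:buildcycle}, for a finite block this is immediate, so $f \in \Perm\Pi$ by Lemma~\ref{lemma:permcrit}; in particular $\Pi$ is permutable. Finally $f$ is semi-normal by construction, its infinite cycles being normal and each finite cycle increasing from its least element. I do not expect a genuine obstacle here; the only points needing care are that the finite blocks must be laid out in plain increasing order (not the $\delta$-order used for infinite cycles) so that the two cycle types together fit the semi-normal definition, and that the enumeration of a finite block terminates --- which is exactly the information $\rho$ supplies.
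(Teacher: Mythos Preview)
Your proof is correct and follows essentially the same approach as the paper: use $\rho$ to branch on whether the block is finite or infinite, in the finite case enumerate the $\rho(x)$ members via $\pi$ and lay them out in increasing order, and in the infinite case invoke Lemma~\ref{lemma:buildcycle} with Corollary~\ref{coro:buildcyclenormal}. Your write-up is somewhat more explicit about the formulas for $f(x)$ in each case, but the structure and the key ideas are the same.
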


Without the last addition that there be a semi-normal permutation in $\Perm
\Pi$, a proof would have been immediate from Theorem~\ref{theorem:rhochar}.
Theorem~\ref{theorem:cfdec} in \parref{par:cyclefinite} shows that the
converse of Theorem~\ref{theorem:rhoperm} holds, too. Together with other
results from \parref{par:cyclefinite}, this shows that there are
permutable equivalences which lack a semi-normal element, which is why
Theorem~\ref{theorem:rhoperm} cannot be inferred from
Theorem~\ref{theorem:rhochar} and needs a separate proof.

\begin{proof} Let $x$ be given. With a decider for $\Pi$, a decider for
$P(x)$ can be found uniformly in $x$, by Lemma~\ref{lemma:sigmachar}. Using
$\rho$, it is decidable whether $P(x)$ is finite or infinite. If it is
finite, all elements of $P(x)$ can be found by testing $x' \in P(x)$ until
$\rho(x)$ members are found. These numbers can easily be arranged into a
finite semi-normal cycle. If the cycle is infinite, it is an infinite
decidable set and Lemma~\ref{lemma:buildcycle} plus
Corollary~\ref{coro:buildcyclenormal} provide a method to construct the
infinite semi-normal (i.e.~normal) cycle.
\end{proof}

\begin{corollary} \label{coro:condf} If $\Pi$ is decidable, then each of the
following conditions is sufficient for the permutability of $\Pi$ by a
semi-normal element:
\begin{compactenum}[\hskip 2em(a)]
\item $\Pi$ has only finitely many blocks,
\item $\Pi$ has only blocks of the same cardinality (including $\aleph_0$).
\end{compactenum}
\end{corollary}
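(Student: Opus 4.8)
The plan is to reduce both cases to Theorem~\ref{theorem:rhoperm}: it suffices, in each of (a) and (b), to exhibit a \emph{recursive} function $\rho$ with $\rho(x) = |P(x)|$ when $P(x)$ is finite and $\rho(x) = 0$ when $P(x)$ is infinite. Once such a $\rho$ is in hand, Theorem~\ref{theorem:rhoperm} produces a semi-normal permutation for $\Pi$ directly from $\pi$ and $\rho$, which is exactly the assertion. So the whole argument amounts to checking that the cardinality function $\rho$ is available in each case.

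For (b) I would observe that $\rho$ can be taken to be a constant function. If every block of $\Pi$ is infinite, put $\rho(x) := 0$ for all $x$; if every block has the same finite cardinality $m \in \N^+$, put $\rho(x) := m$ for all $x$. In either situation $\rho$ is recursive and has the shape required by Theorem~\ref{theorem:rhoperm}. Since we only need the \emph{existence} of a semi-normal element, it is harmless that we cannot in general decide which of the two sub-cases obtains — in both, $\rho$ is a recursive constant, and the conclusion follows immediately.

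For (a), let $n$ denote the finite number of blocks and fix representatives $x_1, \dots, x_n$, one from each block. This is a finite amount of data, which is admissible here because the corollary asserts only permutability, not an effective construction of the semi-normal permutation from $\pi$ alone. Given $x$, exactly one index $i$ satisfies $x \Pi x_i$, and it is located by testing $\pi(x, x_1), \dots, \pi(x, x_n)$; thus the block of $x$ is identified recursively. Hard-wiring, for each $i$, whether $P(x_i)$ is infinite and if not its cardinality — again finite data — one defines $\rho(x)$ to return $0$ or the recorded cardinality according to the block containing $x$. This $\rho$ is a total recursive function of the required form, and Theorem~\ref{theorem:rhoperm} completes the argument.

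I do not anticipate a genuine obstacle here: the substantive work — converting the cardinality information $\rho$ into an honest semi-normal permutation (arranging finite blocks into finite semi-normal cycles and building infinite blocks via Lemma~\ref{lemma:buildcycle} and Corollary~\ref{coro:buildcyclenormal}) — is already done inside Theorem~\ref{theorem:rhoperm}. The only point worth a sentence is, in case (a), verifying that $\rho$ is recursive, which rests solely on the block of $x$ being determined by the decidable predicates $\pi(x, x_i)$ together with a fixed finite lookup table of cardinalities.
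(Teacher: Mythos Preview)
Your proposal is correct and follows essentially the same route as the paper: both cases are reduced to Theorem~\ref{theorem:rhoperm} by exhibiting the recursive cardinality function $\rho$, with (b) handled by a constant $\rho$ and (a) by hard-wiring a finite list of block representatives together with their cardinalities. The only cosmetic difference is that the paper identifies the block of $x$ via the least-representative function $y(x) := \mu y[y \in P(x)]$ rather than by directly testing $\pi(x,x_i)$, but this is immaterial.
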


\begin{proof} We show that there is a recursive function $\rho$ as in
Theorem~\ref{theorem:rhoperm}.

\begin{inparaenum}[(a)]
\item Let $\Pi = \{P_1, \dots, P_n\}$, $p_i = |P_i|$ and $y_i \in P_i$
arbitrary representatives. Using $y(x) := \mu y[y \in P(x)]$ we can find
the unique index $i$ such that $y(y_i) = y(x)$, then set $\rho(x) := p_i$.
The program for $\rho$ only needs to include the subroutine $y(x)$ and the
finitely many constants $y_i$ and $p_i$, $1 \le i \le n$.

\item The function $\rho$ returning the cardinality of a block is constant
and therefore recursive.
\end{inparaenum}
\end{proof}

It is easy to construct counterexamples to the converses of
Corollary~\ref{coro:condf} and Proposition~\ref{prop:fininfdec}
from~\parref{par:cycledec}. There is a recursive semi-normal permutation,
implying decidable cycles, with infinitely many infinite cycles and one
finite cycle. It is therefore a simultaneous counterexample to the
converses of the corollary and the proposition. Consider the partition $\Pi$
whose blocks consist of all numbers $\ge 2$ which have the same smallest
prime factor, and $0$ and $1$ form another block together. This is a
decidable partition. By Theorem~\ref{theorem:rhoperm} (with $\rho(0) :=
\rho(1) := 2$ and $\rho(x) := 0$ else) we see that $\Pi$ has a semi-normal
$f \in \Perm \Pi$ which is a simultaneous counterexample.

It was shown in the proof of Lemma~\ref{lemma:semiisnormal} that the
cycle structure of a recursive semi-normal permutation makes it possible to
decide the finiteness of the cycle of any given number $x$.
Theorem~\ref{theorem:rhoperm} has a similar direction and also involves
semi-normal permutations. \parref{par:cyclefinite} deals with cycle
finiteness and its relation to semi-normality more systematically.

\paragraphtitle{An order-theoretic characterisation of $\Perm \Pi$}
\parlabel{par:orderperm} This section characterises the elements of $\Perm
\Pi$ for a fixed decidable equivalence $\Pi$ as the maximal elements with
respect to cycle inclusion inside a normal subgroup of $\Aut \Pi$. The merit
of this theorem is that Recursion Theory only appears in the setting; the
characterisation itself makes no use of the language of computability. For
the basic order-theoretic notions needed here, see e.g.~\cite{schroeder16}.

Let $\I\Pi := \{f \in \Aut \Pi : f(P) = P \; \forall P \in \Pi\}$ denote
the set of \emph{block-wise identical} permutations in $\Aut \Pi$. Any
$f \in \Perm \Pi$ achieves $[x]_f = P(x, \Pi)$, which implies $f(P) = P$
for every block $P \in \Pi$. This means $\Perm \Pi \subseteq \I\Pi$ and
furthermore $\I\Pi$ is a normal subgroup of $\Aut \Pi$. For an alternative
view on $\I\Pi$, define a \emph{refinement} of an equivalence $\Pi$ to be an
equivalence $\Pi'$ such that $x \Pi' x' \Rightarrow x \Pi x'$. In this case
we write $\Pi' \le \Pi$. No decidability requirements are attached to this
notion and, in this subsection, the symbol $\Perm \Pi$ shall not imply that
$\Pi$ is decidable. Then $\I\Pi = \bigcup_{\Pi' \le \Pi} \Perm \Pi'$. The
finest and coarsest equivalences, e.g., yield $\I\{\{0\}, \{1\}, \dots\} =
\{\id\}$ and $\I\{\N\} = \G$, but the automorphism group is $\G$ in both
cases.

The relation
\[
  f \lesssim g \,:\Leftrightarrow\, \forall x: [x]_f \subseteq [x]_g
\]
is a preorder on $\I\Pi$. It becomes antisymmetric if the permutations
are collapsed to their cycle equivalences, i.e. if one disregards the
sequence of elements in the cycles of a permutation. By $f \lnsim g$ we
mean $f \lesssim g$ and $g \not\lesssim f$. A permutation $f \in \I\Pi$
is \emph{maximal} if there is no $g \in \I\Pi$ such that $f \lnsim g$,
i.e. $f$ is an upper bound on all elements it is comparable to. The set
of maximal elements of $\I\Pi$ is written $\max \I\Pi$.

\begin{theorem} If $\Pi$ is decidable, then $\Perm \Pi = \max \I\Pi$.
\end{theorem}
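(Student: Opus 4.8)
The plan is to establish the two inclusions $\Perm\Pi\subseteq\max\I\Pi$ and $\max\I\Pi\subseteq\Perm\Pi$ separately; the first follows at once from the definitions, and the second is the substantial direction, whose heart is a one‑step cycle‑fusion construction by a transposition. For $\Perm\Pi\subseteq\max\I\Pi$ I would argue: any $f\in\Perm\Pi$ lies in $\I\Pi$ (already observed) and satisfies $[x]_f=P(x,\Pi)$ for every $x$, while every $g\in\I\Pi$ fixes each block setwise, so each $g$‑cycle lies inside one block and $[x]_g\subseteq P(x,\Pi)$. Thus if $f\lesssim g$ for some $g\in\I\Pi$, then $[x]_f\subseteq[x]_g\subseteq P(x,\Pi)=[x]_f$, whence $[x]_g=[x]_f$ for all $x$, $g\lesssim f$, and $f\not\lnsim g$; so $f\in\max\I\Pi$. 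This direction uses no computability.

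For $\max\I\Pi\subseteq\Perm\Pi$ I would argue by contraposition: given $f\in\I\Pi\setminus\Perm\Pi$, produce $g\in\I\Pi$ with $f\lnsim g$. Since $f\notin\Perm\Pi$, some block $P$ of $\Pi$ satisfies $[x_0]_f\subsetneq P$ for an $x_0\in P$; as $f\in\I\Pi$ leaves $P$ invariant, $P$ is a union of $f$‑cycles, and being distinct from the single cycle $[x_0]_f$ it contains two different $f$‑cycles, say $a\in C_1$ and $b\in C_2$ with $C_1\cup C_2\subseteq P$. Set $h:=\cycle{a,b}f$. The standard fact that multiplying a permutation by a transposition of two points lying in different cycles fuses exactly those two cycles into one and leaves every other cycle untouched shows that the cycles of $h$ are those of $f$ with $C_1$ and $C_2$ replaced by $C_1\cup C_2$; hence $[x]_f\subseteq[x]_h$ for all $x$, with $[a]_f=C_1\subsetneq C_1\cup C_2=[a]_h$, so $f\lnsim h$.

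It then remains to check $h\in\I\Pi$, and this is the one point that requires care — the modest obstacle of the proof. Here $h$ is a recursive permutation (a composition of the recursive $f$ with a finitary permutation), and, crucially because $a$ and $b$ were chosen in one and the \emph{same} block $P$, the transposition $\cycle{a,b}$ maps every block of $\Pi$ onto itself, and so does $f$; hence $h$ leaves every block invariant. Consequently $\Part h\le\Pi$, and since $h\in\G$ this gives $h\in\Perm(\Part h)\subseteq\I\Pi$ by the description $\I\Pi=\bigcup_{\Pi'\le\Pi}\Perm\Pi'$. The non‑effective selection of $a$ and $b$ is harmless, since the statement only asserts the \emph{existence} of a dominating $g$ and $h=\cycle{a,b}f$ is recursive however $a,b$ are found; decidability of $\Pi$ enters only indirectly, through the ambient inclusions $\I\Pi\subseteq\Aut\Pi\subseteq\G$. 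This contradiction completes the inclusion and hence the proof.
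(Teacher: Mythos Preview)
Your first inclusion is correct and essentially the paper's argument. The second direction, however, has a genuine gap: the ``standard fact'' that a transposition of points in different cycles fuses those cycles into one is a statement about \emph{finite} cycles and fails when both $C_1$ and $C_2$ are infinite. In that case $h=\cycle{a,b}f$ still has two infinite cycles on $C_1\cup C_2$, each made of a forward $f$-ray from one cycle glued to a backward $f$-ray from the other; this is precisely case~(c) of Figure~\ref{fig:transmult}. Neither new cycle contains $C_1$ nor $C_2$, so $f\not\lesssim h$ and your contradiction does not materialise. A concrete instance: take $\Pi=\{\N\}$ and let $f$ have the evens and the odds as its two cycles.

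This is not repairable by choosing a cleverer finitary $a$: if $af$ had $C_1\cup C_2$ as a single bi-infinite cycle, that orbit would eventually agree with $f$ in each direction and hence cover only one forward half-ray and one backward half-ray, missing infinitely many points. The paper's proof avoids the issue by using the decidability of $\Pi$ directly: the offending block $P(z)$ is a recursive set, and Corollary~\ref{coro:condf} (via Lemma~\ref{lemma:buildcycle}) yields a recursive permutation $c$ whose support is a single cycle on all of $P(z)$; overwriting $f$ by $c$ on $P(z)$ gives an $f'\in\I\Pi$ with $f\lnsim f'$. Your closing remark that decidability enters ``only indirectly'' is thus exactly where the argument breaks---it is needed substantively to build the dominating permutation when the block to be fused consists of infinite cycles.
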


\begin{proof} First suppose $\Pi$ is permutable. Let $f \in \I\Pi$ and
$g \in \Perm \Pi$ be arbitrary. From $[x]_f \subseteq P(x)$ it follows that
the restriction $f|_P$ is a permutation of $P$ for every block $P \in \Pi$.
This means that $P$ is a union of cycles of $f$. Since $g \in \Perm \Pi$,
we know $[x]_g = P(x)$ and thus $f \lesssim g$. This shows that every
element of $\Perm \Pi$ is an upper bound on $\I\Pi$ and in particular
maximal. On the other hand, if $g$ is maximal in $\I\Pi$ and $\Pi$ is
permutable, there exists an $f \in \Perm \Pi$ and by the first part of the
proof $g \lesssim f$. Maximality of $g$ then implies $f \lesssim g$, i.e.
$P(x) = [x]_f \subseteq [x]_g \subseteq P(x)$ and it follows equality
everywhere and $g \in \Perm \Pi$.

If $\Pi$ is non-permutable, we wish to show that there are no maximal
elements. Given any $f \in \I\Pi$, we construct a permutation in $\I\Pi$
which is comparable to and strictly greater than $f$. Since
$f \not\in \Perm \Pi$, there is an $z$ such that $[z]_f \subsetneq P(z)$.
Since $P(z)$ is a union of cycles of $f$, $P(z)$ must be the union of at
least two cycles of $f$. Because $\Pi$ is decidable, $P(z)$ is a
recursive set. The equivalence $\{P(z), \overbar{P(z)}\}$ is evidently
decidable and has finitely many blocks, which is a sufficient permutability
condition. By Corollary~\ref{coro:condf} we obtain a permutation $c$, one of
whose cycles is $P(z)$. Then
\[
  f'(x) := \begin{cases}
    f(x), & x \not\in P(z), \\
    c(x), & x \in P(z),
   \end{cases}
\]
replaces the multitude of cycles in $f$, which make up $P(z)$, by a single
cycle. $f'$ is in $\I\Pi$ and strictly greater than $f$.
\end{proof}

The proof shows the slightly stronger statement that every member of
$\Perm \Pi$ is not only a maximum of $\I\Pi$ but also an \emph{upper bound},
i.e.~it is maximal and comparable to every member of~$\I\Pi$. We also
remark that finding the $f' \gnsim f$ in the second part of the proof was
an instance of permutability. We had to find a permutable, not necessarily
decidable equivalence $\Part f'$ such that $\Part f \lneq \Part f' \le \Pi$.
The proof above shows that it is not hard to order individual blocks of an
equivalence into a single cycle of a recursive permutation, at least if the
block is decidable and its size is known (cf.~Corollary~\ref{coro:condf}).
The hard part of permutability is ordering infinitely many blocks into
cycles simultaneously. The proof gives the following picture of the order in
$\I\Pi$ if $\Pi$ is decidable but not permutable: every chain in $\I\Pi$
with a maximal element can be extended by adding a strictly greater element
which assembles one further block of $\Pi$ into a single cycle. From this
perspective at least, chains grow along the blocks of $\Pi$, and $\Pi$ has
infinitely many blocks as it is not permutable.

\begin{corollary} $\Perm = \bigcup_{\text{$\Pi$ dec.}} \max \I\Pi$. \qed
\end{corollary}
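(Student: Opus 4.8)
The plan is to read this off directly from the preceding theorem together with the definition of $\Perm$. By definition, $\Perm = \bigcup_{\Pi \text{ dec.}} \Perm \Pi$, the union ranging over all decidable equivalences $\Pi$. The immediately preceding theorem asserts that for each individual decidable $\Pi$ we have $\Perm \Pi = \max \I\Pi$. So I would simply substitute this equality into each summand of the union.

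Concretely, the argument is the two-step chain
\[
  \Perm = \bigcup_{\Pi \text{ dec.}} \Perm \Pi = \bigcup_{\Pi \text{ dec.}} \max \I\Pi,
\]
where the first equality is the definition of $\Perm$ and the second is the preceding theorem applied termwise. Since both unions are indexed over exactly the same collection — the decidable equivalences — there is no mismatch of index sets to worry about, and the equality of the two unions follows from the equality of the corresponding terms.

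There is no real obstacle here: the content has already been discharged in the theorem, and all that remains is to take a union on both sides. The only thing worth a moment's thought is that $\max \I\Pi$ in the theorem is computed under the hypothesis that $\Pi$ is decidable, but this causes no trouble because we are only ever forming the union over precisely those $\Pi$ for which the hypothesis holds. Hence the statement is immediate, which is why it is recorded as a corollary with no separate proof.
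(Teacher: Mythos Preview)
Your proposal is correct and matches the paper's approach exactly: the corollary is marked with \qed\ and given no separate proof, precisely because it follows by substituting the preceding theorem $\Perm\Pi=\max\I\Pi$ termwise into the defining union $\Perm=\bigcup_{\Pi\text{ dec.}}\Perm\Pi$.
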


\section{Cycle finiteness and unsolvable problems} \label{sec:cfunsolv}

This last section proves negative answers to algorithmic questions
surrounding $\Perm$. The \emph{cycle finiteness problem} is introduced
and it is shown that it is in general unsolvable for permutations with
decidable cycles. The subset $\Permcf$ of $\Perm$ where cycle finiteness
is decidable is characterised by semi-normal permutations. It is shown
that cycle decidability and cycle finiteness problems in $\G$ are
intertwined by one-one reductions and that the maximum one-one degree of
either problem is the Halting Problem.

Furthermore it is shown that conjugacy in $\Perm$ is undecidable, that
$\Perm$ is not enumerable, and that it is not closed under multiplication.
Lastly while $\Perm$ is closed under multiplication with finitary
permutations, it can be shown that there is no constructive proof of this
fact, assuming that a finitary permutation $a$ is encoded as a list of
transpositions, whose product is $a$, and each transposition is encoded as
an ordered pair. A constructive proof can be given for $\Permcf$.

\paragraphtitle{Cycle finiteness} \parlabel{par:cyclefinite} We begin by
constructing a permutation for which it is undecidable if numbers lie in a
finite or an infinite cycle. Such a permutation has already been described
in \cite{lehton09}, by encoding the Halting Problem for Turing machines
into the cycle length. Indeed cycle finiteness is the prototype of Collatz'
original problem, which is the motivation of Lehtonen's paper. His
permutation has the additional property that it can be described using a
case distinction on a decidable partition with 5 blocks, and each case has
the form of an affine-linear function. The construction below will use the
general theory developed so far, which makes it swift but does not yield
similar properties.

\begin{definition} For $f \in \G$, the \emph{cycle finiteness problem} of
$f$ is the decision problem
\[
  \textsc{CF}(f) := \{x : |[x]_f| < \infty\}.
\]
The subset of $\Perm$ consisting of permutations with decidable cycle
finiteness problem is denoted $\Permcf$.
\end{definition}

We use a slight modification of the proof of
Proposition~\ref{prop:nonpermut}. Define the $x$-indexed family of recursive
predicates $r_x'(n)$ by
\[
  r_x'(n) = [\text{$\varphi_x(x)$ halts after $\le n$ steps}].
\]
The family $\Pi_x$ of equivalences which correspond to these recursive
functions via Proposition~\ref{prop:recdec} is uniformly decidable by a
universal Turing machine, which makes their coproduct $\Pi$ a decidable
equivalence. The interpretation of the blocks of $\Pi$ is as follows:
$\code{x,n}$ and $\code{x',n'}$ are in the same block iff they belong to
the same program, $x = x'$, and either both computations (after $n$ and
$n'$ steps) did not halt yet or both halted.

We want to show that $\rho\code{x, n} = [\exists n' > n : r_x'(n) = r_x'(n')]$
is recursive in order to apply Corollary~\ref{coro:pairrhochar}. To given
$\code{x, n}$, simulate the computation $\varphi_x(x)$ for $n+1$ steps. If
it halts after $\le n$ steps, it also halts after $\le n+1$ steps, so
$r_x'(n) = r_x'(n+1)$ and $\rho\code{x,n} = \fr t$. If it halts at the
$(n+1)$-st step, then $r_x'(n') \not= r_x'(n)$ for all $n' > n$ and
$\rho\code{x,n} = \fr f$. The remaining case is that the computation did
not halt after $n+1$ steps, in which case it did not halt after $\le n$
steps either, and $r_x'(n) = r_x'(n+1)$, $\rho\code{x,n} = \fr t$. By
Corollary~\ref{coro:pairrhochar}, there is a member $g \in \Perm \Pi$.

\emph{Assume}, we could decide $|[\code{x,n}]_g| < \infty$, for every pair
$\code{x,n}$. Let a program $x$ be given. Then we could decide whether
$\code{x,0}$ lies in a cycle of finite length, which is the same as
deciding whether the computation of $\varphi_x(x)$ halts eventually.
This \emph{contradicts} the undecidability of the Halting Problem.

\begin{lemma} \label{lemma:infcycle} There is a $g \in \Perm \setminus
\Permcf$. \qed
\end{lemma}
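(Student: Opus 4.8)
The plan is to realise $g$ as a permutation attached to a decidable, permutable equivalence $\Pi$ arranged so that the block through a distinguished point $\code{x,0}$ is finite precisely when $\varphi_x(x)$ halts. Then any $g\in\Perm\Pi$ automatically lies in $\Perm$, while deciding its cycle finiteness problem would let us decide, for every program $x$, whether the cycle of $\code{x,0}$ is finite --- equivalently whether $\varphi_x(x)$ halts --- which is impossible.

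Concretely, for each program $x$ I would take the recursive predicate $r_x'(n) := [\text{$\varphi_x(x)$ halts after $\le n$ steps}]$ and let $\Pi_x$ be the equivalence whose blocks are the non-empty fibres of $r_x'$, as supplied by Proposition~\ref{prop:recdec}; a universal machine witnesses that $(\Pi_x)_x$ is uniformly decidable, so the coproduct $\Pi$ is decidable. Since $r_x'$ is non-decreasing, $\Pi_x$ has a single block when $\varphi_x(x)$ never halts and exactly two blocks --- a finite initial segment of ``not yet halted'' indices followed by a cofinite block of ``already halted'' indices --- when it does, so the block of $\code{x,0}$ in $\Pi$ is finite if and only if $\varphi_x(x)$ halts (the degenerate case of a halt in zero steps is ruled out by any reasonable step count, and only the existence of $g$ is asserted anyway).

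The step I expect to be the crux is permutability, to be obtained from Corollary~\ref{coro:pairrhochar}: one must show that $\rho\code{x,n} := [\exists n' > n : r_x'(n) = r_x'(n')]$ is recursive, which superficially resembles the Halting Problem. The way around it is that monotonicity of $r_x'$ makes only the successor $n' = n+1$ relevant --- on input $\code{x,n}$, simulate $\varphi_x(x)$ for $n+1$ steps: if it has not halted, or halted within the first $n$ steps, then $r_x'(n) = r_x'(n+1)$ and we output $\fr t$; if it halts exactly at step $n+1$, then $r_x'$ has switched to $\fr t$ for good, so no later index matches the value $\fr f = r_x'(n)$ and we output $\fr f$. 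This is plainly computable, so Corollary~\ref{coro:pairrhochar} yields a $g \in \Perm\Pi \subseteq \Perm$, and the reduction of the Halting Problem to $\textsc{CF}(g)$ indicated above shows $g \notin \Permcf$.
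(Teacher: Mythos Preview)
Your proposal is correct and follows essentially the same approach as the paper: the paper also uses the family $r_x'(n) = [\text{$\varphi_x(x)$ halts after $\le n$ steps}]$, forms the coproduct equivalence via Proposition~\ref{prop:recdec}, verifies permutability through Corollary~\ref{coro:pairrhochar} by the same simulate-for-$n{+}1$-steps case analysis you give, and then reduces the Halting Problem to $\textsc{CF}(g)$ via the cycle of $\code{x,0}$. Your explicit acknowledgement of the zero-step degenerate case is a small addition the paper glosses over.
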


Define the decision problem $\textsc{CF*}$ as follows: given $f \in \Perm$,
a decider $\pi$ for the cycles of $f$ and a number $x$, it is to decide
whether $|[x]_f| < \infty$. A fortiori, this problem is recursively
unsolvable. The \emph{diagonal} problem $\textsc{$\Delta$CF*}$ of
$\textsc{CF*}$ asks, given a program which computes a permutation with
decidable cycles, and a decider for the cycles, if that program itself is in
a finite or an infinite cycle of the permutation. This problem may be
thought of as the $\Perm$ version of the one-parameter Halting Problem
$K := \{x : \text{$\varphi_x(x)$ halts}\}$. Using the Recursion Theorem, one
can reduce the seemingly more general problem $\textsc{CF*}$ to its diagonal
and obtains

\begin{corollary} $\textsc{$\Delta$CF*}$ is recursively unsolvable. \qed
\end{corollary}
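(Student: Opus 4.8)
The plan is to derive the unsolvability of $\textsc{$\Delta$CF*}$ from the unsolvability of $\textsc{CF*}$ by a standard application of the Recursion Theorem, producing a many-one reduction $\textsc{CF*} \le_m \textsc{$\Delta$CF*}$. Concretely, a triple for $\textsc{CF*}$ consists of (a G\"odel number for) a recursive permutation $f$, a decider $\pi$ for its cycles, and a number $x$; we must produce from these a program computing a recursive permutation with decidable cycles, together with a decider for those cycles, so that \emph{that program itself} lies in a finite cycle of the new permutation iff $|[x]_f| < \infty$.

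First I would fix a recursive pairing-style encoding that lets a single program carry the data $(f,\pi,x)$. The idea is to transplant the cycle of $x$ under $f$ onto a designated element --- which, by the Recursion Theorem, can be taken to be the program's own G\"odel number. Given inputs $(f,\pi,x)$ and a would-be self-reference $e$, define a permutation $g_{e}$ on $\N$ as follows: remove the element $e$ from wherever it currently sits and also remove $\mu x'[\pi(x,x')]$, the least cycle representative of $x$ (computable from $\pi$), then splice $e$ into the cycle of $x$ in place of that representative, i.e.\ let $g_{e}$ act on the modified domain exactly as $f$ does after renaming. More carefully: let $x_0 = \mu x'[\pi(x,x')]$ and let $\tau$ be the transposition $(e\ x_0)$ (if $e = x_0$ take $\tau = \id$); set $g_{e} := \tau f \tau$. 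Then $g_e$ is recursive uniformly in $(f,\pi,x,e)$, its cycles are decided by $\lambda y y'[\pi(\tau(y),\tau(y'))]$ (also uniform), and the cycle of $e$ under $g_e$ has the same cardinality as $[x_0]_f = [x]_f$. By the Recursion Theorem (in its uniform, parametrised form, \cite[\textsection\,11.2]{rogers87}) there is a computable $h$ such that $\varphi_{h(f,\pi,x)}$ computes $g_{h(f,\pi,x)}$; packaging $h(f,\pi,x)$ together with the (uniformly obtained) decider for its cycles yields an instance of $\textsc{$\Delta$CF*}$, and the reduction is evidently many-one. Since $\textsc{CF*}$ is recursively unsolvable, so is $\textsc{$\Delta$CF*}$.

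The main obstacle is bookkeeping rather than conceptual: one must check that $\tau f \tau$ is genuinely a recursive permutation of all of $\N$ (it is, being a conjugate of $f$ by a finitary permutation) and that its cycle decider is correctly obtained by conjugating $\pi$, and above all that the self-reference is installed consistently --- the program $\varphi_{\mathbf p}$ must, on input any $y$, first recover the fixed parameters $(f,\pi,x)$ and its own number $\mathbf p$, then compute $\tau(y)$ with $\tau = (\mathbf p\ x_0)$, apply $f$, and apply $\tau$ again. The subtlety is that $x_0$ depends only on $(x,\pi)$ and not on $\mathbf p$, so the construction is well-founded; were $x_0$ to depend on $\mathbf p$ the Recursion-Theorem fixpoint could fail to close up. One should also note the edge case where $\mathbf p$ already happens to equal $x_0$, handled by $\tau = \id$, and the case where $\mathbf p$ lies in the same $f$-cycle as $x_0$, which is harmless since $\tau f\tau$ is still a permutation with the cycle of $\mathbf p$ inheriting the right cardinality. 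With these points verified the corollary follows.
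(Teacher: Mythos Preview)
Your proof is correct and follows exactly the approach the paper sketches in the sentence preceding the corollary: reduce $\textsc{CF*}$ to its diagonal via the Recursion Theorem by arranging that the self-referential index lands in (a conjugate copy of) the cycle of $x$. Two minor remarks: the detour through $x_0$ is unnecessary --- swapping $e$ with $x$ itself already works --- and your worry that the fixpoint could ``fail to close up'' if $x_0$ depended on $\mathbf p$ is misplaced, since the Recursion Theorem applies to any total recursive map $e \mapsto F(e)$ regardless of how $F$ uses $e$; neither point affects correctness.
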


We can improve upon the inclusion $\G_1 \subseteq \Perm$ from
Proposition~\ref{prop:fininfdec}, by using essentially the same technique
as in that proof.

\begin{proposition} \label{prop:fininfcf} Every permutation with finitely
many infinite cycles has decidable cycle finiteness problem, i.e.
$\G_1 \subseteq \Permcf$.
\end{proposition}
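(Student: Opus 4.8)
The plan is to imitate the proof of Proposition~\ref{prop:fininfdec}: that proposition already gives $\G_1 \subseteq \Perm$, so the cycles of $f$ are decidable and the only thing left to establish is that $\textsc{CF}(f)$ is a recursive set. As in Proposition~\ref{prop:fininfdec}, I would fix a system of representatives $x_1, \dots, x_n$ for the finitely many infinite cycles of $f$ and hard-code these finitely many constants into the algorithm; this is the reason the resulting decider is not uniform in $f$.

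On input $x$, first test whether $x \in \{x_1, \dots, x_n\}$; if so, $x$ lies in an infinite cycle and we answer $x \notin \textsc{CF}(f)$. Otherwise perform the alternating search $k := \xi k[k \neq 0 \wedge f^k(x) \in \{x, x_1, \dots, x_n\}]$, which is legitimate since $f \in \G$ makes $f^{-1}$ and hence every $f^k$ recursive, and the bracketed predicate is decidable for each $k$. This search terminates: if $[x]_f$ is finite, then $f^k(x) = x$ for $k = |[x]_f| \neq 0$, while no $f^k(x)$ can equal any $x_i$ (those lie in infinite cycles); if $[x]_f$ is infinite, then $[x]_f = [x_i]_f$ for exactly one $i$, so $f^k(x) = x_i$ for some $k \neq 0$, while $f^k(x) = x$ is impossible for $k \neq 0$. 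Thus exactly one of the two alternatives occurs at the value $k$ found: report $x \in \textsc{CF}(f)$ if $f^k(x) = x$, and $x \notin \textsc{CF}(f)$ if $f^k(x) = x_i$ for some $i$. The characteristic function so obtained decides $\textsc{CF}(f)$, whence $f \in \Permcf$.

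There is no serious obstacle; the only points needing care are that the construction is genuinely non-uniform (the representatives cannot be extracted from $f$, since cycle finiteness is undecidable in general by Lemma~\ref{lemma:infcycle}), that the degenerate case $x = x_i$ must be split off before the search, and that in a finite cycle the search can only be stopped by returning to $x$ itself. One could alternatively package the termination-and-correctness bookkeeping as a case analysis in the style of Proposition~\ref{prop:fininfdec}, replacing its ``same cycle'' queries by ``finite cycle'' queries; the version above is marginally shorter because only the single point $x$ is being classified.
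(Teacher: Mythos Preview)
Your proof is correct and follows the same idea as the paper: hard-code a system of representatives $x_1,\dots,x_n$ for the infinite cycles and decide whether $x$ lies in one of them. The paper's version is slightly more economical: having already invoked Proposition~\ref{prop:fininfdec} to get a cycle decider $\pi$, it simply tests $x \equiv_f x_i$ for each $i$ via $\pi$ rather than re-running an alternating search.
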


\begin{proof} Let $f \in \G_1$ and $x_1, \dots, x_n$ be a system of
representatives for the infinite cycles of $f$. By
Proposition~\ref{prop:fininfdec}, $f$ has decidable cycles. Given a number
$x$, we can decide if $x$ belongs to any of the cycles $[x_1]_f, \dots,
[x_n]_f$. This is the case iff $|[x]_f| = \infty$.
\end{proof}

Theorem~\ref{theorem:rhoperm} stated that if a decidable equivalence $\Pi$
possesses a recursive function $\rho$ which returns to each $x$ the size of
$P(x)$, or $0$ if $P(x)$ is infinite, then $\Pi$ is permutable by a
semi-normal element. We see now that not every decidable permutable
equivalence has such a function $\rho$. Take the function $g$ from
Lemma~\ref{lemma:infcycle}: $\Part g$ is decidable and permutable. If such a
function $\rho$ existed for $\Part g$, the computable function
$\lambda x[\rho(x) \not= 0]$ would decide $\textsc{CF}(g)$, which is
impossible. The next theorem links cycle finiteness to the conjugacy classes
of semi-normal permutations:

\begin{theorem} \label{theorem:cfdec} Let $f \in \Perm$. Then the following
statements are equivalent:
\begin{compactenum}[\hskip 2em(a)]
\item $\textsc{CF}(f)$ is decidable,
\item there is a $\rho$ function as in Theorem~\ref{theorem:rhoperm} for
$\Part f$, and
\item $f$ is effectively conjugate to its semi-normal form.
\end{compactenum}
\end{theorem}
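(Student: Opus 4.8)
The plan is to prove the cycle of implications (a) $\Rightarrow$ (b) $\Rightarrow$ (c) $\Rightarrow$ (a), reusing machinery already assembled in the paper. For (b) $\Rightarrow$ (c), I would invoke Theorem~\ref{theorem:rhoperm}: the $\rho$ function for $\Part f$ produces a semi-normal permutation $f''$ with $\Part f'' = \Part f$, constructed uniformly from $\rho$ and a decider $\pi$ for $\Part f$ (which exists since $f \in \Perm$). But $f''$ need not be the unique semi-normal form of $f$. However, $f$ and $f''$ belong to the same $\Perm \Pi$, hence are effectively conjugate by Proposition~\ref{prop:permconj1}; and the semi-normal form of $f$ is also in $\Perm \Pi$ (it has the same cycle partition by definition), so $f$ is effectively conjugate to its semi-normal form. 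For (c) $\Rightarrow$ (a): if $f = h^{-1}gh$ with $g$ semi-normal, then $g = hfh^{-1}$ is recursive, and the argument in the proof of Lemma~\ref{lemma:semiisnormal} shows $\textsc{CF}(g)$ is decidable uniformly in $g$ (by inspecting the smallest element of each cycle and testing whether $x_0 < x_0^- < x_0^+$ or $x_0 < x_0^+ < x_0^-$). Since $h$ is a bijection preserving cycles, $x \in \textsc{CF}(f) \Leftrightarrow h(x) \in \textsc{CF}(g)$, so $\textsc{CF}(f)$ is decidable too.

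The main work is (a) $\Rightarrow$ (b): given $f \in \Perm$ with $\textsc{CF}(f)$ decidable, I must construct the $\rho$ function returning $|P(x)|$ when finite and $0$ when infinite. Since $f \in \Perm$, a decider $\pi$ for $\Pi = \Part f$ is available, and using $\pi$ together with the decider for $\textsc{CF}(f)$ one proceeds as follows. Given $x$, first decide whether $x \in \textsc{CF}(f)$. If $x \notin \textsc{CF}(f)$, output $0$. If $x \in \textsc{CF}(f)$, the cycle $[x]_f$ is finite, and its members can be enumerated by computing $x, f(x), f^2(x), \dots$ until a repetition occurs; the number of distinct values so obtained is $|[x]_f| = |P(x)|$, which we output. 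Actually, the cleanest route uses $f$ directly: iterate $f$ from $x$ and count steps until returning to $x$ — this halts precisely because the cycle is finite, which we have already confirmed. So $\rho$ is total recursive. No appeal to normality is needed in this direction; all that matters is that $f$ itself lets us walk around a cycle and that the finiteness oracle tells us the walk will terminate.

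The obstacle I anticipate is mostly bookkeeping about \emph{which} semi-normal permutation appears in statement (c) and ensuring the conversions are effective in the promised inputs; the paper's convention (established in the discussion after Theorem~\ref{theorem:permconj2} and in Corollary~\ref{coro:permnormal}) is that effective conjugacy to a canonical form means the whole conjugacy class, which side-steps the issue. One should also double-check the uniformity claims in Theorem~\ref{theorem:rhoperm} and in the $(c) \Rightarrow (a)$ step so that, as in the other normal-form theorems, one can extract a decider from the semi-normal representative alone. I would phrase the proof compactly:

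\begin{proof}
``(a) $\Rightarrow$ (b)'': Let $\pi$ decide $\Part f$ (available since $f \in \Perm$) and suppose $\textsc{CF}(f)$ is decidable. Given $x$, decide whether $x \in \textsc{CF}(f)$. If not, set $\rho(x) := 0$. If so, the cycle $[x]_f$ is finite; compute $x, f(x), f^2(x), \dots$ and let $\rho(x)$ be the least $n \ge 1$ with $f^n(x) = x$, which equals $|[x]_f| = |P(x)|$. This $\mu$ search terminates precisely because $[x]_f$ is finite, so $\rho$ is total recursive and has the form required in Theorem~\ref{theorem:rhoperm}.

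``(b) $\Rightarrow$ (c)'': Let $\rho$ be as in Theorem~\ref{theorem:rhoperm} for $\Pi := \Part f$. Since $f \in \Perm$, $\Pi$ is decidable via some $\pi$; by Theorem~\ref{theorem:rhoperm} there is a semi-normal $g \in \Perm \Pi$, constructible from $\pi$ and $\rho$. In particular the semi-normal form $f'$ of $f$, which also lies in $\Perm \Pi$, satisfies $\Part f' = \Pi = \Part g$. By Proposition~\ref{prop:permconj1}, $f$ and $f'$ are effectively conjugate, with a conjugation computable from $\pi$ and programs for $f$ and $f'$ (and $f'$ is computable from $f$ and $\pi$ by Proposition~\ref{prop:existnormal} applied after Lemma~\ref{lemma:semiisnormal}, or directly by the semi-normal analogue of Proposition~\ref{prop:existnormal}).

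``(c) $\Rightarrow$ (a)'': Let $f = h^{-1}gh$ with $g$ recursive semi-normal. As in the proof of Lemma~\ref{lemma:semiisnormal}, for each $x$ one finds by alternating search the least element $x_0$ of $[x]_g$; computing $x_0, g(x_0), g^2(x_0)$ tells whether $|[x]_g| \le 2$, and otherwise the comparison of $g^{-1}(x_0), x_0, g(x_0)$ decides whether $[x]_g$ is finite (if $x_0 < g(x_0) < g^{-1}(x_0)$, semi-normal finite) or infinite (if $x_0 < g^{-1}(x_0) < g(x_0)$, normal infinite). Hence $\textsc{CF}(g)$ is decidable, uniformly in $g$. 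Since $h$ is a recursive permutation with $[x]_f \mapsto h([x]_f) = [h(x)]_g$, we have $x \in \textsc{CF}(f) \Leftrightarrow h(x) \in \textsc{CF}(g)$, so $\textsc{CF}(f)$ is decidable.
\end{proof}
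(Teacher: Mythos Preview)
Your proof is correct and follows the same (a) $\Rightarrow$ (b) $\Rightarrow$ (c) $\Rightarrow$ (a) route as the paper, with the same key ingredients (iterating $f$ to count a finite cycle, Theorem~\ref{theorem:rhoperm}, and the finite/infinite test from Lemma~\ref{lemma:semiisnormal}). The only detour is your worry in (b) $\Rightarrow$ (c) that the semi-normal $g$ produced by Theorem~\ref{theorem:rhoperm} might differ from the semi-normal form $f'$: by the uniqueness clause in the definition of semi-normal form they are equal, so $f' = g$ is recursive and Proposition~\ref{prop:permconj1} applies directly---this also makes the conjugation $h$ in your (c) $\Rightarrow$ (a) unnecessary, since $\Part f = \Part f'$ already gives $\textsc{CF}(f) = \textsc{CF}(f')$.
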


\begin{proof} ``(a) $\Rightarrow$ (b)'': Write $\Pi = \Part f$ and $P(x) =
P(x, \Pi)$ as usual. By the assumption we can compute $[|P(x)| < \infty]$.
Let $x$ be given. If $|P(x)| = \infty$, then report $\rho(x) := 0$. Else
$P(x) = [x]_f$ is a finite set which we can enumerate by powers of $f$ on
$x$: determine $n = \mu n[n \ge 1 \wedge f^n(x) = x]$. Then $n$ is the
length of $[x]_f$ and we report correctly $\rho(x) := n$.

``(b) $\Rightarrow$ (c)'': Given $\rho$, Theorem~\ref{theorem:rhoperm}
yields a recursive semi-normal element $f'$ in $\Perm \Pi$, which is the
semi-normal form of $f$. Both permutations are recursive and hence
effectively conjugate by Proposition~\ref{prop:permconj1}.

``(c) $\Rightarrow$ (a)'': Let $f'$ be the semi-normal form of $f$. Because
$f$ and $f'$ are effectively conjugate, $f'$ is recursive. The proof of
Lemma~\ref{lemma:semiisnormal} shows that a recursive semi-normal permutation
can be used to solve its own cycle finiteness problem. Since $\Part f =
\Part f'$, it follows that $\textsc{CF}(f) = \textsc{CF}(f')$ is decidable.
\end{proof}

\begin{corollary} $\Permcf$ is the union of effective conjugacy classes of
recursive semi-normal permutations. \qed
\end{corollary}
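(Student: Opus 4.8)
The plan is to read the statement off Theorem~\ref{theorem:cfdec}, using only that effective conjugation is witnessed by a recursive permutation, which relabels cycles bijectively and therefore preserves both recursiveness and cycle lengths. I would establish the two inclusions separately.

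For ``$\subseteq$'': given $f \in \Permcf$, the decidability of $\textsc{CF}(f)$ together with the implication ``(a)~$\Rightarrow$~(c)'' of Theorem~\ref{theorem:cfdec} yields that $f$ is effectively conjugate to its semi-normal form $f'$. Being a conjugate of the recursive permutation $f$, the permutation $f'$ is itself recursive, hence is a recursive semi-normal permutation, and $f \in [f']_\sim$. Thus $f$ lies in the union of effective conjugacy classes of recursive semi-normal permutations.

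For ``$\supseteq$'' I would first verify that every recursive semi-normal permutation $g$ is itself in $\Permcf$: the corollary following Lemma~\ref{lemma:semiisnormal} gives that $g$ has decidable cycles, so $g \in \Perm$, and the explicit procedure in the proof of Lemma~\ref{lemma:semiisnormal} shows that $\textsc{CF}(g)$ is decidable, so $g \in \Permcf$. It then remains to observe that $\Permcf$ is closed under $\G$-conjugation, which forces $[g]_\sim \subseteq \Permcf$. This closure follows because $\Perm$ is normal (Corollary~\ref{coro:permnormal}) and, inside $\Perm$, decidability of cycle finiteness is a conjugacy invariant: if $f = h^{-1}gh$ with $h \in \G$, then $f^k = h^{-1}g^k h$ for every $k \in \Z$, hence $[x]_f = h^{-1}\bigl([h(x)]_g\bigr)$ and $|[x]_f| = |[h(x)]_g|$, so $x \in \textsc{CF}(f) \Leftrightarrow h(x) \in \textsc{CF}(g)$; composing $h$ with a decider for $\textsc{CF}(g)$ then decides $\textsc{CF}(f)$.

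I do not expect a genuine obstacle: all the work is already done in Theorem~\ref{theorem:cfdec} and Lemma~\ref{lemma:semiisnormal}, and only the bookkeeping above remains. The single point that deserves a moment of care is that the semi-normal form of $f$ is defined purely combinatorially from $\Part f$ and is not \emph{a priori} recursive; it is the conjugation furnished by Theorem~\ref{theorem:cfdec} that certifies its recursiveness in the ``$\subseteq$'' direction.
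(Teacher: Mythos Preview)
Your proposal is correct and matches the paper's intended reading: the corollary is marked \qed\ with no separate proof, i.e.\ it is meant to fall out of Theorem~\ref{theorem:cfdec} exactly as you unpack it. Your explicit verification that $\Permcf$ is closed under $\G$-conjugacy (via $x \in \textsc{CF}(f) \Leftrightarrow h(x) \in \textsc{CF}(g)$) is the natural way to handle the ``$\supseteq$'' direction, since the theorem only speaks of conjugacy to the \emph{own} semi-normal form rather than to an arbitrary recursive semi-normal permutation; this small gap is precisely what your conjugacy-invariance argument fills.
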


\begin{corollary} The recursive semi-normal permutations and the decidable,
permutable equivalences with decidable block finiteness problem are in
bijection via $g \mapsto \Part g$. \qed
\end{corollary}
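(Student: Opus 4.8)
The plan is to establish the three ingredients of a bijection for the map $\phi\colon g \mapsto \Part g$, now restricted to the recursive semi-normal permutations, by collating Theorem~\ref{theorem:cfdec}, Lemma~\ref{lemma:semiisnormal} and the uniqueness of the semi-normal form. First I would check that $\phi$ is well-defined, i.e.\ that $\Part g$ is decidable, permutable and has decidable block finiteness problem whenever $g$ is a recursive semi-normal permutation. Permutability is immediate since $g \in \Perm \Part g$. By Lemma~\ref{lemma:semiisnormal}, $g$ is effectively conjugate to its normal form, which is therefore recursive, so $\Part g$ is decidable by Theorem~\ref{theorem:decnormal} (equivalently, $g \in \Permcf$). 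The proof of Lemma~\ref{lemma:semiisnormal} also shows that a recursive semi-normal permutation decides its own cycle finiteness problem; since the blocks of $\Part g$ are precisely the cycles of $g$, deciding $\textsc{CF}(g)$ is the same as deciding the block finiteness problem of $\Part g$.

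For injectivity I would invoke the uniqueness of the semi-normal form recorded in the definition in \parref{par:normal}: if $g$ and $h$ are recursive semi-normal permutations with $\Part g = \Part h =: \Pi$, then both are \emph{the} semi-normal permutation having cycle partition $\Pi$, hence $g = h$.

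For surjectivity, let $\Pi$ be a decidable, permutable equivalence whose block finiteness problem is decidable, and pick any $f \in \Perm \Pi$, which exists by permutability. Then $\textsc{CF}(f)$ coincides with the block finiteness problem of $\Pi$, since the cycles of $f$ are the blocks of $\Pi$; in particular $\textsc{CF}(f)$ is decidable. By Theorem~\ref{theorem:cfdec}, condition (a) implies condition (c), so $f$ is effectively conjugate to its semi-normal form $f'$. Being effectively conjugate to the recursive permutation $f$, the permutation $f'$ is itself recursive, it is semi-normal by construction, and $\Part f' = \Part f = \Pi$. Hence $\phi(f') = \Pi$, establishing surjectivity.

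Since the argument is a straightforward reassembly of results already proved in \parref{par:normal} and \parref{par:cyclefinite}, I do not expect a genuine obstacle; the only point requiring a little care is the identification of the block finiteness problem of a permutable equivalence $\Pi$ with the cycle finiteness problem of any $f \in \Perm \Pi$, which is immediate once one recalls that $\Part f = \Pi$ means the cycles of $f$ are exactly the blocks of $\Pi$.
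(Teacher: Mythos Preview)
Your proposal is correct and is exactly the argument the paper leaves implicit behind its bare \qedsymbol: it parallels the earlier bijection theorem for normal permutations, with Theorem~\ref{theorem:cfdec} (and the uniqueness of the semi-normal form) supplying well-definedness, injectivity, and surjectivity in the obvious way. The only remark is that surjectivity could also be obtained slightly more directly from Theorem~\ref{theorem:rhoperm} by building the required $\rho$ from the block-finiteness decider, but your route through Theorem~\ref{theorem:cfdec} is equally valid and arguably closer to the paper's intended shortcut.
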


\paragraphtitle{Conjugacy and enumerability} \parlabel{par:conjenum}
As shown by the Theorems~\ref{theorem:permconj2}, \ref{theorem:decnormal}
and \ref{theorem:cfdec}, conjugacy in $\Perm$ is equivalent to the
solvability of certain decision problems. We proceed to prove that
conjugacy, and therefore the solvability of these problems, cannot be
decided.

To a $g \in \Perm$ define the problem $\textsc{Conj}(g)$, which asks,
given $f \in \Perm$ and a decider for its cycles, to decide whether
$f \sim g$. We describe a permutation $g$ for which this problem is
unsolvable. This immediately implies that conjugacy between two given
members of $\Perm$ can in general not be decided.

\begin{theorem} \label{theorem:undecconj} There is a $g \in \Perm$ such
that $\textsc{Conj}(g)$ is recursively unsolvable.
\end{theorem}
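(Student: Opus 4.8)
The strategy is to exhibit a single permutation $g \in \Perm$ for which membership in the effective conjugacy class $[g]_\sim$ encodes the Halting Problem. By Theorem~\ref{theorem:cfdec}, $\textsc{CF}(f)$ is decidable iff $f$ is effectively conjugate to its semi-normal form; so I want a $g$ whose conjugacy class is ``reached'' by permutations whose cycle finiteness cannot be decided. More precisely, I will build a uniformly decidable family of permutations $f_x$ (with cycle deciders computable uniformly in $x$) such that $f_x \sim g$ iff $\varphi_x(x)$ does not halt (or the reverse); since $x \mapsto (f_x, \pi_x)$ is computable, a decision procedure for $\textsc{Conj}(g)$ would decide the Halting Problem, a contradiction.

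Concretely, I would take $g$ to be the recursive permutation whose cycle structure is a prescribed ``target'' cycle type --- the natural candidate is the permutation all of whose cycles are infinite, say $\Part g = \{P_x : x\in\N\}$ with each $P_x$ infinite, realized by a recursive normal permutation via Lemma~\ref{lemma:buildcycle} and Theorem~\ref{theorem:rhoperm} (with $\rho \equiv 0$). Then for each $x$ I build $f_x \in \Perm$ whose $x$-th block is infinite if $\varphi_x(x)$ halts and finite otherwise (or vice versa), while all other blocks are infinite. This is exactly the kind of construction already carried out just above Lemma~\ref{lemma:infcycle}: a coproduct of the uniformly decidable family $\Pi_x$ coming from $r'_x(n) = [\varphi_x(x)\text{ halts in }\le n\text{ steps}]$, which by Corollary~\ref{coro:pairrhochar} is permutable, yielding $g \in \Perm$ with deciders computable uniformly. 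I would re-use that $g$ directly and define $f_x = \lambda n[\pi_2\, g\code{x,n}]$ as the ``slice'' at coordinate $x$, together with its uniformly computable cycle decider. By construction, $f_x$ has a single finite cycle and countably many infinite ones exactly when $\varphi_x(x)$ halts, and is a single infinite cycle otherwise; comparing with the fixed target $g$ (a known recursive permutation with a chosen cycle type) via Theorem~\ref{theorem:permconj2}, $f_x \sim g \Leftrightarrow \Part f_x \cong \Part g$, which is a purely cycle-type condition equivalent to ``$\varphi_x(x)$ halts''.

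The main obstacle is arranging a \emph{fixed} target $g \in \Perm$ against which all the $f_x$'s are compared --- i.e., making sure the cycle type of $f_x$ is the same \emph{regardless of $x$} in exactly one of the two halting cases, so that $\cong$ with $\Part g$ really does capture halting. The cleanest route is probably to have $f_x$ always be a single cycle on its domain, of infinite length iff $\varphi_x(x)$ diverges and of some fixed finite length $\ell$ iff it converges; then take $g$ to be a fixed recursive single infinite cycle, so $f_x \sim g$ iff $\varphi_x(x)$ diverges. Since $\{f_x\}$ must be uniformly recursive with uniformly computable cycle deciders (to feed into $\textsc{Conj}(g)$ as a legitimate instance), one must double-check that the coproduct/slice construction delivers both the permutation and its decider effectively in $x$ --- but this is precisely what Corollary~\ref{coro:pairrhochar} and Theorem~\ref{theorem:rhochar} provide. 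Finally I conclude: a decider for $\textsc{Conj}(g)$ applied to the instance $(f_x, \pi_x, -)$ would compute $[\varphi_x(x)\text{ diverges}]$ uniformly in $x$, contradicting the undecidability of $K$; hence $\textsc{Conj}(g)$ is recursively unsolvable.
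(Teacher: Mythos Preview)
Your high-level strategy---reduce the Halting Problem to $\textsc{Conj}(g)$ by uniformly producing instances $(f_x,\pi_x)$ whose conjugacy to a fixed $g$ encodes halting---is sound, but the concrete cycle-type bookkeeping in your proposal is wrong in several places, and as written the argument does not go through.

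First, the slice $f_x=\lambda n.\pi_2\,g\code{x,n}$ of the permutation constructed before Lemma~\ref{lemma:infcycle} has exactly one cycle (all of $\N$) when $\varphi_x(x)$ diverges, and exactly \emph{two} cycles (one finite of length equal to the halting time, one infinite) when it halts---not ``a single finite cycle and countably many infinite ones'' as you write. Second, your ``cleanest route'' of making $f_x$ a single cycle of some \emph{fixed} finite length $\ell$ in the halting case is impossible: $f_x$ is a permutation of $\N$, so it cannot be a single finite cycle, and even allowing fixed points the length depends on the halting time, not on a fixed $\ell$. Third, your first candidate for $g$ (infinitely many infinite blocks) is never conjugate to any slice $f_x$, so that choice is unusable.

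The fix is easy once the cycle types are straight: take $g$ to be a single infinite cycle on $\N$ (e.g.\ $\delta\,\succf\,\delta^{-1}$, which is in $\Perm$ with trivial decider). Then $f_x\sim g$ iff $f_x$ has exactly one infinite cycle and nothing else, iff $\varphi_x(x)$ diverges; both $f_x$ and $g$ lie in $\G_1$, so cycle type equality coincides with effective conjugacy by Corollary~\ref{coro:cycletypeeq} and Theorem~\ref{theorem:permconj2}. Since $f_x$ and its decider $\pi_x$ are computable uniformly in $x$, a decider for $\textsc{Conj}(g)$ would decide $\overbar K$, a contradiction.

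For comparison, the paper does not reduce directly from $K$ but from the already-established undecidable problem $\textsc{CF*}$: given an \emph{arbitrary} $(f,\pi,x)$ with $f\in\Perm$, it builds a new permutation $f'$ by splitting the cycle $[x]_f$ according to the parity of the $f$-index relative to $x$ and isolating everything else as fixed points. This forces $f'$ to have exactly one infinite cycle plus infinitely many fixed points when $[x]_f$ is infinite, and only finite cycles otherwise; the target $g$ is accordingly chosen to have one infinite cycle and infinitely many fixed points. The paper's route is more elaborate because it handles arbitrary inputs $(f,\pi,x)$ rather than the specific Halting-encoding family, but both arguments ultimately hinge on the same mechanism: arranging a uniformly computable family in $\Perm$ whose cycle type matches a fixed target exactly on one side of an undecidable dichotomy.
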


\begin{proof} Define $g$ to be
\[
  g(x) := \begin{cases}
    x, & x \equiv 1 \Mod 2, \\
    0, & x = 2, \\
    x-4, & x \equiv 2 \Mod 4, x \not= 2, \\
    x+4, & x \equiv 0 \Mod 4,
   \end{cases}
\]
i.e.~$g = \cycle{\dots, 6, 2, 0, 4, 8, \dots}$.

The proof is by (truth-table) reduction of $\textsc{CF*}$. Let $f, \pi$ and
$x$ be given. We define another permutation $f'$ in a way that if $[x]_f$
is finite, $f'$ consists only of finite cycles, whereas if $[x]_f$ is
infinite, all cycles in $f'$ are either $1$-cycles or infinite and there
are infinitely many $1$-cycles and one infinite cycle. Therefore $f'$ and
$g$ have the same cycle type iff $[x]_f$ is infinite. Since $f'$ and $g$
have finitely many infinite cycles, cycle type equality is equivalent to
effective conjugacy of $f'$ and $g$, by Corollary~\ref{coro:cycletypeeq}
and Theorem~\ref{theorem:permconj2}. This shows
$|[x]_f| < \infty \Leftrightarrow f' \not\sim g$. It therefore suffices
to construct a permutation $f'$ and a decider $\pi'$ for its cycles,
uniformly in $f, \pi, x$, such that $f'$ has the same cycle type as $g$
iff $[x]_f$ is infinite.

Let $k(x') := \xi k[f^k(x) = x']$. Define the following equivalence relation
$\Pi'$: every $x' \not\in [x]_f$ is alone in a block and so is every
$x' \in [x]_f$ with $k(x') \equiv 1 \mod 2$. The remaining numbers
$x' \in [x]_f$ with $k(x') \equiv 0 \mod 2$ form a block together. This
equivalence is evidently decidable and a decider $\pi'$ can be computed
uniformly in $f$, $\pi$ and $x$. Because $f$ and $\pi$ are available, we can
compute a $\rho$ function for $\Part f$, as in Theorem~\ref{theorem:rhochar}.
This $\rho$ function can be used to define a $\rho$ function $\rho'$ for
$\Pi'$ in the following manner: let $y$ be given. If $y \not\in [x]_f$ or
$y \in [x]_f$ and $k(y) \equiv 1 \mod 2$, then $|P(y, \Pi')| = 1$ and there
is no greater element in the same block. Now assume $y \in [x]_f$ and $k(y)$
even. We have to decide whether there is a $y' > y$ with $y' \in [x]_f$ and
$k(y')$ even. First, using $\rho$, we can determine if there is a $y' > y$
which is also in $[x]_f$. If not, $\rho'(y) := \fr f$. Otherwise we can find
the smallest value $y' > y$ with $y' \in [x]_f$. If $k(y')$ is even, we are
done and report $\rho'(y) := \fr t$. Otherwise we repeat the procedure with
$y'$.

This algorithm lists all elements of $[x]_f$ which are greater than $y$
\emph{in order} until one with even $k$ index is found or the cycle is
exhausted. Thus if the algorithm terminates, we have either witnessed the
existence of a greater element than $y$ in $P(y, \Pi')$ or we have verified
that all elements in $[x]_f \supseteq P(y, \Pi')$ which are greater than
$y$ are not in $P(y, \Pi')$. So if the algorithm terminates, it yields a
correct answer. It remains to prove that it always terminates. If $[x]_f$ is
finite, the algorithm halts at the latest after the cycle is exhausted. If
$[x]_f$ is infinite, then there are infinitely many numbers with even $k$
index, and thus arbitrarily large ones; the algorithm will eventually find
one and terminate. With $\rho'$, Theorem~\ref{theorem:rhochar} gives, still
uniformly in $f$, $\pi$ and $x$, a permutation $f' \in \Perm \Pi'$. If
$[x]_f$ is finite, all cycles in $f'$ are finite. If $[x]_f$ is infinite,
$f'$ consists of infinitely many $1$-cycles and one infinite cycle. This
completes the proof.
\end{proof}

The second task treated in this subsection is enumerability. It may be useful
for various constructions to compute an exhaustive list of all the members
of $\Perm$. This is shown impossible here, i.e.~there is no partial recursive
function $\rho$ such that:
\begin{compactenum}[\hskip 2em(1)]
\item $\forall x \in \dom \rho: \varphi_{\rho(x)} \in \Perm$, and
\item $\forall f \in \Perm \exists x \in \dom \rho: f = \varphi_{\rho(x)}$.
\end{compactenum}

\begin{theorem} \label{theorem:perminenum} $\Perm$ is not recursively
enumerable.
\end{theorem}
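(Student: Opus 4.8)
The plan is to prove the statement by diagonalisation, after first reducing to the case of a \emph{total} enumerator. Suppose for contradiction that a partial recursive $\rho$ satisfies properties (1) and (2). Since $F \subseteq \Perm$ and $F$ is infinite as a set of functions (e.g.\ the transpositions $\cycle{0,n}$, $n \ge 1$, are pairwise distinct), $\Perm$ is infinite, so by (2) the set $\dom\rho$ is infinite; being also recursively enumerable, it is the range of some total recursive $\iota$. Then $\rho' := \rho\iota$ is total recursive with $\dom\rho' = \N$, and $\{\varphi_{\rho'(n)} : n \in \N\} = \{\varphi_{\rho(x)} : x \in \dom\rho\} = \Perm$ by (1) and (2). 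So I would henceforth assume $\rho$ itself is total, which guarantees that each $\varphi_{\rho(n)} \in \Perm \subseteq \G$ is an everywhere-defined recursive permutation.

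Next I would split $\N$ into uniformly recursive infinite blocks, say $C_n := \{\code{n,y} : y \in \N\}$, and write $c_n(0) < c_n(1) < c_n(2) < \dots$ for the elements of $C_n$ in increasing order, uniformly computably in $n$. On each block I keep two explicit recursive permutations of $C_n$: a ``default'' one $\tau_n := c_n\delta\succf\delta^{-1}c_n^{-1}$, which (by the $\delta$ trick of Lemma~\ref{lemma:buildcycle}) turns $C_n$ into a single normal infinite cycle with $\tau_n(c_n(0)) = c_n(2)$, and a ``modified'' one $\tau_n' := \cycle{c_n(0),c_n(2)}\tau_n$, which fixes $c_n(0)$ and makes $C_n \setminus \{c_n(0)\}$ one infinite cycle, so that $\tau_n'(c_n(0)) = c_n(0) \ne c_n(2)$. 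Define $f^*$ on the block $C_n$ to equal $\tau_n'$ if $\varphi_{\rho(n)}(c_n(0)) = c_n(2)$ and $\tau_n$ otherwise; the test is decidable because $\rho$ and $\varphi_{\rho(n)}$ are total. Then $f^* \in \G$, since each $\tau_n,\tau_n'$ is a recursive permutation of its block and the blocks partition $\N$. Moreover $f^*$ has decidable cycles: the blocks $C_n$ are uniformly recursive, and inside a block the cycle partition of $f^*$ is one of the two explicit possibilities selected by the decidable test above, so
\[
  \pi(x,x') := \big[\, \text{$x$ and $x'$ lie in the same $C_n$ and in the same cycle of $f^*|_{C_n}$} \,\big]
\]
is a recursive decider for $\equiv_{f^*}$; hence $f^* \in \Perm$. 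But by construction $f^*(c_n(0)) \ne \varphi_{\rho(n)}(c_n(0))$ for every $n$, so $f^*$ is a member of $\Perm$ not appearing in the enumeration, contradicting (2).

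The delicate point — and the reason the construction is arranged this way — is keeping $f^*$ \emph{simultaneously} recursive and inside $\Perm$: the diagonalisation must commit the value $f^*(c_n(0))$ on the basis of the behaviour of $\varphi_{\rho(n)}$, which is why the reduction to a total $\rho$ is essential (so that $\varphi_{\rho(n)}(c_n(0))$ is outright computable, not merely semi-computable), and the two candidate block permutations must both have uniformly decidable cycle structure, lest membership of $f^*$ in $\Perm$ be lost. This is in effect van Leeuwen's argument~\cite{leeuwen15}, and the theorem follows as stated.
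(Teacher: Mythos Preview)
Your diagonalisation is correct. The reduction to a total enumerator is sound, the two block permutations $\tau_n$ and $\tau_n'$ are uniformly recursive with uniformly decidable cycle structure, the test $[\varphi_{\rho(n)}(c_n(0)) = c_n(2)]$ is computable because each $\varphi_{\rho(n)}$ is total, and the resulting $f^*$ lies in $\Perm$ but disagrees with every $\varphi_{\rho(n)}$ at $c_n(0)$.

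Your route differs from the paper's. The paper does not carry out a diagonalisation; it invokes van Leeuwen's theorem that no recursively enumerable set of partial recursive functions with infinite domains can contain all recursive involutions, and then observes via Lemma~\ref{lemma:oneinfdec} that $\Perm$ contains all recursive involutions. That argument is two lines but imports an external result; yours is self-contained and is precisely the ``obvious but somewhat technical diagonalisation'' the paper alludes to but does not write out. One small quibble: your closing sentence that ``this is in effect van Leeuwen's argument'' is slightly misleading, since van Leeuwen's theorem is specifically about involutions and your diagonal $f^*$ is not an involution; the kinship is at the level of strategy rather than content.
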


There are multiple accessible proofs of this theorem. The first is an
obvious but somewhat technical diagonalisation. The second proof is based
on Corollary~\ref{coro:permnormal} and Kent's result \cite[Thm.~2.1]{kent62}
about the composition series of $\G$. From these two follows that the
subgroup of $\G$ generated by $\Perm$ is already all of $\G$. \emph{If}
$\Perm$ was enumerable, then so would be its group closure, but this
\emph{contradicts} the inenumerability of $\G$, \cite[Ex.~4-6]{rogers87}.
We give another short proof based on a result by van Leeuwen:

\begin{proof} By \cite[Thm.~4]{leeuwen15}, no recursively enumerable set
of partial recursive functions with infinite domains can contain all
involutions. All members of $\Perm$ are permutations and have infinite
domains, but by Lemma~\ref{lemma:oneinfdec}, $\Perm$ contains all recursive
permutations with only finite cycles, in particular all involutions. It
follows that $\Perm$ is not enumerable.
\end{proof}

\paragraphtitle{Difficulty of cycle decidability} \parlabel{par:diffcycledec}
\cite{kent62} provides a tool to obtain permutations with particularly
difficult cycle structure.
Let $W_x := \dom \varphi_x$ denote the standard numbering of recursively
enumerable sets. Recall from \cite[\textsection~7.3]{rogers87} that a set
$P$ is \emph{productive} if there is a partial recursive $\psi$, such that
whenever $W_x \subseteq P$ it follows that $\psi(x)$ is convergent and
$\psi(x) \in P \setminus W_x$. The function $\psi$ is called a
\emph{productive function} for $P$. A set $C$ is \emph{creative} if it is a
recursively enumerable complement of a productive set. One example of a
creative set is the Halting Problem $K := \{x : x \in W_x\}$ whose
complement has the identity as a productive function.

\begin{theorem*}[{\cite[Thm.~1.3]{kent62}}] Let $C$ be a creative set. There
is a recursive permutation $k$ composed of infinitely many infinite cycles,
one of which is $C$.
\end{theorem*}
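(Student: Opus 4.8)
The plan is to build the permutation $k$ explicitly by threading the creative set $C$ into one infinite cycle while filling the complement $\overbar{C}$ with infinitely many further infinite cycles, all arranged so that the whole thing is a recursive permutation. Since $C$ is creative, it is recursively enumerable, so fix a recursive enumeration $C = \{c_0, c_1, c_2, \dots\}$ without repetitions (possible because a creative set is infinite). The natural first attempt is to let the cycle on $C$ be $\cycle{c_0, c_1, c_2, \dots}$, but as the paper already notes in the discussion preceding the normal form, an enumeration in this naive one-directional layout does not yield a permutation: $c_0$ would have no preimage. The fix is exactly the $\delta$-technique from Lemma~\ref{lemma:buildcycle}: set $k|_C := c\,\delta\succf\delta^{-1}c^{-1}$, where $c$ is the enumeration function $n \mapsto c_n$ viewed as a bijection $\N \to C$, so that $k$ restricted to $C$ is the recursive image of the successor permutation on $\Z$ and is thus a single infinite cycle with support $C$. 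The one subtlety versus Lemma~\ref{lemma:buildcycle} is that $C$ is only recursively \emph{enumerable}, not recursive, so $c^{-1}$ is only partial recursive; but that is harmless, because $k$ is defined by a global case split ``$x \in C$ or $x \in \overbar{C}$'' which we will handle uniformly below, and on the branch $x \in C$ the value $c^{-1}(x)$ is computed by searching the enumeration until $x$ appears, which terminates precisely when $x \in C$.

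**Next I would** handle the complement. Here I would invoke the productive function for $C$, or more directly the machinery of Corollary~\ref{coro:pairrhochar} / Theorem~\ref{theorem:rhochar}, to manufacture a recursive permutation of $\overbar{C}$ consisting of infinitely many infinite cycles. Concretely: since $C$ is recursively enumerable, $\overbar{C}$ is the complement of an r.e.\ set; using a productive function $\psi$ for $C$ one obtains, in the standard way, an infinite recursive subset structure on $\overbar{C}$ — more carefully, one builds a uniformly recursive sequence of pairwise disjoint infinite recursive subsets of $\overbar C$ whose union is $\overbar C$, by the classical ``productive sets have infinite r.e.\ subsets of their complement'' argument iterated with dovetailing, and indeed Kent's own construction proceeds this way. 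On each such block apply Lemma~\ref{lemma:buildcycle} (or its normal-form refinement, Corollary~\ref{coro:buildcyclenormal}) to install one infinite cycle. Stitching the pieces: define $k(x)$ to first semidecide whether $x \in C$ by enumerating $C$ and, \emph{in parallel}, running the partition routine that locates $x$'s block in $\overbar C$; exactly one of these halts, and in either case we then output the image of $x$ inside the relevant infinite cycle. This is partial recursive by the Church--Turing Thesis and total because $C \cup \overbar C = \N$, so $k$ is a recursive permutation whose cycles are $C$ together with the chosen infinitely many infinite pieces of $\overbar C$.

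**The main obstacle** I expect is the second step: producing the uniformly recursive decomposition of $\overbar{C}$ into infinitely many infinite recursive blocks. The complement of a creative set is productive, not itself nicely structured, so one cannot just ``list'' a partition of it the way one can for a recursive set. The productive function $\psi$ lets us, given an r.e.\ subset $W_x \subseteq C$ already exhausted, effectively find a new element of $C \setminus W_x$; the dual move, extracting an infinite \emph{recursive} subset of $\overbar C$, requires the observation that $\overbar C$, being the complement of a creative set, contains an infinite recursive set — indeed a standard result (every productive set has an infinite r.e.\ subset, hence a productive set's complement, being the creative set, is co-infinite with its complement containing an infinite r.e.\ and therefore, after thinning, infinite recursive subset). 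Getting this \emph{uniformly} so that the whole permutation is recursive — i.e.\ a single algorithm that, on input $x$, decides which block of the $\overbar C$-part $x$ lies in and then computes $k(x)$ — is the delicate bookkeeping, and it is precisely the content one borrows from Kent's Theorem~1.3 construction. Once the blocks are in hand, the cycle-building is a direct application of the lemmas already proved, and verifying that $k$ is a bijection with $C$ as one of its cycles is routine.
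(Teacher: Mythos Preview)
Your step 2 asks for something that cannot exist. A uniformly recursive partition of $\overbar{C}$ into blocks $B_0, B_1, \dots$ (say via a recursive $d$ with $d(i,x) = [x \in B_i]$, or merely a recursive $q$ with $W_{q(i)} = B_i$) would make $x \in \overbar{C} \Leftrightarrow \exists i\,[x \in B_i]$ a $\Sigma_1$ predicate, hence $\overbar{C}$ recursively enumerable. But $C$ is creative, so $\overbar{C}$ is productive and therefore not r.e. The standard fact that a productive set contains an infinite recursive subset does let you extract disjoint infinite recursive pieces $R_0, R_1, \dots \subseteq \overbar{C}$ indefinitely, yet by the same argument their union can never exhaust $\overbar{C}$. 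Your stitching step then collapses as well: a dovetail in which ``exactly one branch halts'' would semi-decide both $C$ and $\overbar{C}$, forcing $C$ to be recursive. (Two side remarks: the productive function is attached to $\overbar{C}$, not to $C$; and Kent's construction does \emph{not} proceed by partitioning $\overbar{C}$.)

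The paper does not prove this theorem itself but cites Kent, whose route sidesteps the obstruction by passing through the \emph{cylinder}. His Lemma~1.4 builds a recursive permutation of $\N \times \N$ in which $C \times \N$ is a single infinite cycle and each column $\{x\} \times \N$ with $x \in \overbar{C}$ is its own infinite cycle. The second coordinate serves as a clock: to compute the image of $\code{x,n}$ one only needs to know whether $x$ has appeared among the first $n$ or so elements of the enumeration of $C$ --- a decidable question --- and routes $\code{x,n}$ accordingly, so the global question ``$x \in C$?'' is never posed. One then transports the result from $C \times \N$ back to $C$ using the fact that a creative set is recursively isomorphic to its own cylinder (creative $\Rightarrow$ $1$-complete $\Rightarrow$ cylinder, then Myhill's isomorphism theorem); conjugating the Lemma~1.4 permutation by that isomorphism carries the cycle $C \times \N$ to $C$.
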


We remark that the proof of Theorem~1.3 in \cite{kent62} is largely based on
his Lemma~1.4, which does not hold as stated there. It states that if $A$
is a non-empty recursively enumerable set, then there is a recursive
permutation with infinitely many infinite cycles, one of which is the
cylinder $A \times \N \subseteq \N$ \cite[\textsection~7.6]{rogers87}.
However, for $A = \N$, the cylinder $\N \times \N = \N$ and if one of the
cycles of the permutation is $\N$, it cannot have infinitely many cycles.
The proof given in \cite{kent62} shows the assertion under the additional
assumption that $A$ has infinite complement. This result is sufficient to
infer his Theorem~1.3, as creative sets necessarily have infinite
complements.

With this theorem we obtain a recursive permutation $k$ with a creative
cycle. The complement of this cycle is productive. By definition, a
productive set is not recursively enumerable. On the other hand, if $f \in
\Perm$, then every cycle of $f$ is a recursive set, so the complement of
every cycle is recursive, too. It follows

\begin{lemma} \label{lemma:gminusperm} There is a permutation $k \in
\G \setminus \Perm$ with infinitely many infinite cycles, one of which is
the Halting Problem $K$. \qed
\end{lemma}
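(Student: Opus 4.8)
The plan is to instantiate the quoted Theorem~1.3 of \cite{kent62} at the creative set $C = K$ and then argue that the permutation it produces cannot lie in $\Perm$.

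First I would record that the Halting Problem $K = \{x : x \in W_x\}$ is creative: it is recursively enumerable, and its complement $\overbar K$ is productive, with the identity as a productive function, exactly as recalled immediately above the statement. Therefore Theorem~1.3 of \cite{kent62} applies and yields a recursive permutation $k$ whose disjoint cycle decomposition consists of infinitely many infinite cycles, one of which is precisely $K$. Here one uses the theorem in the corrected form discussed just after its quotation, but this costs nothing since $\overbar K$ is infinite, so the defect in Kent's Lemma~1.4 is irrelevant. At this point $k \in \G$ and the structural claims of the lemma (infinitely many infinite cycles, one equal to $K$) are already established.

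It then remains to see that $k \notin \Perm$. Suppose for contradiction that $k \in \Perm$, so that $\Part k$ is a decidable equivalence. By the analysis of \parref{par:decequiv} --- concretely Lemma~\ref{lemma:sigmachar} (or Proposition~\ref{prop:pchar}) --- every block of a decidable partition is a recursive set; in particular the cycle $K$ of $k$ would be recursive. But a creative set is never recursive: its complement is productive, hence not even recursively enumerable, so $K$ cannot be recursive. This contradiction gives $k \in \G \setminus \Perm$ and finishes the proof.

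I do not expect a genuine obstacle here; the only points requiring care are invoking Theorem~1.3 in its valid form and making the implication ``decidable cycles $\Rightarrow$ every cycle recursive $\Rightarrow$ $K$ recursive'' explicit, which is immediately incompatible with the creativity of $K$.
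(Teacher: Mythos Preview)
Your proposal is correct and follows essentially the same route as the paper: apply Kent's Theorem~1.3 with $C = K$ to obtain $k$, then observe that membership in $\Perm$ would force the cycle $K$ to be recursive, contradicting that its complement $\overbar K$ is productive and hence not recursively enumerable. The paper phrases the contradiction via the complement of the cycle rather than the cycle itself, but this is the same argument.
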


Alternatively, \cite[Thm.~3.1]{higman90} shows that there is a recursive
permutation with infinitely many infinite cycles and no finite cycles, all
of whose transversals are immune \cite[\textsection~8.2]{rogers87}. Since
an immune set is not recursively enumerable, it follows by
Theorem~\ref{theorem:cycledecchar} that this permutation has undecidable
cycles.

We note that Kent's and Higman's constructions produce a permutation with
infinitely many infinite cycles. Indeed, a permutation with finitely many
infinite cycles is necessarily in $\Perm$ by
Proposition~\ref{prop:fininfdec}.

The fact that there are permutations with undecidable cycles raises the
question of how difficult cycle decidability problems for recursive
permutations can become. The following proposition determines the maximal
one-one degree of cycle decidability problems in $\G$. For the definition of
reducibilities and degrees, the reader is referred to
\cite[\textsection\textsection~6ff.]{rogers87}.

\begin{proposition} \label{prop:cycledegree} For every $f \in \G$, the cycle
decidability problem of $f$ is one-one reducible to the Halting Problem, and
there exists an $f' \in \G$ such that the Halting Problem is one-one
reducible to the cycle decidability of $f'$.
\end{proposition}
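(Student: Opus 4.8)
The plan is to establish the two one-one reductions separately; both are short given what is already available. Throughout, view the cycle decidability problem of a permutation $f \in \G$ as the set $\equiv_f = \{\code{x, x'} : x \equiv_f x'\}$ of codes of $f$-related pairs. For the upper bound, fix $f \in \G$ and note that $\equiv_f$ is recursively enumerable: on input $\code{x, x'}$ one runs the alternating search $\xi k[f^k(x) = x']$, which terminates precisely when $x$ and $x'$ share a cycle (this uses that both $f$ and $f^{-1}$ are recursive). Since the Halting Problem $K$ is creative, it is $1$-complete among the recursively enumerable sets --- a classical fact (see \cite{rogers87}) --- so $\equiv_f \le_1 K$. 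An index for the reduction can in fact be computed from an index for $f$, though the statement only asks for existence.

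For the lower bound, invoke Lemma~\ref{lemma:gminusperm}, which furnishes a recursive permutation $k$ composed of infinitely many infinite cycles, exactly one of which is the set $K$; put $f' := k \in \G$. Fix once and for all a number $a \in K$, for instance a G\"odel number of a total function, so that $\varphi_a(a)$ converges. Since the cycle of $f'$ through $a$ coincides with $K$, for every $x$ we have
\[
  x \in K \;\Leftrightarrow\; x \equiv_{f'} a \;\Leftrightarrow\; \code{x, a} \in \; \equiv_{f'}.
\]
The map $x \mapsto \code{x, a}$ is total recursive and injective, so it witnesses $K \le_1 \equiv_{f'}$, i.e.~$K$ is one-one reducible to the cycle decidability problem of $f'$.

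This argument carries no genuine obstacle of its own. The only points to watch are that ``one-one'' reducibility demands injective reducing functions, which is why everything is routed through the injective pairing function, and that the upper bound relies on the sharper fact that $K$ is $1$-complete rather than merely $m$-complete; both are routine. All the real content is packed into Lemma~\ref{lemma:gminusperm}, that is, into Kent's construction of a recursive permutation with a prescribed creative cycle, which has already been dealt with.
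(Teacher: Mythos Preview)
Your proof is correct and follows essentially the same line as the paper's. The lower bound is identical: invoke Lemma~\ref{lemma:gminusperm}, pick a fixed $a \in K$, and use the injective map $x \mapsto \code{x,a}$. For the upper bound both arguments rest on the observation that $\equiv_f$ is recursively enumerable via the alternating search $\xi k[f^k(x) = x']$; you then quote the $1$-completeness of $K$ among r.e.\ sets as a black box, whereas the paper spells the reduction out explicitly (map $\code{x,x'}$ to an index for $\lambda w[\xi k[f^k(x)=x']]$) and remarks that the map can be taken strictly increasing in $\code{x,x'}$ to ensure injectivity. That is a difference in presentation, not in substance.
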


\begin{proof} Let $f \in \G$ be arbitrary. With an oracle for the Halting
Problem $K$ we can check uniformly in $x, x'$ whether the function
$\lambda w[\xi k[f^k(x) = x']]$ halts on its own encoding (or any other
number because the function ignores its argument). This is the case iff
$x \equiv_f x'$. The mapping of $\code{x,x'}$ to a program for
$\lambda w[\xi k[f^k(x) = x']]$ can be chosen to be strictly increasing
in the numerical value of $\code{x,x'}$ which makes it a one-one reduction.

For the opposite direction, Lemma~\ref{lemma:gminusperm} shows that there
is a recursive permutation $k \in \G$ of which one cycle is the Halting
Problem. Fix a program $x_0$ such that $\varphi_{x_0}(x_0)$ halts. Then
the Halting Problem can be solved by $x \in K \Leftrightarrow
x \equiv_{k} x_0$ which is a one-one reduction.
\end{proof}

In previous sections we have used conjugacy and normal forms to characterise
the solvability of cycle decidability and cycle finiteness problems. An
interesting fact is that these two classes of problems in $\G$ are
inter-reducible. To prove this, a technical lemma is needed:

\begin{lemma} \label{lemma:oddlength} For every $g \in \G$ there is a
$g' \in \G$ and an embedding $j: \N \to \N$ such that $|[x]_g| < \infty
\Leftrightarrow |[j(x)]_{g'}| < \infty$ and all cycles of $g'$ which
intersect $\rng j$ are infinite or of odd length.
\end{lemma}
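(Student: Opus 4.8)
The plan is to place $g'$ on two disjoint copies of $\N$, writing its domain as $\{\code{x,i}:x\in\N,\ i\in\{0,1\}\}$ identified with $\N$ via the pairing function, and to take $j(x):=\code{x,0}$, which is plainly a recursive embedding. The copy $i=0$ carries a modified version of $g$ and the copy $i=1$ serves as padding. The target structure is: for a finite cycle $C$ of $g$ the $g'$-cycle through $j(C)$ should have length $2|C|-1$ (hence odd), and for an infinite cycle $C$ the images $j(x)$, $x\in C$, should all lie in infinite $g'$-cycles; then $|[x]_g|<\infty\Leftrightarrow|[j(x)]_{g'}|<\infty$ and every $g'$-cycle meeting $\rng j$ is infinite or of odd length, which is the assertion. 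Concretely, on a finite cycle $C$ with least element $m$, $g'$ runs $g$ once around the $0$-copy and then once more around the $1$-copy, but visits $m$ only in the $0$-copy: the wrap-around is replaced by the step $\code{g^{-1}(m),0}\to\code{g(m),1}$, the $1$-copy is traversed by $\code{g(m),1}\to\code{g^{2}(m),1}\to\cdots\to\code{g^{-1}(m),1}\to\code{m,0}$, and $\code{m,1}$ is left as a fixed point; this produces a single cycle of length $2|C|-1$ containing $C\times\{0\}=j(C)$. On an infinite cycle the wrap-around never arises, the $j$-images are carried into a couple of infinite $g'$-cycles, and nothing is lost. The bookkeeping is the same sort as in Figure~\ref{fig:normaldiag}.

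First I would write $g'$ down precisely: on input $\code{x,i}$ it compares $x$ with its $g$- and $g^{-1}$-neighbours and, using the per-cycle base point $m$, either advances $g$ inside copy $i$, switches copies at a base point, or performs the reroute at $m$. Then I would check that $g'$ is a bijection of $\N\times\{0,1\}$ whose inverse runs by the same recipe, so that $g'\in\G$ once it is total; and verify the two structural claims (a finite $g$-cycle $C$ gives one $g'$-cycle of odd length $2|C|-1$ containing $j(C)$; an infinite $g$-cycle gives infinite $g'$-cycles containing $j(C)$), from which the lemma is immediate.

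The main obstacle is totality. The construction is anchored at the least element $m$ of each $g$-cycle, and recognising that a given $y$ is the minimum of $[y]_g$ is \emph{not} uniformly computable: the obvious procedure (search for a smaller member of the cycle while also trying to close it) halts for every finite cycle and for every non-minimal $y$, but diverges exactly when $y$ is the least element of an infinite cycle, so the reroute cannot be packaged as one bounded test. I would get around this by building $g'$ in stages, processing inputs $\code{x,i}$ in increasing order of $x$, maintaining a growing finite partial bijection, letting the role of ``$\min C$'' be played by the least member of $C$ seen so far, and revising the approximation whenever a still smaller member of a cycle surfaces; the step from a defeated base point back to faithful behaviour, and the new reroute at the smaller one, are the only revisions made. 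For a finite cycle this stabilises once the cycle has been fully traversed, fixing the odd length $2|C|-1$; for an infinite cycle the base point descends to the true minimum after finitely many revisions, every input is changed only finitely often, and the $g'$-cycles through $j(C)$ keep growing and are infinite. The genuine work is showing each input is injured finitely often, so that $g'(\code{x,i}):=\lim_s g'_s(\code{x,i})$ is a total recursive permutation with the stated cycle structure; a more economical packaging would present the intended cycle relation of $g'$ as a coproduct and invoke Corollary~\ref{coro:pairrhochar}, but since $\Part g$ need not itself be decidable this rephrasing still requires the same stage-wise control, so I expect the direct construction to be the cleaner route.
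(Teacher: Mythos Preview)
Your construction has a genuine gap. You correctly identify that anchoring at the minimum $m$ of each cycle is problematic, and you propose to approximate $m$ stage by stage. You then conclude that since ``each input is injured finitely often'', the limit $g'(\code{x,i}):=\lim_s g'_s(\code{x,i})$ is ``a total recursive permutation''. But finitely many injuries only guarantees that the limit \emph{exists}; it does not make the limit \emph{recursive}. For that you would need a computable modulus of convergence --- for each input, a stage after which no more changes occur --- and none is available here. Indeed, from your intended limit $g'$ one reads off, for every $y$, whether $y = \min[y]_g$: this holds iff the second component of $g'(\code{g^{-1}(y),0})$ is $1$, since $\code{g^{-1}(m),0}$ is the unique point on the $0$-copy where the reroute into the $1$-copy occurs. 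Deciding ``$y = \min[y]_g$'' lets one compute $x \mapsto \min[x]_g$ (enumerate $[x]_g$ by iterating $g^{\pm k}(x)$ and test each element $\le x$), which by Theorem~\ref{theorem:cycledecchar}(b) is equivalent to cycle decidability of $g$. Since the lemma is stated for arbitrary $g \in \G$, including those outside $\Perm$ (Lemma~\ref{lemma:gminusperm}), your limit cannot be recursive in general. (A smaller defect: on a $1$-cycle $\{m\}$ your reroute, traversal, and fixed-point rules all apply to $\code{m,1}$ and disagree, so $g'$ is not even a well-defined permutation there.)

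The paper sidesteps the anchor problem entirely. Rather than one copy of each cycle anchored at its minimum, it creates a separate copy of $[x]_g$ for \emph{every} $x$, using $x$ itself as the anchor of its own copy. Over triples $\code{x,y,i}$ with $j(x) := \code{x,x,0}$, each $g$-step in the second component is stretched by a factor of $2$, except at the anchor --- detected by the trivially decidable condition $y = x$ --- which is stretched by $3$:
\[
  \code{x,x,0} \mapsto \code{x,x,1} \mapsto \code{x,x,2} \mapsto \code{x,g(x),0}, \qquad
  \code{x,y,0} \mapsto \code{x,y,1} \mapsto \code{x,g(y),0} \ \ (y \ne x),
\]
with everything else fixed. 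This is directly a recursive permutation, no limits or minima required, and the cycle through $j(x)$ has length $2|[x]_g|+1$.
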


\begin{proof} The idea is to create for each $x$ a copy of its cycle, double
its length and then add one further element to it. This preserves cycle
finiteness and makes every finite cycle odd. Define $j(x) := \code{x,x,0}$
and $g'$ by
\begin{gather*}
  \code{x,x,0} \mapsto \code{x,x,1} \mapsto \code{x,x,2} \mapsto \code{x,g(x),0} \\
  \code{x,y,0} \mapsto \code{x,y,1} \mapsto \code{x,g(y),0}, \; y \not= x
\end{gather*}
and fix every triple which does not match the decidable patterns above.
Clearly $g'$ is a permutation and every cycle which contains some
$\code{x,x,0}$ is either infinite or of odd length. Since the cycle
structure of $g$ is transferred into the second component and merely
stretched by the third component in the definition of $g'$, we have
$|[x]_g| < \infty \Leftrightarrow |[j(x)]_{g'}| < \infty$ for all $x$.
$|[j(x)]_{g'}|$ is either infinite or odd.
\end{proof}

\begin{theorem} \label{theorem:interred} In $\G$, the classes of cycle
decidability and cycle finiteness problems are one-one inter-reducible,
in the following sense:
\begin{compactenum}[\hskip .99em(i)]
\item For every $f \in \G$ there is a $g \in \G$ and an embedding $j$ such
  that $x \equiv_f y \Leftrightarrow |[j\code{x, y}]_g| < \infty$.
\item For every $g \in \G$ there is an $f \in \G$ and two embeddings $j, j'$
  such that $|[x]_g| < \infty \Leftrightarrow j(x) \equiv_f j'(x)$.
\end{compactenum}
\end{theorem}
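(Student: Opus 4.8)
The plan is to prove the two reductions independently, each by combining a known earlier construction with a simple bookkeeping trick that carries the decidability information into a cycle-finiteness question and vice versa.

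For part (i), the idea is to encode the \emph{reachability predicate} $x \equiv_f y$ as the finiteness of a cycle. Starting from $f \in \G$, I would build $g$ on a domain of triples: for each pair $\code{x,y}$, reserve a ``gadget'' whose cycle threads along the $f$-orbit of $x$ (so its length equals the cardinality of the sub-orbit from $x$ until $y$ is reached, if ever) and which is capped off into a finite cycle \emph{exactly when} $f^k(x) = y$ for some $k$. Concretely: on triples $\code{x,y,k}$ with $k \ge 0$, let $g$ send $\code{x,y,k} \mapsto \code{x,y,k+1}$ as long as $f^k(x) \ne y$; and when $f^k(x) = y$ is first detected, close the cycle by mapping $\code{x,y,k} \mapsto \code{x,y,0}$. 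If $x \equiv_f y$ this produces a finite cycle (length $=$ the least such $k$, plus one), and if $x \not\equiv_f y$ the chain $\code{x,y,0}, \code{x,y,1}, \dots$ runs forever, giving an infinite cycle. All triples not of this shape are fixed points. The map $j\code{x,y} := \code{x,y,0}$ is an embedding, $g$ is recursive because all the pattern tests are decidable via a program for $f$, and by construction $x \equiv_f y \Leftrightarrow |[j\code{x,y}]_g| < \infty$. One must check that these gadgets for distinct $\code{x,y}$ are genuinely disjoint and that $g$ is a bijection; this is routine since the third coordinate together with $\code{x,y}$ pins down where one is in the gadget.

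For part (ii), I would invoke Lemma~\ref{lemma:infcycle} and the machinery of \parref{par:permutability}. The cleaner route, however, is to mimic the construction in the proof of Theorem~\ref{theorem:undecconj}, which already reduces a cycle-finiteness question to a \emph{conjugacy} (hence, via the normal-form characterisations, to a cycle-decidability) statement: there we built, uniformly in $g$ and $x$, a permutation $f'$ together with a decider $\pi'$ for $\Part f'$ such that $|[x]_g| < \infty$ iff $f'$ has a certain cycle type. I would instead aim directly at (ii) by using Lemma~\ref{lemma:oddlength} first to replace $g$ by $g'$ whose cycles meeting $\rng j$ are infinite or of \emph{odd} length, and then encoding, for each $x$, the parity/finiteness data of $[j(x)]_{g'}$ into whether two designated points $j(x), j'(x)$ are $f$-equivalent. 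The point of passing to odd length is exactly the one exploited in Theorem~\ref{theorem:undecconj}: an infinite cycle and a finite even cycle behave differently under the ``every second element'' construction, so one can arrange a decidable equivalence $\Pi$ whose blocks distinguish the two cases, obtain via Theorem~\ref{theorem:rhochar} or Corollary~\ref{coro:pairrhochar} a recursive $f \in \Perm \Pi$, and read off $|[x]_g| < \infty$ from $j(x) \equiv_f j'(x)$. The embeddings $j, j'$ are the obvious coordinate injections.

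The main obstacle I expect is part (ii): unlike (i), where the gadget construction is completely self-contained, here one needs $f$ to be \emph{recursive} while its cycle relation encodes a $\Sigma^0_1$-complete amount of information, so $f$ cannot itself ``know'' the answers — the information must be laid out in a way that is locally computable but globally undecidable. This is precisely the situation handled by the $\rho$-function criterion (Corollary~\ref{coro:pairrhochar}) together with the odd-length trick from Lemma~\ref{lemma:oddlength}, so the real work is checking that the $\rho$ predicate ``is there a larger element in the same block'' is decidable for the equivalence one writes down — the same style of argument as in the proof of Theorem~\ref{theorem:undecconj}, where termination of the search is guaranteed by the existence of arbitrarily large even-index elements in an infinite cycle. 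Once that decidability is in hand, both reductions are seen to be one-one (the underlying coding maps can be taken strictly increasing, exactly as in Proposition~\ref{prop:cycledegree}), which is all that is claimed.
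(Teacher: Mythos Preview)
Your part~(i) has a genuine gap that is not ``routine'': as written, $g$ is \emph{not a permutation}. When $x \not\equiv_f y$, the chain $\code{x,y,0} \mapsto \code{x,y,1} \mapsto \cdots$ is an infinite one-sided ray, so $\code{x,y,0}$ has no preimage. There is also a subtler correctness issue: you only test $f^k(x) = y$ for $k \ge 0$, but if $[x]_f$ is infinite and $y = f^{-m}(x)$ for some $m > 0$, then $x \equiv_f y$ while your gadget never closes. The paper avoids both problems by not writing down $g$ by hand: it defines a uniformly decidable family $\Pi_{x,y}$ on $\N$ (with $i \Pi_{x,y} j$ iff $i = j$ or no $k \in \Z$ with $|k| \le \max\{i,j\}$ satisfies $f^k(x) = y$), checks that the $\rho$-predicate of Corollary~\ref{coro:pairrhochar} is decidable for the coproduct, and invokes that corollary to \emph{produce} a recursive permutation $g$ in $\Perm \Pi$. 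This simultaneously handles bijectivity (the corollary outputs a genuine permutation via the normal-form machinery) and the two-sided search (the quantifier ranges over $k \in \Z$). Your idea is salvageable if you use the $\delta$-encoding of Lemma~\ref{lemma:buildcycle} to make each infinite gadget a two-way infinite cycle and replace the test by an alternating search, but then you are essentially reinventing the corollary.

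For part~(ii) you have the right ingredients---Lemma~\ref{lemma:oddlength} followed by an ``every second element'' construction---but you overcomplicate the finish and slightly misstate the mechanism (it is finite \emph{odd} cycles, not even, that survive squaring intact). The paper's punchline is one line: set $f := g'^2$ and $j' := g'j$. If $[j(x)]_{g'}$ is infinite, then $j(x)$ and $g'j(x)$ lie in different $g'^2$-orbits; if it has odd length $n$, then $2$ is coprime to $n$, so $g'^2$ still generates the whole cycle and $j(x) \equiv_{g'^2} g'j(x)$. No decidable equivalence, no $\rho$-function, no appeal to Theorem~\ref{theorem:rhochar} is needed---$g'^2$ is already a recursive permutation because $g'$ is.
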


\begin{proof} \begin{inparaenum}[(i)]
\item For every pair $x, y$ define the relation
\[
  i \Pi_{x,y} j \; :\Leftrightarrow \; i = j \, \vee \,
    \forall k \in \Z, |k| \le \max\{i,j\}: f^k(x) \not= y.
\]
$\Pi_{x,y}$ is easily seen to be reflexive, symmetric and transitive, and
thus an equivalence. Indeed $\Pi_{x,y}$ is a family of uniformly decidable
equivalence relations indexed by $\code{x,y}$. Let $\Pi$ denote their
coproduct. The relation $\Pi_{x,y}$ is defined in such a way that for any
$i$ there exists an $i' > i$ with $i' \Pi_{x,y} i$ iff $(i+1)\Pi_{x,y} i$.
By the uniform decidability of the relations we immediately obtain a uniform
$\rho$ function as in Corollary~\ref{coro:pairrhochar} for this family.
Application of this corollary gives $g \in \Perm \Pi$. Observe that the
block of $0$ in $\Pi_{x,y}$ contains infinitely many elements iff
$f^k(x) \not= y \; \forall k \in \Z$. The embedding is
$j\code{x,y} = \code{x,y,0}$.

\item By Lemma~\ref{lemma:oddlength} we find a $g'$ and an embedding $j$
such that $[j(x)]_{g'}$ is finite iff $[x]_g$ is, and $[j(x)]_{g'}$ is
either infinite or of odd length. If $[j(x)]_{g'}$ is infinite, $j(x)$ and
$g'j(x)$ cannot be in the same cycle of $g'^2$. On the other hand, if
$[j(x)]_{g'}$ is finite, its length is odd. Application of $g'$ on such a
cycle $[j(x)]_{g'}$ imposes the structure of a finite cyclic group of odd
order on the cycle with the group operation $g'^kj(x) \cdot g'^lj(x) :=
g'^{k+l}j(x)$. Since $2$ is coprime to the order of this group, $g'^2j(x)$
is a generator and $[j(x)]_{g'^2}$ must contain $g'j(x)$. We have shown that
$|[x]_g| < \infty$ iff $|[j(x)]_{g'}| < \infty$ iff $j(x) \equiv_{g'^2}
g'j(x)$, thus set $f = g'^2$ and $j' = g'j$.
\end{inparaenum}
\end{proof}

\paragraphtitle{Products in $\Perm$} \parlabel{par:products}
Theorem~\ref{theorem:permconj2} shows that effective conjugacy and effective
cycle type equality are equivalent in $\Perm$. Recall that effective cycle
type equality of two permutations $f, g$ means computable isomorphy of their
respective equivalences $\Part f, \Part g$. To formulate this theorem
constructively, there must be constructive representations of these
equivalences, i.e.~they must be decidable and represented by their deciders,
or by an equivalent means, as presented in \parref{par:decequiv}.
In this way $\Perm$ is the maximal domain for this theorem. As discussed in
the introduction, the theorem is a constructive analogue of the well-known
theorem that conjugacy in a symmetric group is equivalent to cycle type
equality, where $\Perm$ plays the role of the full symmetric group. This
raises the question about the algebraic structure of $\Perm$.
$\G \supseteq \Perm$ imposes its group operation on $\Perm$ and it
is evident that \begin{inparaenum}[(a)] \item $\id \in \Perm$ and \item if
$f \in \Perm$ then $f^{-1} \in \Perm$.\end{inparaenum}

This subsection proves that multiplicative closure fails. The proof uses
structural results of \cite{kent62} to infer the existence of $f, g \in
\Perm$ such that $fg \not\in \Perm$ without constructing them. The present
proof could have been given after Corollary~\ref{coro:permnormal} was
established. A second theorem contains a positive result in the direction
of multiplicative closure, namely that $\Perm$ is closed under multiplication
from left and right with \emph{finitary} permutations. However, it is also
shown that there is a fixed permutation $g \in \Perm$ such that cycle
decidability of finitary products cannot be witnessed uniformly. The cycle
finiteness problem introduced in \parref{par:cyclefinite} plays an important
role in the characterisation of the cases where computing the decider is
possible.

By Corollary~\ref{coro:permnormal}, $\Perm$ is a normal subset in $\G$, so
that the subgroup $\gen{\Perm}$ generated by $\Perm$ is a normal subgroup of
$\G$. Clearly $\Perm$ contains all finitary permutations and a non-finitary
one, e.g.~$\delta\succf\delta^{-1} = \cycle{\dots, 5, 3, 1, 0, 2, 4,
6, \dots}$ whose decider is trivial.

\begin{theorem*}[{\cite[Thm.~2.1]{kent62}}] If $N$ is a normal subgroup of
$\G$ that contains a permutation with infinite support, then $N = \G$.
\end{theorem*}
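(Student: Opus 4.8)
My plan is to adapt the classical Schreier--Ulam--Baer analysis of the normal subgroup structure of symmetric groups to the group $\G$ --- whose normal subgroups one expects to be precisely $\{\id\}$, the finitary alternating group $A_{\mathrm{fin}}$, the finitary symmetric group $F$, and $\G$ itself --- and to carry it out in two phases: first deduce $A_{\mathrm{fin}} \subseteq N$ from any non-identity element of $N$, then bootstrap from the infinite-support hypothesis all the way up to $\G$. Throughout one uses that every finitary permutation belongs to $\G$ and that conjugation by a finitary permutation keeps one inside $\G$.

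\emph{Phase 1.} Pick $f \in N$ and a point $\alpha$ with $f(\alpha) \ne \alpha$. For $\gamma$ chosen outside a suitable finite set depending on $f$ and $\alpha$, the transposition $t = \cycle{\alpha,\gamma}$ is recursive, so the commutator $[f,t] = f^{-1}t^{-1}ft$ lies in $N$ by normality of $N$ in $\G$. A direct computation shows that $[f,t]$ fixes every $x$ with $x \notin \{\alpha,\gamma\}$ and $f(x) \notin \{\alpha,\gamma\}$, so its support lies in the four-element set $\{\alpha,\gamma,f^{-1}(\alpha),f^{-1}(\gamma)\}$, while it moves $f^{-1}(\alpha)$ to $f^{-1}(\gamma)$ and is therefore non-trivial. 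Hence $N \cap F \ne \{\id\}$. Since $F$ normalises $N \cap F$ and $A_{\mathrm{fin}}$ is simple of index $2$ in $F$, the non-trivial normal subgroup $N \cap F$ of $F$ must contain $A_{\mathrm{fin}}$, so $A_{\mathrm{fin}} \subseteq N$. (Equivalently: refine $[f,t]$ to an honest $3$-cycle by further finitary conjugations, and invoke that $A_{\mathrm{fin}}$ is the normal closure in $\G$ of any single $3$-cycle.)

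\emph{Phase 2.} Now the infinite support of $f$ enters, in two cases. If $f$ has an infinite orbit $O$ --- necessarily an infinite recursive set --- then a recursive permutation $h$ supported on $O$ can be designed so that $[f,h] \in N$ acts as a single bi-infinite cycle on $O$ and as the identity elsewhere; thus $N$ contains a recursive permutation $w$ with exactly one infinite cycle and all remaining points fixed. If instead $f$ has infinitely many finite orbits, a recursive permutation $h$ permuting blocks of these orbits yields an element of $N$ that is a recursive disjoint union of finite cycles, and this element can be fused --- using members of $A_{\mathrm{fin}} \subseteq N$ --- into a recursive permutation $w$ again having a single infinite cycle and the rest fixed. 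In either case $w \in N$; by Lemma~\ref{lemma:oneinfdec}, $w$ has decidable cycles, so Proposition~\ref{prop:permconj1} together with normality of $\Perm$ (Corollary~\ref{coro:permnormal}) shows that $N$ contains the whole $\G$-conjugacy class of $w$ --- that is, every recursive permutation whose single infinite cycle is an infinite, co-infinite recursive set. It remains to establish a recursive analogue of the Ore--Bertram factorisation: every $g \in \G$ is a product of boundedly many $\G$-conjugates of $w^{\pm 1}$ and a finitary permutation, where the required factors are realised as concrete recursive permutations and conjugated into $N$ by Proposition~\ref{prop:permconj1}. This gives $g \in N$, hence $N = \G$.

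The main obstacle is the finite-orbits case of Phase 2 together with this closing factorisation. When all orbits of $f$ are finite but cannot be recursively separated or enumerated, producing from $f$ --- by commutators with \emph{recursive} permutations --- an honestly computable ``single infinite cycle'' element of $N$, and then expressing an arbitrary $g \in \G$ as a short product of recursive conjugates of it, are exactly the points where genuine recursion theory must be supplied on top of the purely group-theoretic skeleton of the Schreier--Ulam--Baer argument. Proposition~\ref{prop:permconj1} is the essential tool there, as it turns the abstract coincidence of cycle structure into an explicit recursive conjugating permutation.
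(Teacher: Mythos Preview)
The paper does not prove this theorem; it is quoted verbatim from Kent \cite{kent62} and used as a black box in \parref{par:products} to derive Corollary~\ref{coro:permnotgroup}. There is therefore no in-paper proof against which to compare your proposal.

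That said, your sketch contains a concrete error and, as you yourself note, substantial unfilled gaps. In Phase~2 you assert that an infinite orbit $O$ of $f \in \G$ is ``necessarily an infinite recursive set''. This is false: Kent's Theorem~1.3, quoted in the very same subsection of the paper, produces a recursive permutation one of whose cycles is creative, and Lemma~\ref{lemma:gminusperm} records a $k \in \G$ with the Halting Problem $K$ as a cycle. An infinite orbit of a recursive permutation is always recursively enumerable (enumerate $f^j(x_0)$ over $j \in \Z$) but need not be recursive. Your plan to ``design a recursive permutation $h$ supported on $O$'' therefore breaks down, since any such $h$ would have to decide membership in $O$. A minor point: your appeal to Corollary~\ref{coro:permnormal} is misplaced --- that $N$ contains the full $\G$-conjugacy class of $w$ follows from normality of $N$, not of $\Perm$.

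Beyond this, you have correctly located the hard parts --- the finite-orbits case and the closing Ore--Bertram-type factorisation --- but have supplied only a template rather than the recursion-theoretic arguments that fill it. Kent's original proof is the reference here; the point is not to isolate an orbit of the given $f$ (which may be undecidable) but to manufacture, via commutators with carefully chosen recursive permutations, an element of $N$ whose cycle structure is controllable independently of the decidability of $f$'s orbits.
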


Applying this theorem yields that $\gen{\Perm} = \G$. Since $\Perm$ is
closed under inversion, it follows that every recursive permutation can be
expressed as a finite product of members of $\Perm$. By
Lemma~\ref{lemma:gminusperm} there is a $k \in \G \setminus \Perm$. Then
there is a decomposition $k = f_1\dots f_n$ with $f_i \in \Perm$. Now an
index $1 \le i < n$ must exist such that $f_1\dots f_i \in \Perm$ and
$(f_1\dots f_i)f_{i+1} \not\in \Perm$. This proves

\begin{corollary} \label{coro:permnotgroup} $\Perm$ is not a group. More
precisely, there are $f, g \in \Perm$ such that $fg \not\in \Perm$. \qed
\end{corollary}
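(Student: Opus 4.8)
The plan is to combine the normality of $\Perm$ with Kent's structural theorem \cite[Thm.~2.1]{kent62}. First I would note that $\Perm$ contains a permutation with infinite support, for instance $\delta\succf\delta^{-1}$, whose single cycle is all of $\N$ and whose cycle equivalence is therefore the trivially decidable one-block partition. By Corollary~\ref{coro:permnormal}, $\Perm$ is closed under $\G$-conjugation, and hence the subgroup $\gen{\Perm}$ it generates is normal in $\G$: conjugating a product $f_1\cdots f_n$ of members of $\Perm$ by $h \in \G$ produces $(h^{-1}f_1h)\cdots(h^{-1}f_nh)$, which is again a product of members of $\Perm$. Since $\gen{\Perm}$ is a normal subgroup of $\G$ containing a permutation of infinite support, Kent's theorem forces $\gen{\Perm} = \G$.

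Next, because $\Perm$ is closed under inversion, $\gen{\Perm}$ is exactly the set of finite products of elements of $\Perm$; so every recursive permutation is such a product. Using Lemma~\ref{lemma:gminusperm}, I would fix some $k \in \G \setminus \Perm$ and write $k = f_1\cdots f_n$ with all $f_i \in \Perm$. Running along the partial products $g_j := f_1\cdots f_j$, one has $g_1 = f_1 \in \Perm$ while $g_n = k \notin \Perm$, so there is a largest index $i < n$ with $g_i \in \Perm$, and then $g_{i+1} = g_i f_{i+1} \notin \Perm$. Setting $f := g_i$ and $g := f_{i+1}$ exhibits the required pair, and the proof is complete.

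There is essentially no hard step here; the content is all borrowed from Kent's theorem and Corollary~\ref{coro:permnormal}, and the final pigeonhole on the partial products is two lines. The only points deserving a word of justification are that $\Perm$ genuinely contains an infinite-support permutation --- so that Kent's theorem applies non-degenerately --- and that normality of $\Perm$ as a subset of $\G$ lifts to normality of $\gen{\Perm}$; both are immediate. It should be stressed that the argument is purely non-constructive: it certifies the existence of $f, g \in \Perm$ with $fg \notin \Perm$ without locating them, in sharp contrast to the effective theorems established earlier in the paper.
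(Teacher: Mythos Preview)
Your proposal is correct and follows essentially the same route as the paper: use Corollary~\ref{coro:permnormal} and the infinite-support element $\delta\succf\delta^{-1}$ to see that $\gen{\Perm}$ is a normal subgroup of $\G$ not contained in $F$, invoke Kent's Theorem~2.1 to conclude $\gen{\Perm} = \G$, then factor some $k \in \G \setminus \Perm$ from Lemma~\ref{lemma:gminusperm} and locate the first partial product leaving $\Perm$. Your write-up even supplies the small justifications (why normality of the subset lifts to the generated subgroup, and the non-constructive nature of the argument) that the paper leaves implicit.
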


The proof did not give examples of such $f, g$. They may be found by
factoring $k \in \G \setminus \Perm$ into members of $\Perm$. Such a
factorisation exists for every such $k$ and always yields counterexamples
to the closure of $\Perm$, as seen above.

While $\Perm$ is not closed under multiplication with itself, one might
expect that changes with finite support do not disturb cycle decidability.
This is only partially true. $\Perm$ is closed under multiplication with
$F$, the set of permutations with finite support, but a decider for such
a product cannot always be computed.

\begin{lemma} \label{lemma:transcf} Let $f \in \Perm$. There exists a
recursive function $\rho\code{x,y}$ such that for all $x, y$ the function
$\varphi_{\rho\code{x,y}}$ decides the permutation $\cycle{x,y}f$ iff
$\textsc{CF}(f)$ is decidable. In this case $\rho$ can be constructed
from $f$, a decider for its cycles and a decider for its cycle
finiteness problem.
\end{lemma}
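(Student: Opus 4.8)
The plan hinges on an explicit description of what multiplying $f$ by a single transposition does to its cycle partition. Set $g:=\cycle{x,y}f$ and assume $x\ne y$, the case $x=y$ being trivial since then $g=f$ and $\pi$ itself works. First I would record: if $x\equiv_f y$, then $g$ splits the cycle $[x]_f$ into two $g$-cycles --- the finite ``arc'' running from $x$ to $y$ along $f$, and its complement inside $[x]_f$ --- and fixes every other cycle. If $x\not\equiv_f y$, then when at least one of $[x]_f$, $[y]_f$ is finite the two cycles fuse into the single $g$-cycle $[x]_f\cup[y]_f$, whereas when \emph{both} are infinite, writing $a_i:=f^i(x)$ and $b_i:=f^i(y)$, the set $[x]_f\cup[y]_f$ recombines into the two infinite $g$-cycles $\{a_i:i<0\}\cup\{b_i:i\ge 0\}$ and $\{a_i:i\ge 0\}\cup\{b_i:i<0\}$; all of this is immediate by following arrows. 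In every regime but the last, the partition $g$ induces on $[x]_f\cup[y]_f$ can be read off with no cardinality information; it is precisely the dichotomy ``two infinite cycles'' versus ``at least one finite cycle'' that a decider for $\equiv_f$ cannot resolve, and this is where $\textsc{CF}(f)$ must enter.

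For ``$\textsc{CF}(f)$ decidable $\Rightarrow$ $\rho$ exists'': given a decider $\pi$ for $\equiv_f$ and a decider $\kappa$ for $\textsc{CF}(f)$, I would describe $\varphi_{\rho\code{x,y}}$ on an input $(w,w')$ by branching on $\pi(x,y)$. In the split case $x\equiv_f y$: report $\fr f$ unless $w\equiv_f w'$; if $w\equiv_f w'$ but $w\not\equiv_f x$ report $\fr t$; and if $w\equiv_f w'\equiv_f x$, compute $k_0:=\xi k[f^k(x)=y]$ with the operator of Definition~\ref{def:delta}, form the finite list $C$ of the $f^j(x)$ for $j$ strictly between $0$ and $k_0$ together with the endpoint at $0$ or at $k_0$ on the side of $0$ --- concretely $\{f^j(x):0\le j<k_0\}$ if $k_0>0$ and $\{f^j(x):k_0\le j<0\}$ if $k_0<0$, which is exactly one of the two $g$-cycles comprising $[x]_f$ --- and report $\fr t$ iff $w,w'$ both lie in $C$ or both avoid it. In the case $x\not\equiv_f y$: consult $\kappa(x),\kappa(y)$; if either cycle is finite, report $\fr t$ iff $w\equiv_f w'$ or ($w\equiv_f x$ and $w'\equiv_f y$) or ($w\equiv_f y$ and $w'\equiv_f x$); if both are infinite, report $\pi(w,w')$ when neither $w$ nor $w'$ lies in $[x]_f\cup[y]_f$, report $\fr f$ when exactly one of them does, and otherwise give $w$ the colour $\fr t$ iff ($w=f^k(x)$ for some $k<0$) or ($w=f^k(y)$ for some $k\ge 0$), likewise for $w'$, and report $\fr t$ iff the two colours agree --- the colour classes being exactly the two recombined $g$-cycles. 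Every search invoked terminates because the relevant $\equiv_f$-memberships have already been confirmed, so $\varphi_{\rho\code{x,y}}$ is total and decides $\equiv_g$; an index $\rho$ is then obtained uniformly from $f,\pi,\kappa$ by the $s^m_n$ theorem.

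For the converse ``$\rho$ exists $\Rightarrow$ $\textsc{CF}(f)$ decidable'' I would split (non-uniformly) on $f$. If $f\in\G_1$, then $\textsc{CF}(f)$ is already decidable by Proposition~\ref{prop:fininfcf}, with no use of $\rho$. If $f\notin\G_1$, so $f$ has infinitely many infinite cycles, then on input $x$ I dovetail two semi-searches: search (A) looks for $k\ge 1$ with $f^k(x)=x$ and, on success, outputs $\fr t$; search (B) enumerates every $y$ with $y\not\equiv_f x$ (using a decider for $\equiv_f$, which exists since $f\in\Perm$) and, for each such $y$, uses $\varphi_{\rho\code{x,y}}$ to test whether $f^{-1}(x)$ and $x$ lie in the same cycle of $\cycle{x,y}f$; by the structural description this test fails exactly when $[x]_f$ and $[y]_f$ both recombine into infinite cycles, and in that event (B) outputs $\fr f$. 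This halts for every $x$: if $[x]_f$ is finite, (A) succeeds; if $[x]_f$ is infinite, then since $f$ has infinitely many infinite cycles some $y\not\equiv_f x$ has $[y]_f$ infinite, and (B) succeeds at that $y$ or an earlier one. The two verdicts never conflict, so $\textsc{CF}(f)$ is decided.

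The step I expect to be the main obstacle is getting the combinatorics of the five regimes exactly right --- in particular spotting the ``recombination'' behaviour of two infinite cycles, which is not a naive merge and is what singles out $\textsc{CF}(f)$ as the one obstruction. Once that picture is secure, the construction in ``$\Leftarrow$'' is bookkeeping, and the ``$\Rightarrow$'' direction reduces to the remark that a permutation with infinitely many infinite cycles always furnishes, for any point of an infinite cycle, a companion infinite cycle whose presence the family $\rho$ detects, while permutations in $\G_1$ are handled outright by Proposition~\ref{prop:fininfcf}.
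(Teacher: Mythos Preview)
Your proof is correct. The ``$\Leftarrow$'' direction matches the paper's argument essentially verbatim: the same three-regime analysis (split, merge, recombine) driven by $x\equiv_f y$ and, when not, by whether at least one of the two cycles is finite.

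The ``$\Rightarrow$'' direction is where you diverge. The paper makes a simpler non-uniform split: either $f$ has \emph{no} infinite cycle (so $\textsc{CF}(f)$ is constant), or one fixes a single $y_0$ lying in an infinite cycle and, for each $x$ with $x\not\equiv_f y_0$, tests directly whether $x\equiv_{\cycle{x,y_0}f} y_0$; by the merge/recombine dichotomy this holds iff $[x]_f$ is finite. No search, no dovetailing --- one call to $\varphi_{\rho\code{x,y_0}}$ per input. Your version instead splits on $f\in\G_1$ versus $f\notin\G_1$, handing the first case to Proposition~\ref{prop:fininfcf} and, in the second, running a dovetailed search over \emph{all} $y\not\equiv_f x$ for a recombination witness via the test $f^{-1}(x)\not\equiv_{\cycle{x,y}f} x$. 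This works because ``infinitely many infinite cycles'' guarantees such a $y$ exists, but it is more elaborate than necessary: the paper's single fixed $y_0$ already suffices under the weaker hypothesis ``at least one infinite cycle''. What your route buys is that you never have to name a specific element of an infinite cycle; what it costs is the extra machinery of Proposition~\ref{prop:fininfcf} and the dovetailing.
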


\begin{figure}
\tikzset{
  ball/.style={
    circle,
    minimum size=0pt,
    inner sep=0pt,
    font=\footnotesize
  }
}
\renewcommand{\thesubfigure}{{\normalfont\alph{subfigure}}}
\makebox[\linewidth][c]{%
\subcaptionbox{$x \equiv_f y$ \label{fig:transmulta}}{%
  \begin{tikzpicture}[nodes=draw,scale=.9] 
  \node[draw=none] (b10)   at (1,1)      {$\strut\dots$};
  \node[ball]      (b11)   at (1,2)      {$\strut$};
  \node[ball]      (b12)   at (1,3)      {$\strut x$};
  \node[ball]      (b13)   at (1,4)      {$\strut$};
  \node[draw=none] (b14)   at (1,5)      {$\strut\dots$};
  \node[ball]      (b15)   at (1,6)      {$\strut$};
  \node[ball]      (b16)   at (1,7)      {$\strut y$};
  \node[ball]      (b17)   at (1,8)      {$\strut$};
  \node[draw=none] (b18)   at (1,9)      {$\strut\dots$};
  \node[draw=none] (l1)    at (1,0.3)    {$f$};

  \node[draw=none,font=\Large] (rarr)   at (2,5)    {$\strut\Rightarrow$};

  \node[draw=none] (b30)   at (3.5,1)      {$\strut\dots$};
  \node[ball]      (b31)   at (3.5,2)      {$\strut$};
  \node[ball]      (b32)   at (3.5,3)      {$\strut x$};
  \node[ball]      (b33)   at (3.5,4)      {$\strut$};
  \node[draw=none] (b34)   at (3.5,5)      {$\strut\dots$};
  \node[ball]      (b35)   at (3.5,6)      {$\strut$};
  \node[ball]      (b36)   at (3.5,7)      {$\strut y$};
  \node[ball]      (b37)   at (3.5,8)      {$\strut$};
  \node[draw=none] (b38)   at (3.5,9)      {$\strut\dots$};
  \node[draw=none] (l2)    at (3.5,0.3)    {$\cycle{x,y}f$};

  \path[every node/.style={font=\sffamily\small}]
    (b10.north) edge[->] (b11.south)
    (b11.north) edge[->] (b12.south)
    (b12.north) edge[->] (b13.south)
    (b13.north) edge[->] (b14.south)
    (b14.north) edge[->] (b15.south)
    (b15.north) edge[->] (b16.south)
    (b16.north) edge[->] (b17.south)
    (b17.north) edge[->] (b18.south)

    (b30.north) edge[->] (b31.south)
      (b31.west) edge[->,bend left=15] (b36.west)
    (b32.north) edge[->] (b33.south)
    (b33.north) edge[->] (b34.south)
    (b34.north) edge[->] (b35.south)
      (b35.east) edge[->,bend left=15] (b32.east)
    (b36.north) edge[->] (b37.south)
    (b37.north) edge[->] (b38.south);
\end{tikzpicture}%
}
\hskip 1.0cm
\subcaptionbox{$x \not\equiv_f y, \; x \in \textsc{CF}(f)$
\label{fig:transmultb}}{%
  \begin{tikzpicture}[nodes=draw,scale=.9] 
  \node[ball]      (b10)   at (1,1)      {$\strut$};
  \node[draw=none] (b11)   at (1,2)      {$\strut\dots$};
  \node[ball]      (b12)   at (1,3)      {$\strut$};
  \node[ball]      (b13)   at (1,4)      {$\strut x$};
  \node[ball]      (b14)   at (1,5)      {$\strut$};
  \node[draw=none] (b15)   at (1,6)      {$\strut\dots$};
  \node[ball]      (b16)   at (1,7)      {$\strut$};
  \node[draw=none] (b20)   at (2,2)      {$\strut\dots$};
  \node[ball]      (b21)   at (2,3)      {$\strut$};
  \node[ball]      (b22)   at (2,4)      {$\strut y$};
  \node[ball]      (b23)   at (2,5)      {$\strut$};
  \node[draw=none] (b24)   at (2,6)      {$\strut\dots$};
  \node[draw=none] (l1)    at (1.5,0.3)  {$f$};

  \node[draw=none,font=\Large] (rarr)   at (3,4)    {$\strut\Rightarrow$};

  \node[ball]      (b40)   at (4.5,1)      {$\strut$};
  \node[draw=none] (b41)   at (4.5,2)      {$\strut\dots$};
  \node[ball]      (b42)   at (4.5,3)      {$\strut$};
  \node[ball]      (b43)   at (4.5,4)      {$\strut x$};
  \node[ball]      (b44)   at (4.5,5)      {$\strut$};
  \node[draw=none] (b45)   at (4.5,6)      {$\strut\dots$};
  \node[ball]      (b46)   at (4.5,7)      {$\strut$};
  \node[draw=none] (b50)   at (5.5,2)      {$\strut\dots$};
  \node[ball]      (b51)   at (5.5,3)      {$\strut$};
  \node[ball]      (b52)   at (5.5,4)      {$\strut y$};
  \node[ball]      (b53)   at (5.5,5)      {$\strut$};
  \node[draw=none] (b54)   at (5.5,6)      {$\strut\dots$};
  \node[draw=none] (l2)    at (5,0.3)      {$\cycle{x,y}f$};

  \path[every node/.style={font=\sffamily\small}]
    (b10.north) edge[->] (b11.south)
    (b11.north) edge[->] (b12.south)
    (b12.north) edge[->] (b13.south)
    (b13.north) edge[->] (b14.south)
    (b14.north) edge[->] (b15.south)
    (b15.north) edge[->] (b16.south)
    (b16.west) edge[->,bend right=15] (b10.west)
    (b20.north) edge[->] (b21.south)
    (b21.north) edge[->] (b22.south)
    (b22.north) edge[->] (b23.south)
    (b23.north) edge[->] (b24.south)

    (b40.north) edge[->] (b41.south)
    (b41.north) edge[->] (b42.south)
      (b42.north) edge[->] (b52.south)
    (b43.north) edge[->] (b44.south)
    (b44.north) edge[->] (b45.south)
    (b45.north) edge[->] (b46.south)
    (b46.west) edge[->,bend right=15] (b40.west)
    (b50.north) edge[->] (b51.south)
      (b51.north) edge[->] (b43.south)
    (b52.north) edge[->] (b53.south)
    (b53.north) edge[->] (b54.south);
\end{tikzpicture}%
}
\hskip 1.0cm
\subcaptionbox{$x \not\equiv_f y, \; x, y \not\in \textsc{CF}(f)$
\label{fig:transmultc}}{%
  \begin{tikzpicture}[nodes=draw,scale=.9] 
  \node[draw=none] (b10)   at (1,1)      {$\strut\dots$};
  \node[ball]      (b11)   at (1,2)      {$\strut$};
  \node[ball]      (b12)   at (1,3)      {$\strut x$};
  \node[ball]      (b13)   at (1,4)      {$\strut$};
  \node[ball]      (b14)   at (1,5)      {$\strut$};
  \node[draw=none] (b15)   at (1,6)      {$\strut\dots$};
  \node[draw=none] (b20)   at (2,1)      {$\strut\dots$};
  \node[ball]      (b21)   at (2,2)      {$\strut$};
  \node[ball]      (b22)   at (2,3)      {$\strut y$};
  \node[ball]      (b23)   at (2,4)      {$\strut$};
  \node[ball]      (b24)   at (2,5)      {$\strut$};
  \node[draw=none] (b25)   at (2,6)      {$\strut\dots$};
  \node[draw=none] (l1)    at (1.5,0.3)  {$f$};

  \node[draw=none,font=\Large] (rarr)   at (3,3.5)    {$\strut\Rightarrow$};

  \node[draw=none] (b40)    at (4,1)      {$\strut\dots$};
  \node[ball]      (b41)    at (4,2)      {$\strut$};
  \node[ball]      (b42)    at (4,3)      {$\strut x$};
  \node[ball]      (b43)    at (4,4)      {$\strut$};
  \node[ball]      (b44)    at (4,5)      {$\strut$};
  \node[draw=none] (b45)    at (4,6)      {$\strut\dots$};
  \node[draw=none] (b50)    at (5,1)      {$\strut\dots$};
  \node[ball]      (b51)    at (5,2)      {$\strut$};
  \node[ball]      (b52)    at (5,3)      {$\strut y$};
  \node[ball]      (b53)    at (5,4)      {$\strut$};
  \node[ball]      (b54)    at (5,5)      {$\strut$};
  \node[draw=none] (b55)    at (5,6)      {$\strut\dots$};
  \node[draw=none] (l2)     at (4.5,0.3)  {$\cycle{x,y}f$};

  \path[every node/.style={font=\sffamily\small}]
    (b10.north) edge[->] (b11.south)
    (b11.north) edge[->] (b12.south)
    (b12.north) edge[->] (b13.south)
    (b13.north) edge[->] (b14.south)
    (b14.north) edge[->] (b15.south)
    (b20.north) edge[->] (b21.south)
    (b21.north) edge[->] (b22.south)
    (b22.north) edge[->] (b23.south)
    (b23.north) edge[->] (b24.south)
    (b24.north) edge[->] (b25.south)

    (b40.north) edge[->] (b41.south)
      (b41.north) edge[->] (b52.south)
    (b42.north) edge[->] (b43.south)
    (b43.north) edge[->] (b44.south)
    (b44.north) edge[->] (b45.south)
    (b50.north) edge[->] (b51.south)
      (b51.north) edge[->] (b42.south)
    (b52.north) edge[->] (b53.south)
    (b53.north) edge[->] (b54.south)
    (b54.north) edge[->] (b55.south);
\end{tikzpicture}%
}
}
\caption{The functional digraphs of $f$ and $\cycle{x,y}f$ under different
assumptions on $x, y$. The \emph{functional digraph} of a recursive function
$f$ is a directed graph on vertices $\N$ with an edge from $x$ to $y$ iff
$f(x) = y$. Cycles of permutations are weakly connected components in the
functional digraph. Barring exchange of $x$ and $y$ the shown cases
(a)---(c) are exhaustive.}
\label{fig:transmult}
\end{figure}
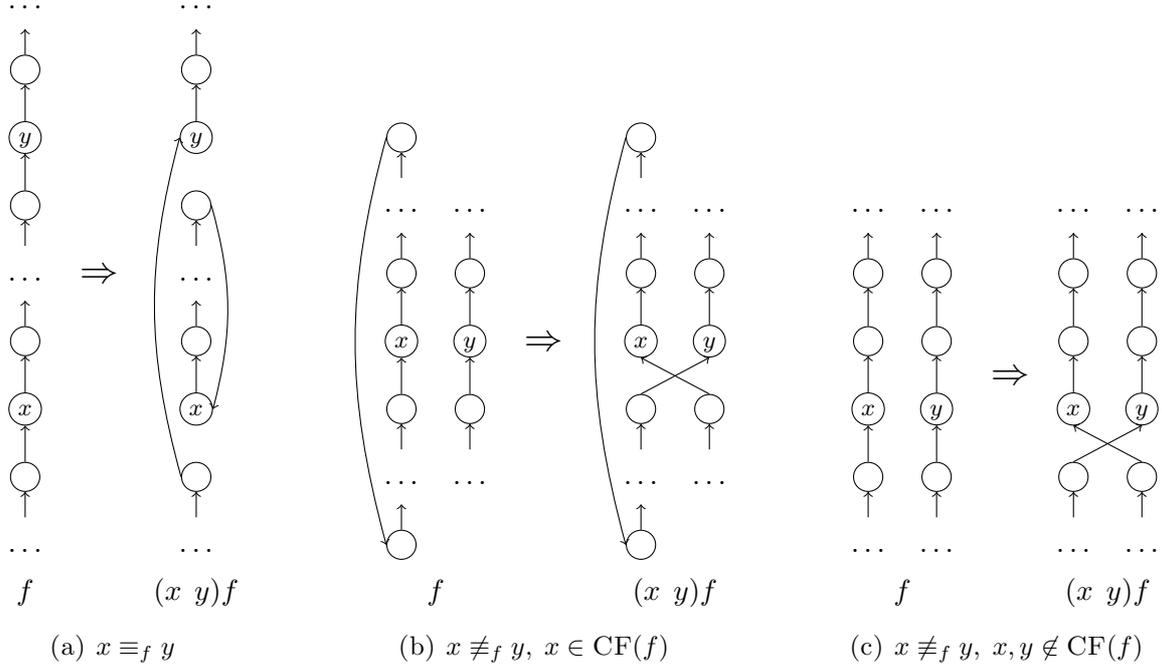

\begin{proof} We begin the proof with ``$\Leftarrow$'' which uses all cases
depicted in Figure~\ref{fig:transmult}. This figure will be used as an
argument in place of a formal calculation in the following proofs.

``$\Leftarrow$'': Suppose $\textsc{CF}(f)$ is decidable. We distinguish
three cases (a)---(c) which correspond to the pictures in
Figure~\ref{fig:transmult}. Which case applies is decidable by our
prerequisites.

\begin{inparaenum}[(a)]
\item If $x \equiv_f y$, then either $x = y$, which is trivial, or there is
an $i > 0$ such that $f^i(x) = y$ or $f^i(y) = x$. By computing $\mu i[i > 0
\wedge \{f^i(x), f^i(y)\} \cap \{x, y\} \not= \emptyset]$, which terminates,
we can find $i$ and determine which of $x$ and $y$ is the ``upper'' and
which is the ``lower'' number in the functional digraph. We can assume, by
symmetry, that $x$ is the lower number, i.e.~$f^i(x) = y$ with $i > 0$. In
$\cycle{x,y}f$ all cycles of $f$ except $[x]_f = [y]_f$ are left untouched.
The cycle $[x]_f$ is split into two cycles: one contains $x, f(x), \dots,
f^{i-1}(x) = f^{-1}(y)$ and the other contains the rest. An algorithm to
decide the cycles of $\cycle{x,y}f$ is obvious.

\item Assume $x \not\equiv_f y$ and $x \in \textsc{CF}(f) \vee y \in
\textsc{CF}(f)$. Without loss of generality let $x \in \textsc{CF}(f)$.
The cycles of $f$ outside of $[x]_f, [y]_f$ are unchanged in $\cycle{x,y}f$.
The cycles $[x]_f$ and $[y]_f$ fuse in $\cycle{x,y}f$ as can be seen in
Figure~\ref{fig:transmultb}: take any two $i, j > 0$, then follow the
arrows $f^{-i}(y), \dots, f^{-1}(y), x, f(x), \dots, f^{-1}(x), y, f(y),
\dots, f^j(y)$.

\item Assume finally $x \not\equiv_f y$ and $x \not\in \textsc{CF}(f) \wedge
y \not\in \textsc{CF}(f)$. Once again the cycles of $f$ apart from $[x]_f$
and $[y]_f$ remain unchanged. In $\cycle{x,y}f$ the cycles $[x]_f$ and
$[y]_f$ are split into four sets and recombined into two different cycles as
Figure~\ref{fig:transmultc} indicates. More specifically: because $[x]_f$ is
infinite, each $x' \in [x]_f$ has a unique number $k_x(x') =
\xi k[f^k(x) = x']$; similarly for $y' \in [y]_f$. Those $x'$ with $k_x(x')
< 0$ are in $[y]_{\cycle{x,y}f}$ and those with $k_x(x') \ge 0$ are in
$[x]_{\cycle{x,y}f}$. Analogously $y' \in [y]_f$ is
$y' \in [y]_{\cycle{x,y}f} \Leftrightarrow k_y(y') \ge 0$ and in
$[x]_{\cycle{x,y}f}$ else.
\end{inparaenum}

Given $f$ and deciders for the cycles of $f$ and its cycle finiteness
problem, the construction of a decider for $\cycle{x,y}f$ is uniform in
$x$ and $y$.

``$\Rightarrow$'': This direction is not uniform in $f$. If $f$
has no infinite cycles, then a decider for $\textsc{CF}(f)$ is constant
and therefore computable. If $f$ has at least one infinite cycle,
let $y_0$ denote a number in an infinite cycle of $f$. We want to decide
$\textsc{CF}(f)$. Let $x$ be given. If $x \equiv_f y_0$, then $x$ is
obviously in an infinite cycle. Else we obtain a decider for
$\cycle{x,y_0}f$. From Figure~\ref{fig:transmult}, with $x \not\equiv_f
y_0$, we see that
\[
  x \in \textsc{CF}(f) \vee y_0 \in \textsc{CF}(f) \Leftrightarrow
  x \equiv_{\cycle{x,y_0}f} y_0
\]
whose RHS is decidable and whose LHS is equivalent to $x \in
\textsc{CF}(f)$, by the choice of $y_0$. This gives a method to decide
cycle finiteness.
\end{proof}

\begin{lemma} \label{lemma:semitranssemi} There is an algorithm uniform in
$f \in \Permcf$, a decider for its cycles and its cycle finiteness problem
which computes to every pair $x, y$ a decider for the cycles and the cycle
finiteness problem of $\cycle{x,y}f$.
\end{lemma}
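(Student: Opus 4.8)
The plan is to produce the two required deciders in turn. A decider for the cycles of $\cycle{x,y}f$ is already available, uniformly in $x$ and $y$, from Lemma~\ref{lemma:transcf}, since $f\in\Permcf\subseteq\Perm$ and $\textsc{CF}(f)$ is decidable; so the task reduces to constructing, uniformly in $x,y$ and the given deciders for the cycles and for the cycle finiteness problem of $f$, a decider for $\textsc{CF}(\cycle{x,y}f)$, which also shows $\cycle{x,y}f\in\Permcf$.

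First I would dispose of the trivial case $x=y$, where $\cycle{x,y}f=f$. Assuming $x\neq y$, the cycle decider of $f$ together with the decider for $\textsc{CF}(f)$ lets us decide which of the three situations of Figure~\ref{fig:transmult} applies to $x,y$: whether $x\equiv_f y$ (case~(a)); if not, whether $x$ or $y$ lies in $\textsc{CF}(f)$ (case~(b)); otherwise case~(c). Outside $[x]_f\cup[y]_f$ the cycles of $\cycle{x,y}f$ coincide with those of $f$, so only finiteness inside $[x]_f\cup[y]_f$ has to be tracked, and Figure~\ref{fig:transmult} describes how it changes. In case~(a) with $[x]_f$ finite, $[x]_f$ splits into two finite cycles, so $\textsc{CF}(\cycle{x,y}f)=\textsc{CF}(f)$; with $[x]_f$ infinite, I would compute $i:=\mu i[i\ge 1\wedge(f^i(x)=y\vee f^i(y)=x)]$ and, after possibly interchanging $x$ and $y$ so that $f^i(x)=y$, use that $[x]_f$ splits into the finite cycle $\{x,f(x),\dots,f^{i-1}(x)\}$ and one infinite cycle, so that $z$ lies in a finite cycle of $\cycle{x,y}f$ iff $z\in\textsc{CF}(f)$ or $z\in\{x,f(x),\dots,f^{i-1}(x)\}$. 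In case~(b), say $x\in\textsc{CF}(f)$: the cycles $[x]_f$ and $[y]_f$ merge into the single cycle $[x]_f\cup[y]_f$, which is finite iff $y\in\textsc{CF}(f)$, all other cycles being unchanged; thus $z$ lies in a finite cycle of $\cycle{x,y}f$ iff ($z\notin[x]_f\cup[y]_f$ and $z\in\textsc{CF}(f)$) or ($z\in[x]_f\cup[y]_f$ and $y\in\textsc{CF}(f)$). In case~(c) both $[x]_f$ and $[y]_f$ are infinite and $\cycle{x,y}f$ recombines them into two cycles each containing infinitely many elements of $[x]_f$ (Figure~\ref{fig:transmultc}), hence both infinite, so again $\textsc{CF}(\cycle{x,y}f)=\textsc{CF}(f)$.

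Assembling these cases yields a decider for $\textsc{CF}(\cycle{x,y}f)$ computed uniformly in $f$, the two given deciders, and $x,y$; combined with Lemma~\ref{lemma:transcf} this is the asserted algorithm. I do not expect a genuine obstacle: the argument just translates Figure~\ref{fig:transmult} into statements about finiteness of the resulting cycles, and the only place needing a little care is locating $z$ among the two pieces of a split infinite cycle in case~(a), which the index $i$ resolves.
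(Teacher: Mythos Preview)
Your proposal is correct and follows precisely the approach the paper uses: invoke Lemma~\ref{lemma:transcf} for the cycle decider, and read off the cycle-finiteness decider from the case distinction of Figure~\ref{fig:transmult}. The paper's own proof is extremely terse---it literally says that using Figure~\ref{fig:transmult} together with the two given deciders ``it is easy to describe'' the required decider---so your write-up is simply a fleshed-out version of what the paper leaves implicit, and your case analysis (finite versus infinite split in~(a), merge in~(b), recombination into two infinite cycles in~(c)) is accurate.
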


\begin{proof} Lemma~\ref{lemma:transcf} yields a decider for the cycles of
$\cycle{x,y}f$. Using Figure~\ref{fig:transmult}, a decider for the cycles
of $f$ and for its cycle finiteness problem, it is easy to describe a
decider for the cycle finiteness problem of $\cycle{x,y}f$.
\end{proof}

The transpositions can be enumerated via $\code{x, y} \mapsto \cycle{x,y}$,
for which an algorithm $\tau$ is immediate. The term ``transposition''
is meant to include the improper transposition $\id$, as it is also
enumerated by $\tau$. The finitary permutations $F$ can be enumerated
via the following algorithm which uses the fact that $F$ is the subgroup
generated by transpositions: given any number $z$, decode it into
$z = \code{n, z'}$ and then decode $z' = \code{x_1, x_2, \dots, x_n}$.
This tuple can be mapped uniformly effectively to a program for computing
$\varphi_{\tau(x_1)}\dots\varphi_{\tau(x_n)}$. Call this enumeration~$\eta$.

\begin{theorem} \label{theorem:afbperm} For $f \in \Perm$ and $a, b \in F$
it is $afb \in \Perm$. If $f \in \Permcf$ then $afb \in \Permcf$. Given $f
\in \Permcf$, a decider for its cycles and its cycle finiteness problem,
as well $\eta$ indices for $a$ and $b$, we can find a decider for the
cycles and the cycle finiteness problem of $afb$.
\end{theorem}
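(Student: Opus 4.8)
\emph{Proposal.} The plan is to reduce the entire statement to the case of multiplication by a single transposition, which is already covered by Lemma~\ref{lemma:transcf} (existence of a decider) and Lemma~\ref{lemma:semitranssemi} (uniform construction of deciders in the $\Permcf$ case). Since $F$ is generated by transpositions, write $a=\tau(x_1)\cdots\tau(x_k)$ and $b=\tau(y_1)\cdots\tau(y_m)$, so that $afb$ is obtained from $f$ by finitely many one-sided multiplications with transpositions. It therefore suffices to treat $\cycle{x,y}f$ and $f\cycle{x,y}$ for $f\in\Perm$ and then induct on $k+m$. Moreover right multiplication reduces to left multiplication: $f\cycle{x,y}=(\cycle{x,y}f^{-1})^{-1}$, and $\Perm$ (resp.\ $\Permcf$) is closed under inversion, while $\Part g=\Part g^{-1}$ and $\textsc{CF}(g)=\textsc{CF}(g^{-1})$, so a decider for the cycles (resp.\ for the cycle finiteness problem) of $g^{-1}$ is one for $g$ and conversely. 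Hence only $\cycle{x,y}f$ needs to be analysed.

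For the first two assertions I would argue non-constructively along the case distinction of Figure~\ref{fig:transmult}. The functional digraph of $\cycle{x,y}f$ differs from that of $f$ only in the out-edges of $f^{-1}(x)$ and $f^{-1}(y)$, so every cycle of $\cycle{x,y}f$ which meets neither $x$ nor $y$ is already a cycle of $f$, and the recursive set $R:=[x]_f\cup[y]_f$ (recursive because $f\in\Perm$) is a union of at most two cycles of $\cycle{x,y}f$. In case (a), $x\equiv_f y$ and $R$ splits into two arcs; in case (b), one of $x,y$ lies in a finite cycle and $R$ becomes a single cycle; in case (c), both lie in infinite cycles and $R$ splits into two cycles, separated by the sign of the $f$-coordinate $\xi k[f^k(x)=x']$. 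In each of these finitely many cases a decider for $\Part(\cycle{x,y}f)$ is readily exhibited (test membership in $R$ via $\pi$; inside $R$ use the explicit description, outside defer to $\pi$), so $\cycle{x,y}f\in\Perm$ unconditionally. Likewise $\textsc{CF}(\cycle{x,y}f)$ and $\textsc{CF}(f)$ agree off $R$, and on $R$ the former is constant, whence decidability of $\textsc{CF}(f)$ yields decidability of $\textsc{CF}(\cycle{x,y}f)$; so $f\in\Permcf$ implies $\cycle{x,y}f\in\Permcf$. Feeding this through the induction on $k+m$ proves $afb\in\Perm$ for $f\in\Perm$ and $afb\in\Permcf$ for $f\in\Permcf$.

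For the effectiveness claim I would proceed purely by iteration of Lemma~\ref{lemma:semitranssemi}. Given $f\in\Permcf$ together with deciders $\gamma$ for its cycles and $\kappa$ for $\textsc{CF}(f)$, and $\eta$-indices for $a$ and $b$, first recover from those indices (using $\tau$) the transposition lists $x_1,\dots,x_k$ and $y_1,\dots,y_m$. Now apply Lemma~\ref{lemma:semitranssemi} repeatedly: each left factor $\tau(x_i)$ is absorbed directly, producing from the current permutation in $\Permcf$ and its two deciders a new pair of deciders; each right factor $\tau(y_j)$ is absorbed by applying Lemma~\ref{lemma:semitranssemi} to the inverse of the current permutation (whose two deciders are the same) and then forming the inverse again. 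After $k+m$ such uniform steps one obtains a program for $afb$ together with deciders for its cycles and its cycle finiteness problem, all computed uniformly from $f$, $\gamma$, $\kappa$ and the $\eta$-indices of $a,b$.

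I do not expect a genuine obstacle here, since the substantive work is already packaged in Lemmata~\ref{lemma:transcf} and~\ref{lemma:semitranssemi}; the one point requiring care is that in the first two assertions the decider for $\cycle{x,y}f$ \emph{cannot} be produced from $f$ and $\pi$ alone — this is precisely the ``$\Rightarrow$'' content of Lemma~\ref{lemma:transcf} — so one must establish existence of the decider by a finite case distinction on which of (a)--(c) applies, rather than by uniform construction, and observe that this suffices for the purely existential claims while the constructive statement is confined to $\Permcf$, where $\textsc{CF}(f)$ being decidable resolves the case distinction.
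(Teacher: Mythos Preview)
Your proposal is correct and follows essentially the same route as the paper: reduce to a single left-transposition via the inverse trick $f\cycle{x,y}=(\cycle{x,y}f^{-1})^{-1}$, handle the non-constructive $\Perm$ claim by the case split (a)--(c) of Figure~\ref{fig:transmult} (the paper phrases this as reading the ``$\Leftarrow$'' proof of Lemma~\ref{lemma:transcf} with an oracle for $\textsc{CF}(f)$, noting that in each fixed case the resulting decider is genuinely recursive), and iterate Lemma~\ref{lemma:semitranssemi} for the constructive $\Permcf$ claim. One small slip to fix: in case~(a), $\textsc{CF}(\cycle{x,y}f)$ is \emph{not} constant on $R=[x]_f$ --- the arc $\{x,f(x),\dots,f^{-1}(y)\}$ is always finite while the complementary arc is finite iff $[x]_f$ was --- but membership in that arc is decidable from $f$ and the index $i$, so the conclusion that $\textsc{CF}(\cycle{x,y}f)$ is decidable still goes through.
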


\begin{proof} We first show the non-constructive part with $f \in \Perm$.
It suffices to prove the statement under the assumption that $b = \id$:
assume we have shown $\forall f \in \Perm \forall a \in F: af \in \Perm$,
then $afb \in \Perm$ iff $fb \in \Perm$ iff $b^{-1}f^{-1} \in \Perm$ which
is true because $b^{-1} \in F$ and $\Perm$ is closed under inversion. Since
$a$ has finite support, we only need to consider the case where
$a = \cycle{x,y}$ is a transposition and can proceed by induction. Then the
proof is almost taken care of in Lemma~\ref{lemma:transcf}. Indeed reading
the ``$\Leftarrow$'' proof with an oracle for $\textsc{CF}(f)$ (instead of
the assumption that this problem is decidable) gives a decider for
$\cycle{x,y}f$ in each of the three cases considered there. The oracle is
only used to decide which case applied; the constructed deciders are
recursive in each separate case in $f$ and a decider for its cycles only.
Thus a recursive decider always exists and $\cycle{x,y}f \in \Perm$.

The constructive part with $f \in \Permcf$ works in the same way but must
be executed more carefully. Let $a = \varphi_{\eta(z)}$, $b =
\varphi_{\eta(w)}$. The $\eta$ index $z$ encodes a decomposition of $a$ into
transpositions. Using Lemma~\ref{lemma:semitranssemi} inductively on this
decomposition, we obtain deciders for $af$ as follows. At the beginning of
each induction step we have a permutation $g$ (initially $g = f$), deciders
for its cycles and cycle finiteness problem, and the two constituents $x, y$
of the transposition $\cycle{x,y}$. Applying Lemma~\ref{lemma:semitranssemi}
we obtain a decider for the cycles and the cycle finiteness problem of
$\cycle{x,y}g$. The next induction step can then be performed on
$\cycle{x,y}g$.

After this inductive process, we have a decider for the cycles and the cycle
finiteness problem of $af$, which also decide these problems for
$f' = f^{-1}a^{-1}$. From $w$, compute $w'$ such that
$\varphi_{\eta(w')} = b^{-1} \in F$ and apply the same procedure to
$b^{-1}f'$. The deciders we obtain at the end decide the cycles and the
cycle finiteness problem of $b^{-1}f' = b^{-1}f^{-1}a^{-1}$ and also those
of its inverse, $afb$.
\end{proof}

\begin{corollary} \label{coro:permclosure} \
\begin{compactenum}[\hskip .99em(i)]
\item $\Perm$ is closed under multiplication with finitary permutations.
\item $\Permcf$ is constructively closed under multiplication with
finitary permutations.
\item There exists a $g \in \Perm$ such that there is no recursive mapping
of pairs $\code{x, y}$ to a decider for the permutation $\cycle{x,y}g$.
\end{compactenum}
\end{corollary}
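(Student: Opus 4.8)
The plan is to obtain parts (i) and (ii) immediately from Theorem~\ref{theorem:afbperm}, and then to derive part (iii) as a contrapositive of Lemma~\ref{lemma:transcf}.

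First, for (i) and (ii) I would simply invoke Theorem~\ref{theorem:afbperm}: it already gives $afb \in \Perm$ for every $f \in \Perm$ and $a, b \in F$, which is (i); and it gives $afb \in \Permcf$ for $f \in \Permcf$ together with an algorithm that, from $f$, deciders for its cycles and its cycle finiteness problem, and $\eta$-indices for $a$ and $b$, computes deciders for the cycles and the cycle finiteness problem of $afb$ --- which is exactly the meaning of ``constructively closed'' in (ii). No further argument is required for these two parts, as they are nothing but a restatement of the theorem.

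The substance is in (iii), and here I would argue as follows. By Lemma~\ref{lemma:infcycle} there is a $g \in \Perm \setminus \Permcf$; fix one such $g$. Now apply Lemma~\ref{lemma:transcf} to this $g$: it asserts the existence of a recursive $\rho\code{x,y}$ with $\varphi_{\rho\code{x,y}}$ deciding $\cycle{x,y}g$ for all $x, y$ \emph{if and only if} $\textsc{CF}(g)$ is decidable. Since $g \notin \Permcf$, the problem $\textsc{CF}(g)$ is undecidable, so no such $\rho$ exists --- which is precisely the claim of (iii).

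I do not expect a genuine obstacle: the real work sits in the two results being cited. Lemma~\ref{lemma:transcf} already carried out the case analysis (summarised in Figure~\ref{fig:transmult}) describing how a transposition changes the cycle structure, and Lemma~\ref{lemma:infcycle} already produced, via an encoding of the Halting Problem into cycle lengths, a member of $\Perm$ with undecidable cycle finiteness. The only point to be careful about is citing the ``$\Rightarrow$'' direction of Lemma~\ref{lemma:transcf} correctly: were a uniform $\rho$ to exist, one could --- non-uniformly in $g$, which is harmless for a mere existence statement --- extract from it a decider for $\textsc{CF}(g)$, contradicting $g \notin \Permcf$.
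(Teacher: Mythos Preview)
Your proposal is correct and follows essentially the same route as the paper: parts (i) and (ii) are read off directly from Theorem~\ref{theorem:afbperm}, and part (iii) is obtained by applying Lemma~\ref{lemma:transcf} (in its ``$\Rightarrow$'' direction, contrapositively) to the permutation $g \in \Perm \setminus \Permcf$ supplied by Lemma~\ref{lemma:infcycle}. Your additional remarks about where the real work lies and why the non-uniformity in $g$ is harmless are accurate and add nothing beyond what the cited results already establish.
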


The term ``constructively closed'' in the second part of the theorem means
that if $f \in \Permcf$ and $a, b \in F$, witnesses for the statement
$afb \in \Permcf$, i.e. the two deciders, can be computed uniformly in $f$,
$\eta$ indices for $a$ and $b$, and witnesses for $f \in \Permcf$. The
third part of the corollary then means that $\Perm$ is not constructively
closed: $agb$ is in $\Perm$, but a witness for this statement cannot be
computed uniformly in $\eta$ indices for $a$ and $b$, even when $g$ and a
decider for its cycles are fixed.

\begin{proof}
\begin{inparaenum}[(i)]
\item and \item are Theorem~\ref{theorem:afbperm}; \item is
Lemma~\ref{lemma:transcf} applied to the permutation from
Lemma~\ref{lemma:infcycle}.
\end{inparaenum}
\end{proof}

\nocite{*}
\bibliographystyle{alpha}
\bibliography{perm}

\begin{thebibliography}{BMMN98}

\bibitem[BMMN98]{bhatt98}
Meenaxi Bhattacharjee, Dugald Macpherson, R{\"o}gnvaldur~G. M{\"o}ller, and
  Peter~M. Neumann.
\newblock {\em Notes on Infinite Permutation Groups}.
\newblock Number 1698 in Lecture Notes in Mathematics. Springer, 1998.

\bibitem[Dav58]{davis58}
Martin Davis.
\newblock {\em Computability \& Unsolvability}.
\newblock Dover Books on Computer Science Series. Dover, 1958.

\bibitem[DM96]{dixonmortimer96}
John~D. Dixon and Brian Mortimer.
\newblock {\em Permutation Groups}.
\newblock Graduate Texts in Mathematics. Springer New York, 1996.

\bibitem[Hig90]{higman90}
Graham Higman.
\newblock Transversals and conjugacy in the group of recursive permutations.
\newblock In L.~G. Kov{\'a}cs, editor, {\em Groups---Canberra 1989: Australian
  National University Group Theory Program}, pages 142--160. Springer Berlin
  Heidelberg, 1990.

\bibitem[Ken62]{kent62}
Clement~F. Kent.
\newblock Constructive analogues of the group of permutations of the natural
  numbers.
\newblock {\em Transactions of the American Mathematical Society},
  104(2):347--362, 1962.

\bibitem[Leh09]{lehton09}
Eero Lehtonen.
\newblock An undecidable permutation of the natural numbers.
\newblock In Olivier Bournez and Igor Potapov, editors, {\em Reachability
  Problems}, volume 5797 of {\em Lecture Notes in Computer Science}, pages
  120--126. Springer Berlin Heidelberg, 2009.

\bibitem[LP98]{lewpap98}
Harry~R. Lewis and Christos~H. Papadimitriou.
\newblock {\em Elements of the Theory of Computation}.
\newblock Prentice Hall, 1998.

\bibitem[Myh59]{myhill59}
John Myhill.
\newblock Recursive digraphs, splinters and cylinders.
\newblock {\em Mathematische Annalen}, 138(3):211--218, 1959.

\bibitem[Rog58]{rogers58}
Hartley Rogers, Jr.
\newblock G{\"o}del numberings of partial recursive functions.
\newblock {\em The Journal of Symbolic Logic}, 23(3):pp. 331--341, 1958.

\bibitem[Rog87]{rogers87}
Hartley Rogers, Jr.
\newblock {\em Theory of Recursive Functions and Effective Computability}.
\newblock MIT Press, 1987.

\bibitem[Sch16]{schroeder16}
Bernd Schr{\"o}der.
\newblock {\em Ordered Sets}.
\newblock Birkh{\"a}user Basel, second edition, 2016.

\bibitem[vL15]{leeuwen15}
Jan van Leeuwen.
\newblock A note on recursively enumerable classes of partial recursive
  functions.
\newblock In {\em Technical Report Series, issue UU-CS-2015-001}. UU BETA ICS
  Departement Informatica, 2015.

\end{thebibliography}
\end{document}